\documentclass[11pt,reqno]{amsart}
\usepackage{amsthm,amssymb,amsmath}
\usepackage[pagewise]{lineno}
\usepackage{fix-cm}
\usepackage[T1]{fontenc}
\usepackage{hyperref}

\textwidth 162mm
\oddsidemargin 2mm
\evensidemargin 2mm
\textheight 220mm
\topmargin -3mm

\newtheorem{theorem}{Theorem}[section]
\newtheorem*{theorem*}{Theorem}
\newtheorem{lemma}{Lemma}[section]
\newtheorem{corollary}{Corollary}[section]
\newtheorem{proposition}{Proposition}[section]

\theoremstyle{definition}

\newtheorem{definition}{\bf Definition}
\newtheorem*{definition*}{\bf Definition}

\newtheorem{remark}{\sc Remark}
\newtheorem*{remark*}{\sc Remark}
\newtheorem*{remarks}{\sc Remarks}

\newtheorem*{example*}{\bf Example}

\newcommand{\loc}{{\rm loc}}

\newcommand{\sprt}{{\rm sprt\,}}

%
%

\expandafter\def\expandafter\normalsize\expandafter{%
    \normalsize
    \setlength\abovedisplayshortskip{8pt}
    \setlength\belowdisplayshortskip{8pt}
}

\begin{document}

\fontsize{10.4pt}{4.5mm}\selectfont

\title{Feller generators with singular drifts in the critical range}

\author{D.\,Kinzebulatov}

\address{Universit\'{e} Laval, D\'{e}partement de math\'{e}matiques et de statistique, Qu\'{e}bec, QC, Canada}

\email{damir.kinzebulatov@mat.ulaval.ca}

\thanks{
The research of D.K. is supported by NSERC Discovery grant (RGPIN-2024-04236)}

\author{Yu.\,A.\,Sem\"{e}nov}

\address{University of Toronto, Department of Mathematics, Toronto, ON, Canada}

\email{semenov.yu.a@gmail.com}

\keywords{Singular drifts, Feller semigroups, form-bounded vector fields, Orlicz spaces, stochastic differential equations}

\subjclass[2020]{60G53, 60H10 (primary), 47D07 (secondary)}

\begin{abstract}
We consider diffusion operator $-\Delta + b \cdot \nabla$ in $\mathbb R^d$, $d \geq 3$, with drift $b$ in a large class of locally unbounded vector fields that can have critical-order singularities. Covering the entire range of admissible magnitudes of singularities of $b$, we construct a strongly continuous Feller semigroup on the space of continuous functions vanishing at infinity, thus completing a number of results on well-posedness of SDEs with singular drifts. Our approach uses De Giorgi's method ran in $L^p$ for $p$ sufficiently large, hence the gain in the assumptions on singular drift.

For the critical borderline value of the magnitude of singularities of $b$, we construct a strongly continuous semigroup in a ``critical'' Orlicz space on $\mathbb R^d$ whose topology is stronger than the topology of $L^p$ for any $2 \leq p<\infty$ but is slightly weaker than that of $L^\infty$. 
\end{abstract}

\maketitle

\section{Introduction}

\textbf{1.~}The paper concerns with the following question: what are the minimal assumptions on a locally unbounded vector field $b:\mathbb R^d \rightarrow \mathbb R^d$, $d \geq 3$, such that operator $-\Delta + b \cdot \nabla$ generates a strongly continuous Feller semigroup? We deal with the drift singularities that substantially affect the behaviour of the heat kernel of $-\Delta + b\cdot \nabla$. For instance, the heat kernel can vanish or blow up  at some points in space. However, the Feller semigroup structure ensures that the corresponding strong Markov process exists and has a number of important properties that make it of practical interest (e.g.\,properties related to continuity, existence of invariant measure, solvability of a martingale problem \cite{BG,DZ}). It is almost impossible to survey the literature on Feller generators. We only mention some results related to the diffusion operators with irregular drifts, including drifts having strong growth at infinity \cite{MPW, PRS}, generators of distorted Brownian motion \cite{AKR,BGr2, BGr}, general locally unbounded drifts $b$ \cite{BC,Ki2,KS,Kr}. See also \cite{BCR, S, SW}.

\medskip

The question of what local singularities of drift $b$ are admissible has two dimensions: the order of  singularities (for example, for the model  singular drift $b(x)=\sqrt{\delta}\frac{d-2}{2}|x|^{-\alpha}x$ the order of singularities is determined by $\alpha-1>0$) and their magnitude (i.e.\,factor $\delta$ in the previous formula if $\alpha$ is chosen to be critical, which, as can be seen by rescaling the equation, is $\alpha=2$). The following is a large class of vector fields that can have critical-order singularities:

\begin{definition}
\label{def1}
A Borel measurable vector field $b:\mathbb R^d \rightarrow \mathbb R^d$ is said to be form-bounded if 
\begin{equation}
\label{fbd}
\|b\varphi\|_2^2 \leq \delta \|\nabla \varphi\|_2^2+c(\delta)\|\varphi\|_2^2 \quad \forall\,\varphi \in W^{1,2}
\end{equation}
for some constants $\delta$ and $c(\delta)$ (here and in what follows, $\|\cdot\|_p:=\|\cdot\|_{L^p}$, $W^{1,2}$ is the Sobolev space of functions with square integrable derivatives). Condition \eqref{fbd} is written as $b \in \mathbf{F}_\delta$. 
\end{definition}

The form-boundedness with form-bound $\delta<1$ is a classical condition on $|b|$: it provides coercivity of the corresponding to $-\Delta + b \cdot \nabla$ quadratic form in $L^2$.

\medskip

Constant $\delta$ measures the magnitude of singularities of the vector field $b$.
  If $\delta>4$, then there are various counterexamples to the regularity theory of $-\Delta + b \cdot \nabla$ and to the theory of the corresponding diffusion process. We explain below that the critical threshold value of $\delta$ is $4$.
The present paper concerns with the value of $\delta$ going up to (and including) $\delta=4$.

\medskip

\textbf{2.}~There is a plethora of results devoted to verifying inclusion $b \in \mathbf{F}_\delta$ \cite{A, CWW, CF,  F}. Here are some examples of sub-classes of $\mathbf{F}_\delta$ that appear in the literature on PDEs and stochastic differential equations (SDEs).
For example, class $\mathbf{F}_\delta$ contains vector fields $b$ from $[L^d+L^\infty]^d$ (with $\delta$ that can be chosen arbitrarily small), weak $L^d$ class, which includes
\begin{equation}
\label{hardy}
b(x)=\pm \sqrt{\delta}\frac{d-2}{2}|x|^{-2}x \in \mathbf{F}_\delta \quad (\text{but not in any }\mathbf{F}_{\delta'} \text{ with } \delta'<\delta),
\end{equation}
and, more generally, the scaling-invariant Morrey class 
\begin{equation*}
\|b\|_{M_{2+\varepsilon}}:=\sup_{r>0, x \in \mathbb R^d} r\biggl(\frac{1}{|B_r|}\int_{B_r(x)}|b|^{2+\varepsilon}dx \biggr)^{\frac{1}{2+\varepsilon}}<\infty
\end{equation*}
where $B_r(x)$ is the ball of radius $r$ centered at $x$, and $\varepsilon>0$ is fixed arbitrarily small, so $\delta=C\|b\|_{M_{2+\varepsilon}}$ for appropriate constant $C=C(\varepsilon)$ \cite{F}. 
Some other examples can be found, in particular, in \cite{Ki_survey, KiS_theory}.

\medskip

\medskip

\textbf{3.}~It was proved in \cite{KiS_sharp}, using De Giorgi's iterations in $L^p$, $p>\frac{2}{2-\sqrt{\delta}}$, and a compactness argument, that if $b \in \mathbf F_\delta$ with $\delta<4$, then the corresponding to $-\Delta + b \cdot\nabla$ SDE
\begin{equation}
\label{sde}
X_t= x - \sqrt{\delta} \frac{d-2}{2}\int_0^t b(X_s)ds + \sqrt{2}B_t,
\end{equation}
where $B_t$ is the $d$-dimensional Brownian motion,
has a martingale solution for every initial point $x \in \mathbb R^d$ (see Theorem \ref{thm_sharp} below). This is important in light of the following counterexample: if we take a particular form-bounded singular vector field $b(x)=\sqrt{\delta}\frac{d-2}{2}\mathbf{1}_{x \neq 0}|x|^{-2}x$ introducing strong attraction of $X_t$ to the origin, then, whenever
$$
\delta>4\bigg(\frac{d}{d-2}\bigg)^2,
$$
the corresponding SDE does not have a weak solution departing from $x=0$. 
Thus, the constraint $\delta<4$ in \cite{KiS_sharp} is sharp at least asymptotically (i.e.\,in high dimensions). 
It should also be added that if $\delta>4$, then for every initial point $x \neq 0$ the corresponding solution of \eqref{sde} (which, one can prove, still exists locally in time) arrives to the origin with positive probability.

\medskip

We explain where does the condition $p>\frac{2}{2-\sqrt{\delta}}$ come from in the end of this introduction. Let us add that it was known for some time that $-\Delta + b\cdot \nabla$, $b \in \mathbf{F}_\delta$, $\delta<4$, generates a strongly continuous semigroup in $L^p$, $p>\frac{2}{2-\sqrt{\delta}}$ \cite{KS}. Although this semigroup is an $L^\infty$ contraction and $p$ can be taken arbitrarily large, this result on its own does not provide a path to constructing strongly continuous Feller semigroup. 

\medskip

There already exist various methods for constructing Feller semigroup for $-\Delta + b \cdot \nabla$ with $
b \in \mathbf{F}_\delta$ with some small $\delta$. The first paper where such construction was carried out for $\delta<1 \wedge (\frac{2}{d-2})^2$, using Moser-type iterations, was \cite{KS}. 
\cite{Ki2} gave a different approach via fractional resolvent representations in $L^q$, $q>d-2$, to constructing the Feller generator, reaching the same condition on $\delta$ as in \cite{KS}, and also providing additional information about regularity of the Feller semigroup, cf.\,Theorem \ref{thm1}(\textit{v}).
All these results require $\delta \ll 1$. The reasons for this is that the argument  in \cite{KS} uses rather strong gradient bound on solutions of the corresponding elliptic equations, while the construction in \cite{Ki2} automatically provides such gradient bounds, so the Feller semigroup arises as a by-product of this construction (a more detailed discussion can be found in survey \cite{Ki_survey}).

\medskip

The question of what happens with operator $-\Delta + b \cdot \nabla$ and the corresponding parabolic equation  in the critical case $\delta=4$ was addressed in \cite{Ki_Orlicz}.  It turned out one still has a strongly continuous Markov semigroup but in Orlicz space with gauge function $\cosh-1$, moreover, the corresponding elliptic equation has a unique weak solution, and a variant of energy inequality holds. The local topology of this Orlicz space is stronger than the local topology of $L^p$ with any finite $p$, but is weaker than the topology of $L^\infty$ (\cite{Ki_Orlicz} dealt with the dynamics on the torus $\mathbb R^d/\mathbb Z^d$ or, rather on a compact Riemannian manifold).
The result of \cite{Ki_Orlicz} was summarized there as follows: strengthening the topology of the space,where the semigroup of $-\Delta + b \cdot \nabla $ is considered, allows to relax the assumptions on $\delta$\footnote{Retrospectively, condition $p>\frac{2}{2-\sqrt{\delta}}$ could be interpreted as saying the same thing, but, since semigroup $e^{t(\Delta - b \cdot \nabla)}$ in $L^p$ is automatically strongly continuous in all $L^q$, $p<q<\infty$ by the interpolation with the $L^\infty$ contraction estimate, the link between the strength of topology and the value of $\delta$ was somewhat less transparent in the $L^p$ setting.}. 
In the same vein, the Feller semigroup for $-\Delta + b \cdot \nabla $, which is acting in a space with an even stronger local topology (i.e.\,space $C_\infty$ of continuous functions vanishing at infinity with  the $\sup$-norm), should be defined for all values of $\delta$ going up to $4$. Below we show that this is indeed the case for all $\delta<4$.

\medskip

Our main results in this paper, stated briefly, are as follows.

\begin{theorem*} Let $b \in \mathbf{F}_\delta$. The following are true:

\smallskip

Theorem \ref{thm1}: If $\delta<4$, then the constructed in \cite{KiS_sharp} probability measures $\{\mathbb P_x\}_{x \in \mathbb R^d}$ solving the martingale problem for \eqref{sde}
in fact determine a Feller semigroup. Its generator is an appropriate realization of formal operator $-\Delta + b \cdot \nabla$ in $C_\infty$. This Feller semigroup is unique among Feller semigroups that can be constructed via an approximation of $b$ by bounded smooth vector fields that do not increase form-bound $\delta$ and constant $c(\delta)$.

\smallskip

Theorem \ref{thm2}: If $\delta \leq 4$ and $b$ satisfies some additional constraints on its behaviour outside of a large ball (e.g.\,bounded), then there is an analogous semigroup theory of $-\Delta + b \cdot \nabla$ but in the Orlicz space with gauge function $\cosh-1$ on $\mathbb R^d$.
\end{theorem*}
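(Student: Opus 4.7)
The plan is to construct both semigroups as limits of classical Feller semigroups $T_t^n$ associated with smooth bounded approximations $b_n$ of $b$, where the $b_n$ are chosen (e.g.\ by mollifying $b \mathbf{1}_{|b|\leq n}$) so that $b_n \in \mathbf{F}_\delta$ with the same constant $c(\delta)$. For each such $b_n$, $T_t^n$ exists on $C_\infty$ by standard theory and is simultaneously a strongly continuous, $L^\infty$-contractive semigroup on every $L^p$. The issue is therefore to obtain uniform-in-$n$ estimates strong enough to pass to the limit in the correct topology.

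For Theorem \ref{thm1}, the key step is to run De Giorgi's iteration from \cite{KiS_sharp} in $L^p$ for some fixed $p > \frac{2}{2-\sqrt{\delta}}$: with $\delta<4$ such a $p$ exists, and form-boundedness provides precisely the $L^p$-coercivity needed to close the iteration. From this I would extract, uniformly in $n$ and applied to $u_n(t,\cdot) := T_t^n f$ with $f \in C_c^\infty$, (i) a local $L^p \to L^\infty$ bound, (ii) an interior modulus of continuity (Hölder type), and (iii) an equi-decay estimate at spatial infinity, obtained by applying the same iteration to $u_n$ on the complement of large balls after cutoff. Combined with the $L^\infty$-contraction, Arzelà-Ascoli then yields a subsequential limit $T_t f \in C_\infty$; the semigroup property, strong continuity, and Feller property follow routinely. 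To identify the limit with the martingale-problem family $\{\mathbb{P}_x\}$ of \cite{KiS_sharp}, apply Itô's formula to $u_n$ along the approximating SDE and pass to the limit using the tightness already proved there; this shows $T_t f(x) = \mathbb{E}_x[f(X_t)]$, fixing the limit and yielding uniqueness within the class of approximation-compatible Feller extensions.

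For Theorem \ref{thm2}, the $L^p$ scheme degenerates as $\delta \uparrow 4$ (the admissible $p$ blows up), so one should work directly in the Orlicz space with gauge $\Phi(r) = \cosh r - 1$, whose Luxemburg norm behaves like a ``limit'' of the $L^p$-norms. The analogue of the energy inequality is obtained by testing the equation for $u_n$ against $\Phi'(u_n) = \sinh u_n$: using $\Phi'' = \Phi + 1$ and the form-bound with constant $\delta \leq 4$ applied to an appropriate substitution of $\cosh$-type in \eqref{fbd}, the drift contribution is absorbed into the nonlinear Dirichlet form, and the combinatorial constants close precisely at the threshold $\delta=4$. The extra hypothesis on $b$ outside a large ball is what makes the Orlicz estimate globalize to $\mathbb{R}^d$ (a step that is trivial on the torus of \cite{Ki_Orlicz}). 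Passage $n \to \infty$ then proceeds by the same compactness scheme as above, but now within the Orlicz space.

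I expect the main obstacle in Theorem \ref{thm1} to be item (ii): at the critical scaling, the drift enters De Giorgi's energy estimate exactly at the level controlled by the form-bound, and only the strict inequality $p > \frac{2}{2-\sqrt{\delta}}$ leaves enough slack to upgrade local $L^\infty$-bounds to a uniform modulus of continuity transferable to $C_\infty$. In Theorem \ref{thm2}, the corresponding obstacle is bookkeeping the constants in the $\cosh-1$ energy inequality at $\delta=4$ so that the drift is absorbed by the nonlinear Dirichlet term rather than by a coefficient blowing up as $\delta \uparrow 4$, together with the non-compactness at infinity.
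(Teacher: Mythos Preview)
Your overall architecture---approximate by smooth $b_n \in \mathbf{F}_\delta$ with the same constants, prove uniform-in-$n$ regularity via De~Giorgi in $L^p$ with $p > \frac{2}{2-\sqrt{\delta}}$, and pass to the limit---matches the paper's. The differences are structural rather than conceptual, but they affect what you actually have to prove.

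For Theorem~\ref{thm1} you work with the \emph{parabolic} family $u_n(t) = T_t^n f$ and propose Arzel\`a--Ascoli directly on the semigroups. The paper instead works with the \emph{elliptic} resolvents $(\mu + \Lambda_n)^{-1}$ and verifies the three hypotheses of Trotter's approximation theorem in $C_\infty$; the De~Giorgi machinery (H\"older continuity plus a weighted $L^p \to L^\infty$ bound with weight $\rho_x(y) = (1+\kappa|y-x|^2)^{-d/2-1}$, which gives the equi-decay at infinity) is applied to the non-homogeneous elliptic equation $(\mu+\Lambda_n)u=f$. Going through Trotter buys you strong continuity of the limit semigroup for free once the resolvents converge; in your parabolic scheme you would need a separate uniform-in-$n$ argument for $\|T_t^n f - f\|_\infty \to 0$ as $t \downarrow 0$, which does not fall out of parabolic De~Giorgi and is in fact the nontrivial Trotter condition, handled in the paper via the resolvent identity and a lemma from \cite{KS,KiS_theory}. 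Subsequential limits are identified not by It\^o's formula against the martingale problem but by the already-known strong $L^p$-convergence of the resolvents from \cite{KS}.

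For Theorem~\ref{thm2} your testing idea is exactly what is done: multiply by $\sinh v$ (with $v = e^{-\lambda t}u_n$), use the identity $\langle -\Delta v, e^v - e^{-v}\rangle = 4\|\nabla(e^{v/2}+e^{-v/2})\|_2^2 + 2\|\nabla v\|_2^2$, and absorb the drift into the first term precisely at $\delta = 4$. The compact-support hypothesis enters through a cutoff $\zeta_R$ that lets one apply the form-bound to $\zeta_R \cosh(v/2)$ and converts the remainder into a bounded inhomogeneity (this is what replaces the finite torus volume from \cite{Ki_Orlicz}). However, the paper does \emph{not} pass to the limit by compactness: it shows directly that $\{T_n^t f\}$ is Cauchy in $L^\infty([0,T], L_\Phi)$ and in $L^2([0,T],W^{1,2})$ by writing the equation for $h = c^{-1}(v_n - v_k)$ and repeating the same $\sinh$-energy computation. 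Your compactness route would need an Arzel\`a--Ascoli-type criterion in $L_\Phi(\mathbb{R}^d)$, which is delicate; the Cauchy argument is both simpler and is precisely what delivers the approximation-uniqueness assertion.
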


The proof of Theorem \ref{thm1} uses, in particular, some regularity results  for non-homogeneous elliptic equations obtained in \cite{Ki_multi}  by means of De Giorgi's method ran in $L^p$, and some convergence theorems obtained in \cite{KS}. This allows to verify conditions of the Trotter approximation theorem in $C_\infty$.

\medskip

Theorem \ref{thm2} is proved directly, by verifying Cauchy's criterion for solutions of the approximating parabolic equations. Let us add that in \cite{Ki_Orlicz} the volume of the torus enters the estimates, so simply blowing it up, in order to work on $\mathbb R^d$, is not an option. 
We address this in the present paper (Theorem \ref{thm2}) by working carefully  with appropriate weights.

\medskip

Theorem \ref{thm2} admits more or less direct extension to time-inhomogeneous form-bounded vector fields. On the other hand, the proof of Theorem \ref{thm1} so far uses in an essential manner (via Trotter's approximation theorem) the fact that we are working with elliptic equations, so it is limited to time-homogeneous $b=b(x)$.

\medskip

The literature on the regularity theory of diffusion operator $-\Delta + b \cdot \nabla$ and on the corresponding SDE also deals with larger classes of singular vector fields $b$, i.e.\,those that contain $\mathbf{F}_\delta$, such as the class of weakly form-bounded vector fields \cite{Ki_super,KiS_brownian} or (basically the largest possible scaling-invariant time-inhomogeneous) Morrey class \cite{Ki_Morrey}. However, in the cited papers it is essential that the form-bound $\delta$ is smaller than a dimension-dependent constant $\ll 1$, and it is not yet clear what is the critical value of $\delta$ for these classes of vector fields. There is also the Kato class of vector fields that contains drifts having strong hypersurface singularities, see e.g.\,\cite{BC}, but, on the other hand, the Kato class does not even contain $|b| \in L^d$ and itself is contained in the class of  weakly form-bounded vector fields.

\medskip

\textbf{4.~}As was mentioned above, if $b \in \mathbf{F}_\delta$, $\delta<4$, then one can construct a quasi contraction strongly continuous Markov semigroup $e^{-t\Lambda_p}$ in $L^p$, $\Lambda_p \supset -\Delta + b \cdot \nabla$, $p \in ]\frac{2}{2-\sqrt{\delta}},\infty[$. 
We proved in \cite{KiS_theory} that the last statement remains valid for all $p$ in a larger interval $$I_c:=[\frac{2}{2-\sqrt{\delta}},\infty[ \quad \text{(``interval of quasi contractive solvability'')},$$ moreover, the corresponding semigroup inherits many important properties of the heat semigroup $e^{t\Delta}$ such as $L^p \rightarrow L^q$ bounds and holomorphy.
The interval of quasi contractive solvability $I_c$ can be further extended to the interval of quasi bounded solvability $$I_m:=]\frac{2}{2-\frac{d-2}{d}\sqrt{\delta}},\infty[,$$ i.e.\,for all $p \in I_m$ one still has a strongly continuous semigroup $e^{-t\Lambda_p}$, $\Lambda_p \supset -\Delta + b \cdot \nabla$, but now it satisfies a weaker bound
$$
\|e^{-t\Lambda_p}\|_p \leq M_{p,\delta} e^{\lambda_{p,\delta} t}\|f\|_p \quad \text{ for some } M_{p,\delta}>1.
$$
The interval of quasi bounded solvability $I_m$ is sharp. See \cite{KiS_theory}. We note that if $\delta \uparrow 4$, then, while the interval of quasi contractive solvability $I_c$ tends to the empty set, the interval of quasi bounded solvability $I_m$ tends to a non-empty interval $]\frac{d}{2},\infty[$. That said, as $\delta \uparrow 4$, one has $M_{p,\delta} \uparrow \infty$, so this result still does not allow to include $\delta=4$.

\medskip

Where does the condition $\delta<4$, $p\in I_c$, come from can be seen from the following elementary calculation. Let $b\in\mathbf F_\delta$ be additionally bounded and smooth. Consider Cauchy problem
 $(\partial_t - \Delta +b \cdot \nabla)u=0$, $u(0)=f \in C_c^\infty$. Without loss of generality, $f \geq 0$, and so $u \geq 0$. Set $v=e^{-\lambda t}u$, $\lambda>0$. Multiply equation  $(\lambda + \partial_t - \Delta +b \cdot \nabla)v=0$ by $v^{p-1}$ and integrate by parts: 
$$
\lambda \langle v^p\rangle + \frac{1}{p}\langle \partial_t v^p\rangle + \frac{4(p-1)}{p^2}\langle |\nabla v^{\frac{p}{2}}|^2 \rangle + \frac{2}{p}\langle b \cdot \nabla v^{\frac{p}{2}},v^{\frac{p}{2}}\rangle=0,
$$
($\langle\cdot\rangle$ denotes the integration over $\mathbb R^d$, $\langle\cdot,\cdot \rangle$ is the inner product in $L^2$ over reals).

Applying quadratic inequality in the last term, we arrive at
$$
p\lambda \langle v^p\rangle + \langle \partial_t v^p\rangle + \frac{4(p-1)}{p}\langle |\nabla v^{\frac{p}{2}}|^2 \rangle \leq \alpha \langle |b|^2,v^p \rangle + \frac{1}{\alpha} \langle |\nabla v^{\frac{p}{2}}|^2 \rangle
$$
Now, applying $b \in \mathbf{F}_\delta$ and selecting $\alpha=\frac{1}{\sqrt{\delta}}$, we obtain
\begin{equation*}
\biggl[p\lambda  - \frac{c(\delta)}{\sqrt{\delta}}\biggr]\langle v^p\rangle + \langle \partial_t v^p\rangle + \biggl[\frac{4(p-1)}{p}-2\sqrt{\delta} \biggr]\langle |\nabla v^{\frac{p}{2}}|^2 \rangle \leq 0, \quad \lambda \geq \frac{c(\delta)}{p\sqrt{\delta}}.
\end{equation*}
In order to keep the dispersion term non-negative, one needs $\frac{4(p-1)}{p}-2\sqrt{\delta}\geq 0$, i.e.\,$\delta<4$ and $p \in I_c$, which then yields $\|u\|_p\leq e^{\frac{c(\delta)t}{p\sqrt{\delta}}}\|f\|_p$.

\subsection*{Notations}  $B_r(x)$ denotes the open ball of radius $r$ centered at $x \in \mathbb R^d$, $B_r:=B_r(0)$.

Let $\mathcal B(X,Y)$ denote the space of bounded linear operators between Banach spaces $X \rightarrow Y$, endowed with the operator norm $\|\cdot\|_{X \rightarrow Y}$. $\mathcal B(X):=\mathcal B(X,X)$. 

The space of $d$-dimensional vectors with entries in $X$ is denoted by $[X]^d$.

We write $T=s\mbox{-} Y \mbox{-}\lim_n T_n$ for $T$, $T_n \in \mathcal B(X,Y)$ if $$\lim_n\|Tf- T_nf\|_Y=0 \quad \text{ for every $f \in X$}.
$$ 

Put $L^p=L^p(\mathbb R^d)$, $W^{1,p}=W^{1,p}(\mathbb R^d).$
Set $\|\cdot\|_p:=\|\cdot\|_{L^p}$
and
$
\|\cdot\|_{p \rightarrow q}:=\|\cdot\|_{L^p \rightarrow L^q}.
$

Put
$$
\langle f,g\rangle = \langle f g\rangle :=\int_{\mathbb R^d}f g dx$$ 
(all functions considered in this paper are real-valued).

$C_c$ ($C_c^\infty$) denotes the space of continuous (smooth) functions on $\mathbb R^d$ having compact support.

$C_\infty:=\{f \in C(\mathbb R^d) \mid \lim_{x \rightarrow \infty}f(x)=0\}$ endowed with the $\sup$-norm.

Set
$$
\gamma(x):=\left\{
\begin{array}{ll}
c\exp\left(\frac{1}{|x|^2-1}\right)& \text{ if } |x|<1, \\
0, & \text{ if } |x| \geqslant 1,
\end{array}
\right.
$$
where $c$ is adjusted to $\int_{\mathbb R^d} \gamma(x)dx=1$, and put $\gamma_\varepsilon(x):=\frac{1}{\varepsilon^{d}}\gamma\left(\frac{x}{\varepsilon}\right)$, $\varepsilon>0$, $x\in \mathbb R^d$.
Define the Friedrichs mollifier of a function $h \in L^1_{\loc}$ (or a vector field with entries in $L^1_{\loc}$) by $$E_\varepsilon h:=\gamma_\varepsilon \ast h.$$

\bigskip

\section{Feller semigroup in regime $\delta<4$}

For a given $b \in \mathbf{F}_\delta$, define $b_n:=E_{\varepsilon_n} b$ ($\varepsilon_n \downarrow 0$), where $E_\varepsilon$ is the Friedrichs mollifier. Then $b_n$ are bounded, smooth, converge to $b$ component-wise locally in $L^2_{\loc}$, and do not increase the form-bound $\delta$ and constant $c(\delta)$ of $b$, i.e.
$$
\|b_n\varphi\|_2^2 \leq \delta \|\nabla \varphi\|_2^2+c(\delta)\|\varphi\|_2^2 \quad \forall\,\varphi \in W^{1,2}
$$
(see e.g.\,\cite{KiS_MAAN} for the proof). 
By the classical theory, for every $n \geq 1$, Cauchy problem
$$
(\partial_t+\Lambda_n)u_n=0, \quad u_n(0)=f \in C_\infty,
$$
$$
\text{where } \Lambda_n:=-\Delta + b_n \cdot \nabla, \quad D(\Lambda_n):=(1-\Delta)^{-1} C_\infty,
$$
has unique solution $u_n(t,x)=:e^{-t\Lambda_n}f(x)$, and $e^{-t\Lambda_n}$ is a strongly continuous Feller semigroup on $C_\infty$.

\medskip

Put $\rho_x(y):=\rho(y-x)$, $
\rho(y)=(1+\kappa|y|^{2})^{-\frac{d}{2}-1}$, $y \in \mathbb R^d$.

\begin{theorem}[1st Main Result]
\label{thm1}
Let $b \in \mathbf{F}_\delta$ with $\delta<4$. Then

{\rm (\textit{i})} The limit
$$
s\mbox{-}C_\infty\mbox{-}\lim_{n}e^{-t\Lambda_n} \quad (\text{loc.\,uniformly in $t \geq 0$})
$$
exists and is a strongly continuous Feller semigroup on $C_\infty$, say, $e^{-t\Lambda}$. Its generator $\Lambda$ is an appropriate operator realization of the formal operator $-\Delta + b \cdot \nabla$ in $C_\infty$ (in general, no longer an algebraic sum of $-\Delta$ and $b \cdot \nabla$, see remark after the theorem regarding domain $D(\Lambda)$).

\smallskip

{\rm (\textit{ii})}  Feller semigroup $e^{-t\Lambda}$ is unique in the sense of approximations, i.e.\,does not depend on the choice of a bounded smooth approximation $b_n$ of $b$ in (\textit{i}), as long as $b_n$ converge to $b$ in $[L^2_{\loc}]^d$ and do not increase the form-bound $\delta$ of $b$ and constant $c(\delta)$.

\smallskip

{\rm (\textit{iii})} Strong Feller property for resolvent:
\begin{align}
\tag{\cite{Ki_multi}}
\|(\mu+\Lambda)^{-1}f\|_{C_\infty}  \leq K \sup_{x \in \frac{1}{2}\mathbb Z^d}\biggl((\mu-\mu_1)^{-\frac{1}{p\theta}} \langle |f|^{p\theta}\rho_x\rangle^{\frac{1}{p\theta}} \notag  + \mu^{-\frac{\beta}{p}}\langle |f|^{p\theta'}\rho_x\rangle^{\frac{1}{p\theta'}} \biggr), \quad f \in L^{p\theta} \cap L^{p\theta'}
\end{align}
for fixed $1<\theta<\frac{d}{d-2}$ and $p \geq 2$ such that $p>\frac{2}{2-\sqrt{\delta}}$, for all $\mu$ strictly greater than certain $\mu_1$. In particular, taking into account that $\langle \rho_x \rangle<\infty$, we have, appealing to the Dominated convergence theorem,
$$
\|(\mu+\Lambda)^{-1}f\|_{C_\infty} \leq C \|f\|_\infty, \quad f \in L^\infty.
$$

\smallskip

{\rm (\textit{iv})} For all $\frac{2}{2-\sqrt{\delta}} \leq p \leq q < \infty$, 
\begin{equation}
\tag{\cite{KiS_theory,S}}
\|e^{-t\Lambda_p}\|_{p \rightarrow q} \leq C_{\delta,d}e^{\omega_{p} t}t^{-\frac{d}{2}(\frac{1}{p}-\frac{1}{q})}, \quad \omega_p=\frac{c(\delta)}{2(p-1)}.
\end{equation}

\smallskip

{\rm (\textit{v})} If additionally $\delta<\frac{4}{(d-2)^2} \wedge 1$, then the resolvent $u=(\mu+\Lambda)^{-1}f$ satisfies, for every $q \in [2,\frac{2}{\sqrt{\delta}}[$,
\begin{equation}
\label{i}
\tag{\cite{KS}}
\|\nabla u\|_q\leq K_1(\mu-\mu_0)^{-\frac{1}{2}}\|f\|_q,\quad \|\nabla |\nabla u|^{\frac{q}{2}} \|_2  \leq K_2(\mu-\mu_0)^{-\frac{1}{2}+\frac{1}{q}}\|f\|_q, 
\end{equation}
and
\begin{equation}
\label{ii}
\tag{\cite{Ki2}}
\|(\mu-\Delta)^{\frac{1}{2}+\frac{1}{s}}u\|_q \leq K\|(\mu-\Delta)^{-\frac{1}{2}+\frac{1}{r}}f\|_q, \quad \text{ for all } 2 \leq r<q<s
\end{equation}
for all $\mu$ greater than some generic $\mu_0$. In particular, we can select $q>d-2$ (and, in the second assertion, $s$ close to $q$) so that, by the Sobolev embedding theorem, the elements on the domain $D(\Lambda)$ are H\"{o}lder continuous. 
\end{theorem}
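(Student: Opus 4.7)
The plan is to apply the Trotter--Kato approximation theorem in $C_\infty$. Each $e^{-t\Lambda_n}$ is already a strongly continuous Feller semigroup on $C_\infty$ by the classical theory, so it suffices to exhibit some $\mu_0>0$, a uniform resolvent bound $\sup_n\|(\mu+\Lambda_n)^{-1}\|_{C_\infty\to C_\infty}<\infty$ for $\mu>\mu_0$, and a candidate limit $R_\mu f := s\mbox{-}C_\infty\mbox{-}\lim_n (\mu+\Lambda_n)^{-1}f$ defined on a dense subset of $C_\infty$ and having dense range. Trotter--Kato then produces a generator $\Lambda$ with $(\mu+\Lambda)^{-1}=R_\mu$ and strong, locally-uniform-in-$t$ convergence $e^{-t\Lambda_n}\to e^{-t\Lambda}$, which is exactly assertion (\textit{i}).

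The central analytic input is the De Giorgi iteration for non-homogeneous elliptic equations from \cite{Ki_multi}, run in $L^p$ with $p>\frac{2}{2-\sqrt{\delta}}$. Applied to $u_n := (\mu+\Lambda_n)^{-1}f$ with $f\in C_c^\infty$ and $1<\theta<\frac{d}{d-2}$, it yields a local $L^\infty$ estimate over balls $B_{1/2}(x)$ of precisely the form stated in (\textit{iii}), with constants depending only on $\delta,c(\delta),d,p,\theta$ and hence independent of $n$. Taking the supremum over $x\in\tfrac{1}{2}\mathbb Z^d$ gives (\textit{iii}) directly; combining it with the trivial bound $\langle|f|^{p\theta}\rho_x\rangle\le\|f\|_\infty^{p\theta}\langle\rho\rangle$ (and its $\theta'$ analogue), where $\langle\rho\rangle<\infty$ is independent of $x$, delivers the uniform $C_\infty\to C_\infty$ resolvent bound required by Trotter--Kato. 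The polynomial weight $\rho_x$ is essential here: it allows the weighted $L^{p\theta}$ integrals to be finite for bounded $f$ while still decaying as $|x|\to\infty$.

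For the convergence $(\mu+\Lambda_n)^{-1}f\to R_\mu f$ in $C_\infty$, I would combine the De Giorgi estimate above with the $L^p$-convergence machinery of \cite{KS,KiS_theory}. For $p\in I_c$ the resolvents $u_n$ are Cauchy in $L^p$ and the gradients $\nabla u_n$ are Cauchy in a suitable weighted $L^r$ norm, both with constants controlled by the common form-bound of the $b_n$. Writing the equation satisfied by $u_n-u_m$ with right-hand side $(b_m-b_n)\cdot\nabla u_m$ and applying the De Giorgi bound, one estimates this right-hand side via the form-boundedness of $b_m-b_n$ (with coefficients $2\delta,\,2c(\delta)$), upgrading the weighted $L^{p\theta}$-Cauchy property to a $C_\infty$-Cauchy property. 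Vanishing of the limit at infinity is automatic, as $\langle|f|^{p\theta}\rho_x\rangle$ and $\langle|f|^{p\theta'}\rho_x\rangle$ both tend to $0$ as $|x|\to\infty$ for $f\in C_\infty$. Density of $\mathrm{range}(R_\mu)$ in $C_\infty$ follows by passing to the distributional limit in $(\mu+\Lambda_n)u_n=f$ along $f\in C_c^\infty$.

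The main obstacle I foresee is making the weighted convergence $(b_m-b_n)\cdot\nabla u_m\to 0$ fully uniform in the lattice center $x\in\tfrac12\mathbb Z^d$, given that $b_n\to b$ only in $[L^2_{\loc}]^d$; the resolution uses the translation-invariant polynomial decay of $\rho_x$ combined with the uniform form-bound $\delta$ of the entire family $\{b_n\}$. Once (\textit{i}) is established, (\textit{ii}) is immediate: a second admissible approximation falls within the same $L^p$-framework of \cite{KiS_theory,KS} with identical $\delta$ and $c(\delta)$ and so produces the same $L^p$-resolvent, which in turn determines the $C_\infty$-resolvent on $C_c^\infty$ by the identification just made; density of $C_c^\infty$ in $C_\infty$ finishes the argument. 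Assertions (\textit{iv}) and (\textit{v}) transfer verbatim from \cite{KiS_theory,S,KS,Ki2} once one observes that on $C_c^\infty$ the limit generator $\Lambda$ coincides with the $L^p$-generators constructed there.
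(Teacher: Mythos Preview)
Your overall framework (Trotter--Kato in $C_\infty$, with De Giorgi in $L^p$ as the analytic engine) matches the paper's, but two of your three verifications diverge from the paper's route and one of them has a genuine gap.

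For the uniform resolvent bound the paper simply invokes the $L^\infty$ contraction $\|(\mu+\Lambda_n)^{-1}\|_{\infty\to\infty}\le\mu^{-1}$ from classical theory; your De Giorgi route works too but is unnecessary here. The substantive difference is in the \emph{convergence} step. The paper does \emph{not} apply the De Giorgi $L^\infty$ bound to the difference equation; instead it uses the \emph{H\"older} output of De Giorgi (Theorem~\ref{a1}) together with the weighted decay estimate (Corollary~\ref{a1_cor}) and Arzel\`a--Ascoli to extract a $C_\infty$-convergent subsequence, then identifies the limit via the already-known $L^p$-convergence from \cite{KS}. Your direct Cauchy argument would require feeding $g:=(b_m-b_n)\cdot\nabla u_m$ into Corollary~\ref{a1_cor}, hence controlling $\langle |g|^{p\theta}\rho_x\rangle$ and $\langle |g|^{p\theta'}\rho_x\rangle$ uniformly in $x$ and showing they vanish as $n,m\to\infty$. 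Form-boundedness of $b_m-b_n$ is an $L^2$ statement and does not give $|b_m-b_n||\nabla u_m|\in L^{p\theta}$ with $p\theta>2$; the weighted energy estimate (Proposition~\ref{rho_prop}) controls $\langle|\nabla u_m^{p/2}|^2\rho\rangle$, not $\langle|b_m-b_n|^{p\theta}|\nabla u_m|^{p\theta}\rho\rangle$. You flag this difficulty yourself, but the proposed resolution (``polynomial decay of $\rho_x$ plus uniform form-bound'') does not supply the missing integrability. The compactness argument in the paper sidesteps this entirely.

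Your treatment of the third Trotter--Kato hypothesis is also incomplete. Passing to the distributional limit in $(\mu+\Lambda_n)u_n=f$ tells you $R_\mu f$ is a weak solution, but not that $\{R_\mu f:f\in C_c^\infty\}$ is dense in $C_\infty$. The paper instead verifies the equivalent condition $\mu(\mu+\Lambda_n)^{-1}\to 1$ in $C_\infty$ \emph{uniformly in $n$}, via the resolvent identity
\[
\mu(\mu+\Lambda_n)^{-1}g-\mu(\mu-\Delta)^{-1}g=(\mu+\Lambda_n)^{-1}b_n\cdot\mu(\mu-\Delta)^{-1}\nabla g
\]
and an estimate from \cite{KS,KiS_theory} (or, alternatively, Theorem~\ref{a2}) showing the right-hand side is small in $C_\infty$ uniformly in $n$ as $\mu\to\infty$. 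This is where the equation with right-hand side $|b_n|f$ actually enters. Parts (\textit{ii})--(\textit{v}) are handled as you describe.
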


\begin{remarks}
1.~A crucial feature of assertions (\textit{i})-(\textit{iii}) is that they cover the entire range $0<\delta<4$ of magnitudes of singularities of $b$.

\medskip

2.~Assertions (\textit{iv}), (\textit{v}) are included for the sake of completeness. Assertion (\textit{v}) demonstrates that as  $\delta$ becomes smaller the information that we have about the Feller generator $\Lambda$ becomes more detailed. 

\medskip

3.~The Feller semigroup $e^{-t\Lambda}$ from Theorem \ref{thm1} determines probability measures $\{\mathbb P_x\}_{x \in \mathbb R^d}$ on the canonical space of c\`{a}dl\`{a}g trajectories $\omega_t$, i.e.\,$$e^{-t\Lambda}f(x)=\mathbb E_{\mathbb P_x}f(\omega_t), \quad f \in C_\infty.$$  By a classical result, the process
$$
t \mapsto u(\omega_t)-u(x) + \int_0^t \Lambda u(\omega_s) ds, \quad u \in D(\Lambda), \quad \omega \text{ is c\`{a}dl\`{a}g}, 
$$
is a $\mathbb P_x$-martingale.
That said, there is no description of the domain $D(\Lambda)$ of generator $\Lambda$ even if $|b| \in L^\infty$ with compact support; one can be certain that $C_c^\infty \not\subset D(\Lambda)$. So, for the continuous martingale characterization of $\mathbb P_x$, we have the following results.

\begin{theorem}[\cite{Ki_multi,KiS_sharp}]
\label{thm_sharp}
Let $b \in \mathbf{F}_\delta$ with $\delta<4$. 

1) \cite{KiS_sharp} For every $x \in \mathbb R^d$ there exists a martingale solution of SDE \eqref{sde}, i.e.\,a probability measure $\mathbb P_x$ on the canonical space of continuous trajectories $\bigl(C([0,1],\mathbb R^d),\mathcal B_t=\sigma\{\omega_s \mid 0 \leq s \leq t\}\bigr)$, such that $\mathbb P_x[\omega_0=x]=1$,
$$
\mathbb E_{x}\int_0^t|b(\omega_s)|<\infty, \quad 0<t\leq 1 \qquad (\mathbb E_x:=\mathbb E_{\mathbb P_x})
$$ 
and, for every $\varphi \in C_2^2$, the process
$$
M^\varphi_t:=\varphi(\omega_t)-\varphi(\omega_0) + \int_0^t (-\Delta \varphi + b \cdot \nabla \varphi)(s,\omega_s) ds
$$
is a continuous martingale, so $$\mathbb E_x[M^\varphi_{t_1} \mid \mathcal B_{t_0}]=M_{t_0}^\varphi$$ for all $0 \leq t_0<t_1 \leq 1$ $\mathbb P_x$-a.s.

2) \cite{Ki_multi} The probability measures $\{\mathbb P_x\}_{x \in \mathbb R^d}$ are unique in the sense of approximation (Theorem \ref{thm1}(\textit{ii})) and  constitute a strong Markov family. 
\end{theorem}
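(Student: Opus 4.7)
My strategy is to obtain $\mathbb P_x$ as a weak limit of the measures $\mathbb P^n_x$ associated with the SDEs driven by the smooth bounded approximants $b_n = E_{\varepsilon_n} b$, and then to read off its properties from the Feller semigroup convergence of Theorem \ref{thm1}. For each $n$, classical SDE theory yields a unique strong solution with continuous paths and hence a probability measure $\mathbb P^n_x$ on $C([0,1],\mathbb R^d)$. Tightness will follow from Kolmogorov's criterion, which (after splitting off the Brownian part of $\omega_t-\omega_s$) reduces to a uniform bound of the form
\[
\mathbb E^n_x \Big|\int_s^t b_n(\omega_r)\,dr\Big|^{2k} \leq |t-s|^{2k-1}\int_s^t \bigl(e^{-r\Lambda_n}|b_n|^{2k}\bigr)(x)\,dr.
\]
Taking $2k$ slightly above $\frac{2}{2-\sqrt\delta}$ and exploiting that $|b_n|^2$ inherits the form-bound of $|b|^2$ uniformly in $n$, the weighted resolvent estimate of Theorem \ref{thm1}(iii) together with the smoothing estimate of Theorem \ref{thm1}(iv) deliver the required $n$-uniform control.

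Along a subsequence $\mathbb P^{n_j}_x \Rightarrow \mathbb P_x$. The same $L^p$ bound, Fatou, and $b_n\to b$ in $[L^2_{\loc}]^d$ yield the integrability $\mathbb E_x\int_0^t|b(\omega_s)|\,ds<\infty$. For the martingale property, fix $\varphi\in C_c^2$ and start from the pre-limit identity
\[
\mathbb E^n_x\Big[\Bigl(\varphi(\omega_{t_1})-\varphi(\omega_{t_0})+\int_{t_0}^{t_1}(-\Delta\varphi+b_n\cdot\nabla\varphi)(\omega_r)\,dr\Bigr)\cdot F\Big]=0
\]
for an arbitrary bounded $\mathcal B_{t_0}$-measurable continuous functional $F$ of $\omega$, and pass to the limit. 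The $\Delta\varphi$-term and the boundary values cause no difficulty. For the key term, the Markov property of $\mathbb P^n_x$ rewrites it as an iterated integral of $e^{-r\Lambda_n}(b_n\cdot\nabla\varphi)$ evaluated at appropriate points; the strong $C_\infty$-convergence $e^{-r\Lambda_n}\to e^{-r\Lambda}$ from Theorem \ref{thm1}(i), the weighted $L^p\to C_\infty$ bound of Theorem \ref{thm1}(iii), and the $L^2_{\loc}$ convergence $b_n\cdot\nabla\varphi\to b\cdot\nabla\varphi$ (available since $\nabla\varphi$ has compact support and $|b|^2\in L^1_{\loc}$) then allow the limit transition. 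Continuity of paths under $\mathbb P_x$ is immediate from tightness in $C([0,1],\mathbb R^d)$.

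For part 2), uniqueness in the sense of approximation follows at once from Theorem \ref{thm1}(ii): the finite-dimensional distributions of $\mathbb P_x$ are determined by the semigroup via $\mathbb E_x f(\omega_t)=e^{-t\Lambda}f(x)$ for $f\in C_\infty$, and $e^{-t\Lambda}$ is independent of the admissible approximation. The strong Markov property then reduces to the classical result that a Feller semigroup on $C_\infty$ generates a strong Markov family, combined with the path continuity already proved. The main obstacle is the limit transition in the $\int_{t_0}^{t_1}b_n\cdot\nabla\varphi(\omega_r)\,dr$ term: because $b$ is only locally $L^2$, the continuous mapping theorem is not available, so one must route the limit through semigroup estimates and verify that the weighted $L^p$ control of Theorem \ref{thm1}(iii) is strong enough to absorb the mere $[L^2_{\loc}]^d$ convergence $b_n\to b$. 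It is here that De Giorgi's iteration in $L^p$ with $p>\frac{2}{2-\sqrt\delta}$, which underlies Theorem \ref{thm1}(iii)--(iv), enters essentially.
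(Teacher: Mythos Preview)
This theorem is not proved in the present paper; it is quoted from \cite{KiS_sharp} and \cite{Ki_multi} and stated only to put Theorem~\ref{thm1} in context, so there is no in-paper proof to compare against. Your overall architecture---approximate by smooth $b_n$, prove tightness of the laws $\mathbb P^n_x$, pass to the limit in the martingale problem, and then deduce part 2) from the Feller semigroup of Theorem~\ref{thm1}---is indeed the route taken in the cited references, and your treatment of part 2) via Theorem~\ref{thm1}(\textit{ii}) and the standard Feller-implies-strong-Markov argument is sound once part 1) is in hand.

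The genuine gap is in the tightness step and, for the same reason, in the limit transition for the drift term. Form-boundedness $b\in\mathbf F_\delta$ yields only $|b|^2\in L^1_{\loc}$; it gives no control on $|b|^{2k}$ for $k>1$. Your Kolmogorov estimate requires a uniform-in-$n$ pointwise bound on $e^{-r\Lambda_n}|b_n|^{2k}(x)$, and feeding $|b_n|^{2k}$ into Theorem~\ref{thm1}(\textit{iii})--(\textit{iv}) would need $|b|^{2k}\in L^{p\theta}_{\loc}$ with $p>\frac{2}{2-\sqrt\delta}$, $\theta>1$; for $\delta$ near $4$ this exponent is arbitrarily large, and already the Hardy drift \eqref{hardy} violates it in moderate dimensions. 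The same obstruction hits your limit in $\int b_n\cdot\nabla\varphi$: that quantity converges only in $L^2$, below the $L^{p\theta}$ threshold of Theorem~\ref{thm1}(\textit{iii}). The device that actually works---visible in Theorem~\ref{a2} of the appendix, which underlies \cite{KiS_sharp}---is to keep $|b_n|$ on the right-hand side of the equation and absorb it through form-boundedness inside the De~Giorgi iteration, so that the final estimate involves only $\mathbf 1_{|b_n|>1}+|b_n|^{p\theta}\mathbf 1_{|b_n|\le 1}$ rather than a bare power $|b_n|^{2k}$. Tightness and the drift-integrability $\mathbb E_x\int_0^t|b(\omega_s)|ds<\infty$ then come from uniform bounds on $(\mu+\Lambda_n)^{-1}|b_n|$ (i.e.\ Theorem~\ref{a2} with $f\equiv 1$), not from moment bounds on $|b_n|^{2k}$ alone.
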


The probability measures from Theorems \ref{thm1} and \ref{thm_sharp} are obtained via the same approximation of $b$ by $b_n$ and thus coincide.
Moreover, as follows from Theorem \ref{thm1}, the probability measures $\{\mathbb P_x\}_{x \in \mathbb R^d}$ from Theorem \ref{thm_sharp} determine a strongly continuous Feller semigroup on $C_\infty$ by formula $e^{-t\Lambda}f(x):=\mathbb E_{\mathbb P_x}f(x)$.

Together with the conditional weak uniqueness results of \cite{Ki_Morrey,KiM} for SDE \eqref{sde} and the strong well-posedness result of \cite{KiM_strong} (via the approach of R\"{o}ckner-Zhao), we consider Theorem \ref{thm1} as tentatively completing the description of the diffusion process with form-bounded drift $b$ for $\delta<4$.

\end{remarks}

\bigskip

\section{Semigroup in Orlicz space in the critical regime $\delta=4$}

Here we treat the borderline case $\delta=4$ which forces us to consider the problem in a suitable Orlicz space. Namely, put
\[
\Phi(t)=\cosh t-1,\quad \cosh t:=\frac{e^t+e^{-t}}{2}, \quad t\in\mathbb R.
\]
Clearly, this function is convex, $\Phi(t)=\Phi(|t|)$, $\Phi(t)/t\to 0$ as $t\to 0$, $\Phi(t)/t\to\infty$ as $t\to\infty$, and $\Phi(t)=0$ if and only if   $t=0$. So the space $\mathcal L_\Phi=\mathcal L_\Phi(\mathbb R^d)$ of real-valued $\mathcal L^d$ measurable functions on $\mathbb R^d$ endowed with the gauge norm
\[
\|f\|_\Phi=\inf\{c>0\mid\langle\cosh\frac{f}{c}-1\rangle\leq 1\}
\]
is a Banach space (recall that $\langle \cdot\rangle$ denotes integration over $\mathbb R^d$).

Note that $$\Phi(t)=\int_0^t\sinh \tau d\tau, \quad \Phi(t)=\sum_{m=1}^\infty\frac{t^{2m}}{(2m)!}\quad \text{ and } \quad\Big\langle\Phi\Big( \frac{f}{\|f\|_\Phi}\Big)\Big\rangle\leq 1.$$
 In particular, 
\begin{equation}
\label{f_ineq}
\|f\|_{2m}\leq \big((2m)!\big)^\frac{1}{2m} \|f\|_\Phi,\; m=1,2,\dots, 
\end{equation}
so
 \[
f\in\mathcal L_\Phi\Rightarrow f\in L^p \text{ and } \lim_n\|f_n-f\|_\Phi=0\Rightarrow\lim_n\|f_n-f\|_p=0
 \]
for each $ p\in[2,\infty[$ and $f_n \in\mathcal L_\Phi$. 

\begin{definition}
Let $L_\Phi$ denote the closure of $C_c^\infty$ with respect to gauge norm $\|\cdot\|_\Phi$. This is our Orlicz space.
\end{definition}

It follows from \eqref{f_ineq} that locally the topology in $L_\Phi$ is weaker than the topology in $L^\infty$. On the other hand, the functions in $L_\Phi$ must vanish at infinity sufficiently rapidly, i.e.\,in particular, no slower than functions in $L^2$. We also note that $\mathcal S\subset L_\Phi$.

\begin{theorem}[2nd Main Result]
\label{thm2}
 Assume that $b \in \mathbf{F}_4$, i.e. 
\begin{equation}
\label{b_cond}
\|b\varphi\|_2^2 \leq 4 \|\nabla \varphi\|_2^2+c(4)\|\varphi\|_2^2 \quad \forall\,\varphi \in W^{1,2}
\end{equation}
and that $b$ has compact support: $$\sprt b \subset B_{R_1} \quad \text{for some } R_1<\infty.$$
Let $\{b_n\}_{n\geq 1}$ be any sequence of $C^\infty$ smooth vector fields that satisfy \eqref{b_cond} with the same constants as $b$ and are such that $$\lim_{n\to\infty}\|b-b_n\|_2 =0\ \quad \text{ and } \cup_{n\geq n_0}\sprt b_n\subset B_R \text{ for some $R$, where $R_1<R<\infty$ and $n_0 \gg 1$}$$
(e.g.\,one can take $b_n:=E_{\varepsilon_n} b$, $\varepsilon_n \downarrow 0$, where $E_{\varepsilon}$ is the Friedrichs mollifier. Then $\sprt \;b_n\subset B_{R_1+\frac{1}{n}}$). 
Let $u_n=u_n(t,x)$ denote the classical solution to Cauchy problem
\begin{align*}
(\partial_t -\Delta + b_n\cdot \nabla)u_n=0, \quad u_n(0)= f\in C_c^\infty. 
\end{align*}
Put
$$T^t_nf:=u_n(t), \quad t \geq 0.$$
The following are true:

\smallskip

{\rm (\textit{i})} For every $n \geq 1$, 
\[
T_n^tf\in L_\Phi \text{ and } \|T_n^tf\|_\Phi\leq e^{\omega t}\|f\|_\Phi, \quad f \in C_c^\infty(\mathbb R^d),
\]
where constant $\omega \geq 0$ depends only on $d$, $c(4)$, $R$.

The operators $\{T_n^t\}_{t \geq 0}$ extend by continuity to a positivity preserving quasi contraction strongly continuous semigroup in $L_\Phi$, say, $e^{-t\Lambda_n}$.
Its generator $\Lambda_n$ in an appropriate operator realization of $-\Delta + b_n \cdot \nabla$ in $L_\Phi$.

\smallskip

{\rm (\textit{ii})} The limit
$$s\mbox{-}L_\Phi\mbox{-}\lim_n e^{-t\Lambda_n} \quad (\text{loc.\, uniformly in $t \geq 0$})$$ 
exists and determines a positivity preserving quasi contraction strongly continuous semigroup in $L_\Phi$, say, $e^{-t\Lambda}$. For every $g \in L_\Phi$, 
$
u:=e^{-t\Lambda }g
$
satisfies $u \in L^2_{\loc}([0,\infty[,W^{1,2})$ and
is a weak solution to parabolic equation $(\partial_t-\Delta + b \cdot \nabla)u=0$ in the sense that
$$
-\langle u,\partial_t \psi\rangle + \langle \nabla u, \nabla \psi\rangle + \langle b \cdot \nabla u,\psi\rangle=0 \quad \text{ for all } \psi \in C_c^1(]0,\infty[ \times \mathbb R^d).
$$

\smallskip

{\rm (\textit{iii})} The semigroup $T^t$ is unique in the sense of approximations, i.e.\,does not depend on the choice of a regularization $b_n$ of $b$ as long as $b_n$ satisfies the above assumptions.
\end{theorem}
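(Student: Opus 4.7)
The plan is to prove Theorem \ref{thm2} directly by establishing (a) a uniform a priori Orlicz estimate for the classical solutions $u_n$ of the smoothed equations, (b) Cauchy's criterion for $\{u_n\}$ in $L_\Phi$ locally uniformly in $t$, and (c) passage to the weak limit in the PDE together with a uniqueness-in-approximation argument. The scheme is a weighted adaptation of the torus energy argument of \cite{Ki_Orlicz}, in which the finite volume of the torus is replaced by an integrable spatial weight $\rho(x) = (1+|x|^2)^{-\alpha}$ with $\alpha > d/2$.

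For step (a), fix $c>0$, set $v_n:=e^{-\omega t}u_n$ (with $\omega$ to be chosen), and work with the weighted Orlicz functional $F_\rho(v):=\int \rho\,\Phi(v/c)\,dx$. Testing $(\partial_t - \Delta + b_n\cdot\nabla+\omega)v_n=0$ against $\rho\sinh(v_n/c)$ and integrating by parts (using $v\sinh(v/c)\ge 2c\,\Phi(v/c)$ for the $\omega$-term and $|\Delta\rho|\lesssim\rho$ for the weight-gradient term) yields
\[
c\tfrac{d}{dt}F_\rho + \tfrac{1}{c}\int \rho\cosh(v_n/c)|\nabla v_n|^2\,dx + 2\omega c F_\rho \leq \Bigl|\int \rho\sinh(v_n/c)\, b_n\cdot \nabla v_n\Bigr| + C F_\rho.
\]
Bounding the drift by Cauchy-Schwarz and applying \eqref{b_cond} to the test function $\psi:=\sqrt{\rho}\,\sinh(v_n/c)/\sqrt{\cosh(v_n/c)}\in W^{1,2}$, combined with the pointwise identity $|\nabla (\sinh(v)/\sqrt{\cosh(v)})|^2 = \frac{(\cosh^2(v) + 1)^2}{4\cosh^3(v)}|\nabla v|^2 \leq \cosh(v)\,|\nabla v|^2$, one sees that the drift exactly saturates the dispersion: the critical coefficient $4$ appears precisely as the form-bound for $\psi$. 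The compact support of $b_n$ in $B_{R+1/n}$ is essential here---on $\sprt b_n$ the weight $\rho$ is bounded above and below by positive constants, so the transition from unweighted to weighted form-boundedness does not add any multiplicative factor. The residual lower-order terms (in particular a $\int \rho|\nabla v_n|^2$ contribution controlled via the borderline $L^2$ energy inequality and $|\nabla\rho|^2/\rho\lesssim\rho$) are absorbed by choosing $\omega$ large in $d$, $c(4)$, $R$, $\alpha$, yielding $F_\rho(v_n(t))\leq F_\rho(f)$. A Jensen-type comparison between weighted and unweighted gauges then produces the claimed $\|T_n^t f\|_\Phi \leq e^{\omega t}\|f\|_\Phi$.

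Steps (b) and (c) are more routine: $w:=u_m-u_n$ satisfies $(\partial_t - \Delta + b_m\cdot\nabla)w=(b_n-b_m)\cdot\nabla u_n$ with zero initial data, and rerunning the weighted Orlicz estimate of step (a) for $w$---treating the source term via Cauchy-Schwarz, using the uniform local $L^2$-bound on $\nabla u_n$ (classical parabolic regularity for smooth $b_n$ with preserved form-bound), and invoking $\|b_n-b\|_2\to 0$---forces $\|w(t)\|_\Phi\to 0$ locally uniformly in $t$. The limit $u$ inherits the $L^2_{\loc}([0,\infty[, W^{1,2})$-regularity and satisfies the weak PDE by direct limit passage in the standard integral identity for $u_n$. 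Assertion (iii) is then immediate from the same Cauchy estimate, since any admissible sequence $\{\tilde b_n\}$ still converges to $b$ in $[L^2_{\loc}]^d$.

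The main obstacle is the exact matching of drift and dispersion coefficients at $\delta=4$: neither the $L^2$ energy inequality nor the Orlicz energy identity leaves any intrinsic slack, so every constant must be tracked with care, and every slack must come either from the exponential factor $e^{\omega t}$, the compact support of $b$, or the weight $\rho$. The weight has to be chosen to simultaneously be (i) integrable, (ii) essentially constant on $\sprt b$, and (iii) to satisfy $|\nabla\rho|^2/\rho, |\Delta\rho|\lesssim\rho$, so that all weight-induced corrections remain absorbable as lower-order terms; this is the substance of the ``appropriate weights'' remark in the introduction.
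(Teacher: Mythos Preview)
There is a genuine gap in step (a): with your choice of test function the drift--dispersion balance does not close at $\delta=4$. Your Cauchy--Schwarz splitting gives
\[
\Bigl|\int \rho\,\sinh v\; b_n\!\cdot\!\nabla v\Bigr|\;\le\;\|b_n\psi\|_2\,D^{1/2},\qquad D:=\int\rho\cosh v\,|\nabla v|^2,\quad \psi=\sqrt{\rho}\,\frac{\sinh v}{\sqrt{\cosh v}},
\]
and the form-bound together with your pointwise inequality $(g')^2\le\cosh v$ (for $g=\sinh/\sqrt{\cosh}$, which is sharp at $v=0$) yields $\|b_n\psi\|_2^2\le 4D+(\text{l.o.})$. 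Optimizing Young's inequality then gives a drift bound $\le 2D+(\text{l.o.})$, against dispersion $D$; the inequality is off by a factor of $2$ and cannot be repaired by tuning constants. The paper closes it by a different factorization: writing $\sinh v\,\nabla v=4\cosh(v/2)\,\nabla\cosh(v/2)$, it applies the form-bound to $w=\zeta_R\cosh(v/2)$ and uses $8|\nabla\cosh(v/2)|^2=(\cosh v-1)|\nabla v|^2$, so that the drift is bounded by $8\|\nabla\cosh(v/2)\|_2^2+(\text{l.o.})=D-\|\nabla v\|_2^2+(\text{l.o.})$. This matches the dispersion exactly and leaves the crucial surplus $\|\nabla v\|_2^2$.

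That surplus is not cosmetic: it is the only source of the uniform gradient estimate $\int_0^t\|\nabla v_n\|_2^2\,ds\le C(t)\|f\|_\Phi^2$ used in step (b). Your appeal to a ``borderline $L^2$ energy inequality'' is unfounded, since at $\delta=4$ the $L^2$ energy identity produces a nonpositive dispersion coefficient ($4(p-1)/p-2\sqrt{\delta}=-2$ at $p=2$); classical parabolic estimates for smooth $b_n$ depend on $\|b_n\|_\infty$ and are not uniform in $n$. Finally, note that the paper does not use an integrable weight $\rho$ at all: it works with the unweighted functional $\langle\cosh(v/c)-1\rangle$, localizes only through the cutoff $\zeta_R$ inserted when applying the form-bound (legitimate since $b_n$ has compact support), obtains an inequality with an \emph{additive} defect $\langle\cosh(v(t)/c)-1\rangle\le\langle\cosh(f/c)-1\rangle+Gt$, and then converts this into a multiplicative Orlicz bound via a time-discretization/semigroup trick. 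Your weighted scheme would in addition require a comparison between weighted and unweighted gauge norms that you do not provide.
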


\subsection{Some extensions of Theorem \ref{thm2}}
\label{ext_sect}

We can remove the assumption of the compact support of $b$, but we still need some assumptions on the rate of decay of $b$ at infinity. That is,
assume that vector field $b:\mathbb R^d\rightarrow\mathbb R^d$ can be represented as the sum
$b=b^{(1)}+b^{(2)}$
where
\[
b^{(1)}\in\mathbf F_4, \quad  b^{(2)} \in [L^\infty\cap L^2]^d
\]
are such that
\[
\sprt b^{(1)}\subset B_{R} \;\text{ and }\; \sprt b^{(1)}\cap \sprt b^{(2)}=\varnothing.
\]
Set  $b_n^{(1)}=\mathbf 1_{|b^{(1)}|\leq n}b^{(1)}$ and put
$
b_n:= b_n^{(1)}+b^{(2)}.
$

\begin{theorem}
\label{thm3}
Let $b$ be as above. Then the assertions of Theorem \ref{thm2} remain valid with the following modification: for every $n \geq 1$,
\[
\|T_n^t f\|_\Phi\leq e^{(\lambda+G)t} \|f\|_\Phi, \quad t \geq 0,
\]
where $\lambda= 2^{-1}c_5+2^{-1}\|b^{(2)}\|_\infty^2$ and $G= c_5\langle\mathbf 1_{B_{aR_1}}\rangle+\|b^{(2)}\|_2^2$, $c_5=c(4)+4(d-1)R^{-2}_1
$, $a=1+\theta^{-1}$ (constants $c_5$, $a$ and $\theta$ are from the proof of Theorem \ref{thm2}).
\end{theorem}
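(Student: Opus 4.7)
My strategy is to adapt the proof of Theorem \ref{thm2} to the decomposition $b=b^{(1)}+b^{(2)}$. The disjoint-support hypothesis is essential: on $B_R^c$ only $b^{(2)}$ is present and on $B_R$ only $b^{(1)}$, so the approximating vector fields $b_n=b_n^{(1)}+b^{(2)}$ still satisfy \eqref{b_cond} with the same constants as $b^{(1)}$, while retaining uniform integrable control at infinity through $b^{(2)}\in L^\infty\cap L^2$. The main goal is therefore the quasi-contraction estimate in $L_\Phi$ with the enlarged rate $\lambda+G$.

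The central step is the energy estimate. I would test the smooth Cauchy problem $(\partial_t-\Delta+b_n\cdot\nabla)u_n=0$ against $\Phi'(u_n/c)\rho$, using the same family of weights $\rho$ from the proof of Theorem \ref{thm2} and with $c:=\|f\|_\Phi$; equivalently, exploit $\Phi(t)=\sum_m t^{2m}/(2m)!$ and carry out the estimate termwise on the moments $\langle u_n^{2m}\rho\rangle$. The contribution from $b_n^{(1)}$ is handled exactly as in Theorem \ref{thm2}: form-boundedness inside $B_R$ combined with the weighted commutator estimates produces the Gronwall rate $\tfrac12 c_5$ and the local volume source $c_5\langle\mathbf{1}_{B_{aR_1}}\rangle$, with $c_5=c(4)+4(d-1)R_1^{-2}$ coming from the weight. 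The new drift term coming from $b^{(2)}$ is split via Cauchy--Schwarz and Young's inequality: the high-order piece, bounded by $\tfrac12\|b^{(2)}\|_\infty^2\langle\Phi(u_n/c)\rho\rangle$, feeds directly into the Gronwall rate and yields $\lambda=\tfrac12 c_5+\tfrac12\|b^{(2)}\|_\infty^2$; after the accompanying gradient factor is absorbed into the diffusion, the remaining cross term is controlled by $\|b^{(2)}\|_2^2$, using that $\rho$ is integrable and $b^{(2)}\in L^2$, thereby contributing the additive $\|b^{(2)}\|_2^2$ to $G$. Gronwall's inequality then gives $\|T_n^tf\|_\Phi\leq e^{(\lambda+G)t}\|f\|_\Phi$ with $\lambda,G$ exactly as in the statement.

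Once the uniform estimate is in place, the remaining assertions follow from Theorem \ref{thm2}. The sequence $\{e^{-t\Lambda_n}f\}$ is Cauchy in $L_\Phi$ by a Duhamel argument against $(b_n-b_m)\cdot\nabla u_m$, using $b_n\to b$ in $[L^2_{\loc}]^d$ (which holds because $b_n^{(1)}=\mathbf{1}_{|b^{(1)}|\le n}b^{(1)}\to b^{(1)}$ in $[L^2_{\loc}]^d$ by dominated convergence, while $b^{(2)}$ is fixed) together with the uniform bound on $\nabla u_m$ in $L^2_{\loc}([0,T],L^2)$ that is a byproduct of the same energy estimate. The identification of the limit as a strongly continuous, positivity preserving semigroup and of $e^{-t\Lambda}g$ as a weak $W^{1,2}_{\loc}$ solution of $(\partial_t-\Delta+b\cdot\nabla)u=0$ is then a verbatim transcription of Theorem \ref{thm2}(\textit{ii}), (\textit{iii}); uniqueness in the sense of approximations likewise carries over.

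The main obstacle I expect is making the $b^{(2)}$-estimate compatible with the weighted framework used for $b^{(1)}$. The weight $\rho$ was chosen to localize the analysis near the compact set $B_R$ on which $b^{(1)}$ lives, and one must verify that the far-field part $b^{(2)}$ enters only through the quantities $\|b^{(2)}\|_\infty$ and $\|b^{(2)}\|_2$. The disjoint-support hypothesis is what cleanly decouples the two pieces in the form-boundedness inequality, and the hypothesis $b^{(2)}\in L^2$ is precisely what neutralizes the integrable weight in the remaining cross term, producing the additive $\|b^{(2)}\|_2^2$ contribution to $G$ rather than an additional exponential factor.
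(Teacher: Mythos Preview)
Your overall strategy is right, but there is a genuine gap at the heart of the energy estimate, precisely at the point where you say ``after the accompanying gradient factor is absorbed into the diffusion''. In the identity (with $w:=\cosh\frac{v}{2}$)
\[
\lambda\langle v\sinh v\rangle+\partial_t\langle\cosh v-1\rangle+\|\nabla v\|_2^2+8\|\nabla w\|_2^2=-4\langle b_nw,\nabla w\rangle,
\]
the available diffusion budget for absorbing gradient terms is \emph{exactly} $8\|\nabla w\|_2^2$, and because $\delta=4$ is borderline, the $b^{(1)}$ contribution alone already consumes all of it. Indeed, if you split by the triangle inequality and apply $4|\alpha\beta|\le|\alpha|^2+4|\beta|^2$ to $b_n^{(1)}$ and $b^{(2)}$ separately, you get $\|b_n^{(1)}\zeta_{R_1}w\|_2^2+\|b^{(2)}w\|_2^2+8\|\nabla w\|_2^2$, and then form-boundedness of $b^{(1)}$ with $\delta=4$ produces an \emph{additional} $4\|\nabla w\|_2^2$, for a total of $12\|\nabla w\|_2^2$. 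There is nothing left in the equation to absorb the extra $4\|\nabla w\|_2^2$ (note $\|\nabla w\|_2^2=\tfrac14\langle\sinh^2\tfrac{v}{2}|\nabla v|^2\rangle$ is not controlled by $\|\nabla v\|_2^2$). The paper avoids this by exploiting the disjoint supports \emph{componentwise}: writing
\[
4|\langle b_nw,\nabla w\rangle|\le \|b_n^{(1)}\zeta_{R_1}w\|_2^2+\|b^{(2)}w\|_2^2+4\sum_{i=1}^d\big\langle(\mathbf 1_{\sprt b_{n,i}^{(1)}}+\mathbf 1_{\sprt b_i^{(2)}})|\nabla_i w|^2\big\rangle,
\]
and then using $\mathbf 1_{\sprt b_{n,i}^{(1)}}+\mathbf 1_{\sprt b_i^{(2)}}\le 1$ pointwise to bound the last sum by $4\|\nabla w\|_2^2$, not $8\|\nabla w\|_2^2$. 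Only after this does form-boundedness of $b^{(1)}$ close the estimate as $4|\langle b_nw,\nabla w\rangle|\le 8\|\nabla w\|_2^2+c_5\langle\zeta_{R_1}w^2\rangle+\|b^{(2)}w\|_2^2$. So the disjoint-support hypothesis is not merely decoupling the form-boundedness inequality; it is what allows $b^{(1)}$ and $b^{(2)}$ to share the single gradient budget. Your sketch does not capture this, and without it the argument does not close at $\delta=4$.

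A smaller point: there is no integrable weight $\rho$ in the proof of Theorem~\ref{thm2}. The test function there is simply $\sinh v$; the function $\zeta_R$ is a compactly supported cutoff inserted only when invoking form-boundedness on the localized $b^{(1)}$, and $\zeta_M$ (with $M\to\infty$) is used once to justify an integration by parts. The constant $\|b^{(2)}\|_2^2$ enters not through pairing with an integrable weight but directly from $\|b^{(2)}w\|_2^2\le\tfrac12\|b^{(2)}\|_\infty^2\langle v\sinh v\rangle+\|b^{(2)}\|_2^2$ via $\cosh^2\tfrac{v}{2}=\tfrac12(\cosh v-1)+1$. Once the corrected energy inequality is in hand, the rest of your outline (Cauchy property, identification of the limit, uniqueness) is fine and matches the paper.
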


The previous theorem applies to vector field
$$b(x)=(d-2)\mathbf 1_{B_R}(x)|x|^{-2}x + C\mathbf 1_{B^c_R}(x)|x|^{-\alpha-1}x,\quad \alpha>\frac{d}{2},$$ 
where $R>0$, $C<\infty$. 
That said, a model example of a vector field $b \in \mathbf{F}_\delta$ having critical-order singularity at the origin and critical decay at infinity is
\begin{equation}
\label{model}
b(x)=\frac{\sqrt{\delta}}{2}(d-2)|x|^{-x}x
\end{equation}
(note the sign in front of $\sqrt{\delta}$).
As the previous example shows, Theorem \ref{thm3} allows us to take $\delta=4$, but it still imposes a stronger requirement, in comparison with \eqref{model}, on the rate of decay of $b$ outside of a ball of large radius. The next theorem and the remark after address that.

\begin{theorem}
\label{thm4}
 Let $|b|\in L^2$, ${\rm div}\, b\in L^1_\loc$. Set $V=0\vee {\rm div}\,b$ and assume that $V=V_1+V_2$,
 $$V_2 \in L^\infty, \quad \|V_1^\frac{1}{2}\varphi\|_2^2\leq 4\|\nabla\varphi\|_2^2+c(4)\|\varphi\|_2^2 \text{ for all } \varphi\in W^{1,2},\text{ and } \sprt V_1 \subset B_{R_1}.$$
 Then the assertions of Theorem \ref{thm2} remain valid with the following modification: for every $n \geq 1$,
 \[
 \|T_n^tf\|_\Phi\leq e^{(\lambda +\|V_2\|_\infty+G)t}\|f\|_\Phi, \quad t \geq 0,
 \]
where $\lambda=c(4)+4(d-1)R^{-2}$, $G=2\lambda\langle\mathbf 1_{B_{aR_1}}\rangle$, $a=3$.
\end{theorem}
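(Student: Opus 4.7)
I would follow the architecture of the proof of Theorem~\ref{thm2}, with one essential modification. Since $b$ itself is no longer assumed form-bounded, the drift term in the Orlicz energy identity cannot be absorbed via form-boundedness of $b$; instead, I would integrate by parts to turn $b_n\cdot\nabla u_n$ into a zero-order expression involving ${\rm div}\,b_n$, after which the decomposition $V=V_1+V_2$ of the positive part of ${\rm div}\,b$ takes over the role played by the form-boundedness of $b$ in Theorem~\ref{thm2}.

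Concretely, set $b_n:=E_{\varepsilon_n}b$, so that ${\rm div}\,b_n=E_{\varepsilon_n}{\rm div}\,b$ and $0\vee{\rm div}\,b_n\leq(V_1)_n+\|V_2\|_\infty$ with $(V_1)_n^{1/2}\in\mathbf F_4$ (same $c(4)$) and $\sprt(V_1)_n\subset B_{R_1+\varepsilon_n}$. Testing the smooth Cauchy problem against $\beta^{-1}\sinh(\beta u_n)$, using $\sinh(\beta u_n)\nabla u_n=\beta^{-1}\nabla\cosh(\beta u_n)$, and integrating the drift by parts (the boundary piece $\beta^{-2}\langle{\rm div}\,b_n\rangle$ vanishes because $|b|\in L^2$), one arrives at
\[
\partial_t\beta^{-2}\langle w_n\rangle+\langle\cosh(\beta u_n)|\nabla u_n|^2\rangle\leq\beta^{-2}\langle(V_1)_n,w_n\rangle+\beta^{-2}\|V_2\|_\infty\langle w_n\rangle,\qquad w_n:=\cosh(\beta u_n)-1.
\]

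To close the estimate I would introduce a radial cutoff $\eta\in C_c^\infty$ with $\eta=1$ on $B_{R_1}$, $\sprt\eta\subset B_{aR_1}$ ($a=3$), chosen so that $-\eta\Delta\eta\leq(d-1)R_1^{-2}\mathbf 1_{B_{aR_1}}$. Since $\sprt(V_1)_n\subset B_{R_1}$ (for $n$ large), the form-boundedness of $(V_1)_n^{1/2}$ applied to $\eta\sqrt{w_n}\in W^{1,2}$ yields $\langle(V_1)_n,w_n\rangle\leq 4\|\nabla(\eta\sqrt{w_n})\|_2^2+c(4)\|\eta\sqrt{w_n}\|_2^2$. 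Expanding via $\|\nabla(\eta\sqrt{w_n})\|_2^2=\int\eta^2|\nabla\sqrt{w_n}|^2-\int\eta\Delta\eta\,w_n$ together with $|\nabla\sqrt{w_n}|^2=\tfrac{\beta^2}{4}(\cosh(\beta u_n)+1)|\nabla u_n|^2$ (which uses $\sinh^2=(\cosh-1)(\cosh+1)$), this becomes
\[
\langle(V_1)_n,w_n\rangle\leq\beta^2\int\eta^2(\cosh(\beta u_n)+1)|\nabla u_n|^2+\lambda\int_{B_{aR_1}}w_n,\quad \lambda=c(4)+4(d-1)R_1^{-2}.
\]
At the critical value $\delta=4$ the piece $\beta^2\eta^2\cosh(\beta u_n)|\nabla u_n|^2$ is exactly absorbed by the dispersion, while the residual $\beta^2\eta^2|\nabla u_n|^2$, geometrically localized to $B_{aR_1}$, is paid for by the local $L^1$ term $\int_{B_{aR_1}}w_n$; this is the origin of the additive constant $G=2\lambda|B_{aR_1}|$. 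A Gronwall argument together with the Luxembourg scaling $\Phi(t/\mu)\leq\mu^{-2}\Phi(t)$ (valid for $\mu\geq 1$, since $\cosh(t/\mu)\leq\cosh(t)$) converts the resulting $\langle w_n\rangle$-estimate into the announced $\|T_n^tf\|_\Phi\leq e^{(\lambda+\|V_2\|_\infty+G)t}\|f\|_\Phi$. The remaining assertions---strong continuity of $e^{-t\Lambda_n}$ on $L_\Phi$, convergence to $e^{-t\Lambda}$, the weak-solution identity, and uniqueness in the sense of approximations---carry over verbatim from the proof of Theorem~\ref{thm2}, since only the rate constant in the quasi-contraction estimate changes.

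\textbf{Main obstacle.} At $\delta=4$ the dispersion from $-\Delta$ and the form-bound cost of $V_1$ balance exactly, leaving no slack to absorb the leftover $\int_{B_{R_1}}|\nabla u_n|^2$ generated by the weighted form-bound (the $L^2$ quadratic form is merely non-negative, not coercive, at the critical form-bound). The saving idea is to exploit the compact support of $V_1$: with the buffer cutoff of radius $3R_1$ this residual collapses into a purely local $L^1$ contribution, which is exactly what produces the additive constant $G$ in the exponential rate.
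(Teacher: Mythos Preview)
Your overall architecture---integrate the drift term by parts and replace the role of form-boundedness of $b$ by form-boundedness of $V_1^{1/2}$---is exactly right and matches the paper. However, your choice of test function in the form-boundedness step does not close, and the sentence ``the residual $\beta^2\eta^2|\nabla u_n|^2$ \dots\ is paid for by the local $L^1$ term $\int_{B_{aR_1}}w_n$'' hides a genuine gap.

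With your choice $\varphi=\eta\sqrt{w_n}$, $w_n=\cosh(\beta u_n)-1$, the identity $\sinh^2=(\cosh-1)(\cosh+1)$ gives $4|\nabla\sqrt{w_n}|^2=\beta^2(\cosh(\beta u_n)+1)|\nabla u_n|^2$, so after the $\beta^{-2}$ scaling the form-bound contributes $\int\eta^2(\cosh(\beta u_n)+1)|\nabla u_n|^2$ to the right-hand side. But the dispersion on the left is only $\int\cosh(\beta u_n)|\nabla u_n|^2$. On the set $\{\eta=1\}\supset B_{R_1}$ the difference is $-\int_{B_{R_1}}|\nabla u_n|^2$, i.e.\ a \emph{gradient} term with the wrong sign, and nothing in your inequality---neither $\int_{B_{aR_1}}w_n$, nor the $\|V_2\|_\infty$ shift, nor Gronwall---can absorb a local $H^1$ quantity. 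The argument therefore does not produce $\partial_t\langle\cosh v-1\rangle+\|\nabla v\|_2^2\leq G$.

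The paper fixes this by applying form-boundedness of $V_1^{1/2}$ to $\zeta_{R_1}\cosh\frac{v}{2}$ rather than $\zeta_{R_1}\sqrt{\cosh v-1}$, using $\cosh v-1=2(\cosh^2\tfrac{v}{2}-1)$. The point is that $8|\nabla\cosh\tfrac{v}{2}|^2=(\cosh v-1)|\nabla v|^2$, while the full dispersion from $-\Delta v$ tested against $\sinh v$ is $\cosh v\,|\nabla v|^2=(\cosh v-1)|\nabla v|^2+|\nabla v|^2$. Thus the form-bound cost is absorbed \emph{with $\|\nabla v\|_2^2$ to spare} on the left, and only the cutoff terms $[2c(4)+8(d-1)R_1^{-2}]\langle\zeta_{R_1}\cosh^2\tfrac{v}{2}\rangle$ survive on the right; these are zero-order and give the constants $\lambda$ and $G$ as stated. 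Replacing $\sqrt{w_n}$ by $\cosh\tfrac{\beta u_n}{2}$ in your scheme repairs the gap; after that, the rest of your outline (Gronwall, Luxemburg scaling, and carry-over of strong continuity, convergence, weak-solution identity and approximation-uniqueness from Theorem~\ref{thm2}) is correct.
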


Furthermore, one can remove condition $|b|\in L^2$ in Theorem \ref{thm4} by considering $\tilde{b}=b+\mathsf{f}$, where $b$ satisfies the assumptions of Theorem \ref{thm4}, and $|\mathsf{f}|\in L^\infty$, ${\rm div\,} \mathsf{f}\in L^1_\loc$, $V_3:=0\vee {\rm div\,} \mathsf{f}\in L^\infty$. See Remark \ref{thm4_rem} after the proof of Theorem \ref{thm4} for details.
This allows to include model drift \eqref{model}, i.e.\,take
$$\tilde{b}(x)=(d-2)|x|^{-2}x.$$ (Set $\tilde{b}_n=(d-2)E_n(\mathbf 1_{B_1}|x|^{-2}x)+(d-2)\mathbf 1_{B_1^c}|x|^{-2}x$.)

\begin{remark}
One can combine drifts considered in the previous theorems, e.g.\,one can consider drift $b+\mathsf{f}$ with $b$ from Theorem \ref{thm3} and $\mathsf{f}$ from Theorem \ref{thm4}, such that $$b^{(1)}\in \mathbf F_{\delta_1}, \quad V_1^\frac{1}{2}\in \mathbf F_{\delta_2}, \quad \delta_1+\delta_2=4.$$
\end{remark}

The main disadvantage of the previous results is that the singularities of $b$ are contained in a bounded set. In the next theorem we improve these results as follows.

\begin{theorem}
\label{thm5}
Let $\{x_m\}\subset\mathbb R^d$, $\{R_m\}\subset\mathbb R_+$ be such that $\lim_m|x_m|=\infty$ and $B(x_m,R_m)\cap B(x_k,R_k)=\emptyset$ for all $m\neq k$. Let $b(x)=\sum_{m=1}^\infty b^{(m)}(x)$ be such that
\[
\sprt \;b^{(m)}\subset B(x_m,R_m), \; \|b^{(m)}\varphi\|_2^2\leq \delta_m\|\nabla\varphi\|_2^2+c(\delta_m)\|\varphi\|_2^2 \quad \varphi\in W^{1,2},
\]
\[
\sum_{m=1}^\infty\delta_m=4, \;\;\sum_{m=m_0}^\infty(R_m^{-2}+R_m^d)\delta_m<\infty, \text{ and } \sum_{m=m_0}^\infty (1+R_m^d)c(\delta_m)<\infty \text{ for some } m_0>>1.
\]
Then all assertions of Theorem \ref{thm2} remain valid.
\end{theorem}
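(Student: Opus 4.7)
The plan is to follow the blueprint of Theorems \ref{thm2}--\ref{thm4}: smoothly truncate $b$ to $b_n$ so that $b_n\to b$ in $[L^2_\loc]^d$ without increasing form-bounds, establish an $n$-uniform Orlicz a priori bound on the classical solutions $u_n=T_n^tf$, and then pass to the limit by a Cauchy-in-$L_\Phi$ argument identical to the one in Theorem \ref{thm2}. For the regularization, put $b_n:=\sum_{m\leq M_n}E_{\varepsilon_n}(\mathbf 1_{|b^{(m)}|\leq n}b^{(m)})$ with $M_n\uparrow\infty$ and $\varepsilon_n\downarrow 0$ (on a scale smaller than every active $R_m$), so that each summand is smooth, supported in $B(x_m,R_m+\varepsilon_n)$, and inherits the form-bound of $b^{(m)}$ with the same constants $\delta_m,c(\delta_m)$. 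Once the Orlicz bound is established, strong convergence to a semigroup, weak-solution identification via the $L^2_\loc([0,\infty[,W^{1,2})$-bound obtained by testing $u_n$ against itself, and uniqueness-in-the-sense-of-approximations all repeat verbatim the proof of Theorem \ref{thm2}.

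The a priori bound is obtained as in Theorem \ref{thm2}: test $(\partial_t-\Delta+b_n\cdot\nabla)u_n=0$ against $\sinh(u_n/c)$ and use $\sinh(u_n/c)\nabla u_n=c\nabla w_n$, $w_n:=\cosh(u_n/c)-1$, getting
\[
\partial_t\langle w_n\rangle+\tfrac{1}{c}\langle|\nabla u_n|^2\cosh(u_n/c)\rangle+\langle b_n\cdot\nabla u_n,\sinh(u_n/c)\rangle=0.
\]
Cauchy--Schwarz and $\sinh^2\leq\cosh^2$ reduce the task to estimating $\langle|b_n|^2\phi^2\rangle$ with $\phi^2:=\cosh(u_n/c)=w_n+1$. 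The new element is applying form-boundedness to the infinite sum. Fix $a=1+\theta^{-1}$ (as in the proof of Theorem \ref{thm2}), and pick smooth cutoffs $\eta_m$ equal to $1$ on $\sprt b^{(m)}\subset B(x_m,R_m)$, supported in $B(x_m,aR_m)$, with $\|\nabla\eta_m\|_\infty\lesssim R_m^{-1}$; by enlarging $m_0$ one may assume the collection $\{B(x_m,aR_m)\}_{m\geq m_0}$ has bounded multiplicity (this follows from $|x_m|\to\infty$ together with the summability conditions on $R_m$). Then disjointness of $\sprt b^{(m)}$ gives
\[
\langle|b_n|^2\phi^2\rangle=\sum_m\langle|b_n^{(m)}|^2(\eta_m\phi)^2\rangle\leq\sum_m\bigl(\delta_m\|\nabla(\eta_m\phi)\|_2^2+c(\delta_m)\|\eta_m\phi\|_2^2\bigr),
\]
and IMS-type localization yields $\|\nabla(\eta_m\phi)\|_2^2\leq\|\eta_m\nabla\phi\|_2^2+CR_m^{-2}\langle\mathbf 1_{B(x_m,aR_m)}\phi^2\rangle$. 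Summing the first contributions and using $\sum_m\delta_m\eta_m^2\leq\sum_m\delta_m=4$,
\[
\sum_m\delta_m\|\eta_m\nabla\phi\|_2^2\leq 4\|\nabla\phi\|_2^2,
\]
which saturates and is exactly absorbed into the dispersion term, as in the critical case of Theorem \ref{thm2}. Writing $\phi^2=w_n+1$, the remaining error reads
\[
\sum_{m}\bigl(C\delta_m R_m^{-2}+c(\delta_m)\bigr)\bigl(\langle\mathbf 1_{B(x_m,aR_m)}w_n\rangle+|B(x_m,aR_m)|\bigr)\leq\lambda\langle w_n\rangle+G,
\]
where $\lambda<\infty$ follows from $\sup_{m\geq m_0}(\delta_m R_m^{-2}+c(\delta_m))<\infty$ together with bounded multiplicity, and $G<\infty$ follows from $\sum_{m\geq m_0}(R_m^{d-2}\delta_m+R_m^dc(\delta_m))<\infty$, which is contained in the hypotheses (since $R_m^{d-2}\leq R_m^{-2}+R_m^d$). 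The finitely many indices $m<m_0$ give finitely many critically-singular pieces absorbed by the device of Theorem \ref{thm4}. Gronwall's lemma then delivers $\|T_n^t f\|_\Phi\leq e^{(\lambda+G)t}\|f\|_\Phi$ uniformly in $n$.

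The main obstacle is precisely this localization step: because $\sum\delta_m=4$ must be saturated, the dispersion has to be \emph{exactly} cancelled, leaving no slack for an $\varepsilon$-reduction of the critical threshold, and the resulting IMS commutator errors scale as $R_m^{-1}$ in gradient and $R_m^{d/2}$ in volume; it is this scaling that dictates the exact form of the summability hypotheses $\sum(R_m^{-2}+R_m^d)\delta_m<\infty$ and $\sum(1+R_m^d)c(\delta_m)<\infty$. A secondary technical point, choosing $m_0$ and the cutoffs so that the enlarged balls behave well (bounded multiplicity, and harmless overlap with $\sprt b^{(m)}$), is forced by the statement and follows from $|x_m|\to\infty$ plus the decay of $R_m^d\delta_m$ and $R_m^dc(\delta_m)$.
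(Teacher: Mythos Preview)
Your approach is essentially the same as the paper's: localize with cutoffs $\eta_m\equiv\varrho_{R_m}$ around each ball, apply form-boundedness of $b^{(m)}$ to $\eta_m w$, and use $\sum_m\delta_m=4$ to exactly absorb the leading gradient term into the dispersion, leaving IMS-type commutator errors of size $\delta_m R_m^{-2}+c(\delta_m)$ on $B(x_m,aR_m)$.

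Two places where you over-complicate things. First, the detour through ``bounded multiplicity'' of $\{B(x_m,aR_m)\}_{m\geq m_0}$ is neither justified by the hypotheses nor needed. The paper simply writes, with $C_m:=c(\delta_m)+4(d-1)\delta_mR_m^{-2}$ and $w^2\geq 1$,
\[
\sum_m C_m\langle\mathbf 1_{B(x_m,aR_m)}w^2\rangle\leq\Big(\sum_m C_m\Big)\langle w^2-1\rangle+\omega_da^d\sum_m C_mR_m^d,
\]
so $\lambda=\tfrac12\sum_m C_m$ and $G=\omega_da^d\sum_m C_mR_m^d$; both series are finite directly by the summability hypotheses (using $R_m^{d-2}\leq R_m^{-2}+R_m^d$, as you note). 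No multiplicity statement is required, and the indices $m<m_0$ need no separate treatment --- the reference to ``the device of Theorem~\ref{thm4}'' is out of place. Second, your test function $\phi=\sqrt{\cosh(u_n/c)}$ does work, but the paper's choice $w=\cosh(v/2)$ is cleaner: the identity $|\nabla v|^2\cosh v=8|\nabla\cosh(v/2)|^2+|\nabla v|^2$ leaves exactly $\|\nabla v\|_2^2$ as residual dispersion, which is what feeds directly into the Cauchy argument $(\star_1)$--$(\star\star)$. With your $\phi$ the leftover is $\langle|\nabla v|^2/\cosh v\rangle$, which still suffices (via $\|u_n\|_\infty\leq\|f\|_\infty$) but is less transparent.
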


\bigskip

\section{Proof of Theorem \ref{thm1}}

Assertion (\textit{i}) will follow from the Trotter approximation theorem, which, applied to semigroups $\{e^{-t\Lambda_n}\}_{n \geq 1}$ in $C_\infty$, can be formulated as follows:

\begin{theorem}[see {\cite[IX.2.5]{K}}]
Assume that exists $\mu_0>0$ independent of $n$ such that

\smallskip

$1$) $\sup_n\|(\mu+\Lambda_n)^{-1}f\|_\infty \leq \mu^{-1}\|f\|_\infty$, $\mu \geq \mu_0$;

\smallskip

$2$) there exists $\text{\small $s\text{-}C_\infty\text{-}$}\lim_{n} (\mu+\Lambda_n)^{-1}$ for some $\mu \geq \mu_0$;

\smallskip

$3$) $\mu (\mu+\Lambda_n)^{-1} \rightarrow 1$ in $C_\infty$ as $\mu \uparrow \infty$ uniformly in $n$.

\medskip

Then there exists a contraction strongly continuous semigroup $e^{-t\Lambda}$ on $C_\infty$ such that
$$
e^{-t\Lambda_n} \rightarrow e^{-t\Lambda} \quad \text{strongly in } C_\infty
$$
locally uniformly in $t \geq 0$.

\end{theorem}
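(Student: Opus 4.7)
The argument proceeds in four stages: first enlarging the set of resolvent parameters where convergence holds, then identifying the common limit as a genuine resolvent, then generating the semigroup via Hille--Yosida, and finally transferring convergence from resolvents to semigroups.

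First, I would promote the existence of the strong limit from the single value of $\mu$ in hypothesis $2$) to all $\mu \geq \mu_0$. For this, I combine the resolvent identity
$$
(\mu+\Lambda_n)^{-1}-(\mu'+\Lambda_n)^{-1}=(\mu'-\mu)(\mu+\Lambda_n)^{-1}(\mu'+\Lambda_n)^{-1}
$$
with the uniform bound from $1$) to show that the Neumann-type expansion of $(\mu'+\Lambda_n)^{-1}$ around any $\mu$ converges with an $n$-independent radius proportional to $\mu$. Thus the set $S$ of parameters $\mu \geq \mu_0$ for which the strong limit $R(\mu):=\text{$s$-}C_\infty\text{-}\lim_n(\mu+\Lambda_n)^{-1}$ exists is open in $[\mu_0,\infty[$; it is easily seen to be closed as well, hence $S=[\mu_0,\infty[$. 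Passing to the limit in the resolvent identity gives that $\{R(\mu)\}_{\mu\geq\mu_0}$ is a pseudoresolvent satisfying $\|R(\mu)\|_{C_\infty\to C_\infty}\leq \mu^{-1}$.

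Second, I would check that $R(\mu)$ is the resolvent of a genuine (densely defined, closed) operator $-\Lambda$ on $C_\infty$. The standard pseudoresolvent criterion reduces this to: $\mathrm{Ker}\,R(\mu)=\{0\}$ and $\overline{\mathrm{Ran}\,R(\mu)}=C_\infty$ (independent of $\mu$ by the resolvent equation). Both follow from hypothesis $3$): if $R(\mu)f=0$ then $\mu R(\mu)f=0$ for all $\mu$, contradicting $\mu R(\mu)f\to f$; and $\mu R(\mu)f\in\mathrm{Ran}\,R(\mu)$ converges to $f$ for every $f\in C_\infty$, so the range is dense. Define $\Lambda$ by $D(\Lambda):=\mathrm{Ran}\,R(\mu)$ and $\Lambda:=\mu-R(\mu)^{-1}$ on this domain; independence of $\mu$ follows from the resolvent equation.

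Third, having $\|(\mu+\Lambda)^{-1}\|\leq\mu^{-1}$ for $\mu\geq\mu_0$, I iterate the resolvent to obtain $\|(\mu+\Lambda)^{-k}\|\leq\mu^{-k}$ for all $k\geq 1$, and then invoke the Hille--Yosida--Phillips theorem to conclude that $-\Lambda$ generates a contraction strongly continuous semigroup $e^{-t\Lambda}$ on $C_\infty$.

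Finally, to upgrade resolvent convergence to semigroup convergence, I use the Yosida approximation trick. For $\lambda>\mu_0$ set $\Lambda_n^{(\lambda)}:=\lambda\Lambda_n(\lambda+\Lambda_n)^{-1}=\lambda^2(\lambda+\Lambda_n)^{-1}-\lambda$ and $\Lambda^{(\lambda)}:=\lambda\Lambda(\lambda+\Lambda)^{-1}$. Since these are bounded and $\Lambda_n^{(\lambda)}\to\Lambda^{(\lambda)}$ strongly by stage one, the exponentials $e^{-t\Lambda_n^{(\lambda)}}\to e^{-t\Lambda^{(\lambda)}}$ strongly, locally uniformly in $t$. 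The Hille--Yosida estimates give the uniform-in-$n$ approximation
$$
\|e^{-t\Lambda_n^{(\lambda)}}f-e^{-t\Lambda_n}f\|_\infty\leq t\,\|\Lambda_n^{(\lambda)}f-\Lambda_n f\|_\infty\to 0 \quad (\lambda\to\infty),\quad f\in D(\Lambda_n),
$$
and the analogous estimate for $\Lambda$; combining these via a standard $3\varepsilon$ argument yields $e^{-t\Lambda_n}\to e^{-t\Lambda}$ strongly on $C_\infty$, locally uniformly in $t\geq 0$.

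The step I expect to be most delicate is the second one, namely extracting from the purely qualitative hypothesis $3$) the triviality of the kernel and density of the range of the pseudoresolvent; it is there that the strength of the $C_\infty$-topology enters, ensuring that the limit object is a bona fide generator rather than merely a bounded pseudoresolvent.
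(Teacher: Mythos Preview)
The paper does not prove this statement at all: it is the classical Trotter approximation theorem, quoted from Kato's monograph \cite[IX.2.5]{K} and used as a black box in the proof of Theorem~\ref{thm1}. So there is no ``paper's own proof'' to compare against.

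Your outline is the standard textbook argument (essentially the one in Kato). Stages one through three are correct as written. In stage four there is a small slip: the inequality
\[
\|e^{-t\Lambda_n^{(\lambda)}}f-e^{-t\Lambda_n}f\|_\infty\leq t\,\|\Lambda_n^{(\lambda)}f-\Lambda_n f\|_\infty
\]
is stated for $f\in D(\Lambda_n)$, but these domains vary with $n$, so the $3\varepsilon$ argument as written does not give uniformity in $n$ on a fixed dense set. The usual fix is to work with $f=R(\mu)g=(\mu+\Lambda)^{-1}g$ for $g\in C_\infty$, take $f_n:=(\mu+\Lambda_n)^{-1}g\in D(\Lambda_n)$, and note that $f_n\to f$ and $\Lambda_n f_n=g-\mu f_n\to g-\mu f=\Lambda f$; on these elements the Yosida approximation is uniform in $n$, and density of $D(\Lambda)$ plus the uniform contraction bound finishes the argument. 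With that adjustment your sketch is complete.
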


\smallskip

Condition 1) follows from the classical theory, that is, from the fact that $e^{-t\Lambda_n}$ are $L^\infty$ contractions.

Condition 2) is verified as follows. In view of 1), it suffices to verify the existence of the limit on $f$ in a countable dense subset of $C_c^\infty$. Set $u_n:=(\mu+\Lambda_n)^{-1}f$. Fix $R>0$ sufficiently large so that, by Corollary \ref{a1_cor}, $\sup_{\mathbb R^d \setminus B_R(0)}|u|$ is sufficiently small uniformly in $n$. (To this end, we note that $\langle |f|^{p\theta}\rho_x\rangle$, $\langle |f|^{p\theta'}\rho_x\rangle$ in Corollary \ref{a1_cor} are small if $x \in \mathbb R^d \setminus B_R(0)$ for $R$ sufficiently large, i.e.\,$x$ is far away from the support of $f$.) Next, applying Theorem \ref{a1} and the Arzel\`{a}-Ascoli theorem on $\bar{B}_R(0)$, we obtain that there is a subsequence $n_k$ such that $\{u_{{n_k}}\}$ converges uniformly on $\bar{B}_R(0)$. Taking into account the previous observation regarding smallness of $|u_n|$ on $\mathbb R^d \setminus B_R(0)$, we use the diagonal argument to construct a subsequence $u_{{n_\ell}}$  such that the limit
$C_\infty\text{-}\lim_{\ell} (\mu+\Lambda_{{n_\ell}})^{-1}f$ exists. Finally, using the existence of the limit $\text{\small $s\text{-}L^p\text{-}$}\lim_{n} (\mu+\Lambda_
n)^{-1}f$,  $p>\frac{2}{2-\sqrt{\delta}}$, see \cite{KS}, we obtain that the subsequential limit $C_\infty\text{-}\lim_{\ell} (\mu+\Lambda_{{n_\ell}})^{-1}f$ does not depend on the choice of ${n_\ell}$. This gives us condition 2).

Let us verify condition 3). Once again, in view of 1), it suffices to verify 3) on a dense subset of $C_\infty$, e.g.\,all $g \in C_c^\infty$. We invoke the resolvent identity:
\begin{align*}
\mu (\mu+\Lambda_n)^{-1} g - \mu(\mu-\Delta)^{-1} g & = \mu (\mu+\Lambda_n)^{-1} b_n \cdot \nabla (\mu-\Delta)^{-1} g \\
& = (\mu+\Lambda_n)^{-1} b_n \cdot \mu (\mu-\Delta)^{-1} \nabla g.
\end{align*}
Since $ \mu(\mu-\Delta)^{-1} g \rightarrow g$ uniformly as $\mu \rightarrow \infty$, it suffices to show the convergence 
\begin{align}
\label{conv}
\|(\mu+\Lambda_n)^{-1} b_n \cdot \mu (\mu-\Delta)^{-1} \nabla g\|_\infty \leq \|(\mu+\Lambda_n)^{-1} |b_n| \mu (\mu-\Delta)^{-1} |\nabla g| \|_\infty \rightarrow 0 
\end{align}
as $\mu \rightarrow \infty$ uniformly in $n$. This is proved in \cite[Lemma 4]{KS} under additional hypothesis $|b| \in L^2 + L^\infty$, but the proof there can be modified to excludes this hypothesis, see \cite[Lemma 4.16]{KiS_theory}. (Alternatively, one can prove \eqref{conv} using Theorem \ref{a2} below after taking supremum in $x \in \frac{1}{2}\mathbb Z^d$ in \eqref{emb} and using the fact that $f=|\mu (\mu-\Delta)^{-1}g|$ is bounded on $\mathbb R^d$ uniformly in $\mu$.)

\bigskip

\section{Proofs of Theorems \ref{thm2}-\ref{thm5}}

\subsection{Proof of Theorem \ref{thm2}}
(\textit{i}), (\textit{ii}) 
We have 
\[
\langle \partial_t v+\lambda v -\Delta v + b_n\cdot\nabla v, e^v-e^{-v}\rangle=0 \quad \text{ where } v=e^{-\lambda t} u_n. 
\]
Let us introduce the weight function $\zeta_r(x):=\eta\big(\frac{|x|}{r}\big)$, where 
$$
\eta(t):=\left\{
\begin{array}{ll}
1 & \text{ if } \;\; t\leq 1 \\
(1-\theta(t-1)))^\frac{1}{\theta} & \text{ if } \;\; 1<t<1+\theta^{-1}, \quad 0<\theta< \frac{1}{2}\\
0 & \text{ if } \;\; 1+\theta^{-1} \leq t,
\end{array}
\right.
$$
Put $\mathcal C(r,ar)=\{y\in\mathbb R^d\mid r\leq |y|\leq ar\}$, $a=1+\theta^{-1}$. It is easy to check that
\[
|\nabla\zeta_r|\leq r^{-1}\mathbf 1_{\mathcal C(r,ar)} \text{ and } -\Delta\zeta_r \leq(d-1)r^{-2}\mathbf 1_{\mathcal C(r,ar)}.
\]

1.~A direct calculation yields (clearly, $|\nabla\zeta_M|\leq M^{-1}$, $-\Delta v\in L^1$, $|\nabla v|\in L^2\cap L^1$, $v\in L^\infty$):
 \begin{align*}
 \langle-\Delta v,e^v-e^{-v}\rangle&=\lim_{M\to\infty}\langle-\Delta v,\zeta_M(e^v-e^{-v})\rangle\\
 &=\lim_{M\to\infty}\big(\langle|\nabla v|^2,\zeta_M(e^v+e^{-v})\rangle+\langle\nabla v,(e^v-e^{-v})\nabla\zeta_M\rangle\big)\\
 &=\langle|\nabla v|^2,(e^v+e^{-v})\rangle=\langle|\nabla v|^2,(e^\frac{v}{2}-e^{-\frac{v}{2}})^2+2\rangle\\
 &=4\|\nabla(e^\frac{v}{2}+e^{-\frac{v}{2}})\|_2^2+2\|\nabla v\|_2^2.
 \end{align*}
 Therefore,
\[
\lambda\langle v (e^v-e^{-v})\rangle + \partial_t\langle e^v+e^{-v}-2\rangle+2\|\nabla v\|_2^2 + 4\|\nabla (e^\frac{v}{2}+e^{-\frac{v}{2}})\|_2^2+ 2\langle b_n(e^\frac{v}{2}+e^{-\frac{v}{2}}),\nabla (e^\frac{v}{2}+e^{-\frac{v}{2}})\rangle
=0,
\] 
so
\[
\lambda\langle v \sinh v\rangle + \partial_t\langle \cosh v-1\rangle+\|\nabla v\|_2^2+8\|\nabla\cosh \frac{v}{2}\|_2^2\leq 4\|b_n\cosh \frac{v}{2}\|_2\|\nabla\cosh\frac{v}{2}\|_2.
\]
Using our assumption on $b_n$, we write $$\|b_n\cosh\frac{v}{2}\|^2_2= \|b_n\big(\zeta_R\cosh\frac{v}{2}\big)\|^2_2\leq 4\|\nabla \big(\zeta_R\cosh\frac{v}{2}\big)\|^2_2+c(4)\|\zeta_R\cosh\frac{v}{2}\|_2^2$$ with 
$R$ such that $\sprt b_n \subset B_R$ (for this, we increase $R$ slightly, or simply redenote $R+\frac{1}{n}$ from the assumption on $b_n$ by $R$), where, setting $w:=\cosh\frac{v}{2}$, we have 
\begin{align*}
\|\nabla \big(\zeta_R\cosh\frac{v}{2}\big)\|^2 \equiv \|\nabla(\zeta_R w)\|_2^2&=\|\zeta_R\nabla w\|_2^2+\|w\nabla\zeta_R\|_2^2+\langle\zeta_R\nabla\zeta_R,\nabla w^2\rangle\\
&=\|\zeta_R\nabla w\|_2^2-\langle\zeta_R\Delta\zeta_R,w^2\rangle\\
&\leq \|\zeta_R\nabla w\|_2^2+(d-1)R^{-2}\langle\zeta_R w^2\rangle.
\end{align*}
Therefore,
\begin{align*}
4\|b_n \cosh \frac{v}{2}\|_2\|\nabla \cosh \frac{v}{2}\|_2 & \leq \|b_n \cosh \frac{v}{2}\|_2^2 + 4\|\nabla \cosh \frac{v}{2}\|_2^2 \\
& \leq 8\|\nabla \cosh \frac{v}{2}\|_2^2+c_5\langle\zeta_R w^2\rangle, \quad c_5=c(4)+4(d-1)R^{-2} 
\end{align*}
and so
\[
\lambda\langle v\sinh v\rangle+\partial_t\langle\cosh v-1\rangle + \|\nabla v\|_2^2\leq c_5\|\mathbf 1_{B_{aR}}\cosh \frac{v}{2}\|_2^2.
\] 
Since $v\sinh v\geq \cosh v-1=2(\cosh^2\frac{v}{2}-1)$,
\[
 (\lambda-2^{-1}c_5)\langle v\sinh v\rangle+\partial_t\langle\cosh v-1\rangle + \|\nabla v\|_2^2\leq c_5\|\mathbf 1_{B_{aR}}\|_1,
 \]
  or setting $\lambda=2^{-1}c_5$ and changing $v$ to $\frac{v}{c}$, $c>0$,  
\[
\langle\cosh \frac{v(t)}{c}-1\rangle +\int_0^t\|\nabla \frac{v(s)}{c}\|_2^2ds\leq\langle\cosh \frac{f}{c}-1\rangle+tc_5\|\mathbf 1_{B_{aR}}\|_1. \tag{$\star$}
\] 
From $(\star)$ we obtain that $\int_0^t\|\nabla v(s)\|_2^2ds\leq c_1^2(\langle\cosh \frac{f}{c_1}-1\rangle+tc_5\|\mathbf 1_{B_{aR}}\|_1)$ with $c_1=\|f\|_\Phi$.
 Therefore,
\[
\int_0^t\|\nabla v(s)\|_2^2ds\leq (1+tc_5\|\mathbf 1_{B_{aR}}\|_1)\|f\|_\Phi^2.\tag{$\star_1$}
\]
From $(\star)$ we obtain also the inequality
\[
\langle\cosh \frac{v(\frac{t}{m})}{c}-1\rangle\leq \langle\cosh \frac{f}{c}-1\rangle+c_5\|\mathbf 1_{B_{aR}}\|_1\frac{t}{m},\quad m=1,2,\dots \tag{$\star_2$}
\]

\medskip

2.~Set $c=\frac{\|f\|_\Phi}{1-\gamma_m}$, $m\geq m_0$, where $\gamma_m=c_5\|\mathbf 1_{B_{aR}}\|_1\frac{t}{m}$ and $\gamma_{m_0}<1$. Then,
due to 
\[
\langle\cosh \frac{f}{(1-\gamma_m)c}-1\rangle\leq 1 \text{ and inequality } \cosh \frac{f}{c}-1\leq(1-\gamma_m)\big(\cosh \frac{f}{(1-\gamma_m)c}-1\big),
\]
we obtain from $(\star_2)$ that
\[
\big\langle\cosh \frac{v(\frac{t}{m})}{c}-1\big\rangle\leq 1-\gamma_m+\gamma_m=1,\quad \text{i.e. } \big\|v(\frac{t}{m})\big\|_\Phi\leq c=\frac{1}{1-\gamma_m}\|f\|_\Phi.
\]
Therefore, setting $G=c_5\|\mathbf 1_{B_{aR}}\|_1$, and using semigroup property of $v(t)$, we arrive at
\[
\|v(t)\|_\Phi\leq (1-G\frac{t}{m})^{-m}\|f\|_\Phi
\]
and so
\[
\|v(t)\|_\Phi\leq e^{Gt}\|f\|_\Phi.
\]
Thus, setting $T^t_nf:= u_n(t)$,
\begin{equation}
\label{T_n_conv}
 \|T_n^tf\|_\Phi\leq e^{(2^{-1}c_5+G)t}\|f\|_\Phi.
\end{equation}
Thus, every $T_n^t$ admits extension by continuity from $C_c^\infty$ to $L_\Phi$, which we denote again by $T_n^t$.
We have $\lim_{t\downarrow 0}\|T_n^t f-f\|_\Phi=0$ for all $f \in C_c^\infty$. Since $n$ is finite, the latter is evident from the classical theory, which allows to pass to the limit in $n$ under the gague norm of $T_n^tf-f$. Now, combined with \eqref{T_n_conv},
this yields $$s\mbox{-}L_\Phi\mbox{-}\lim_{t\downarrow 0}T^t_n=1, \quad n \geq 1,$$ i.e.\,semigroups $T_n^t$ are strongly continuous. (So, we can write $T_n^t=e^{-t\Lambda_n}$, where generator $\Lambda_n$ should be considered as appropriate operator realization of $-\Delta + b \cdot \nabla$ in $L_\Phi$. For the sake of uniformity, however, we will continue to use notation $T^t_n$ throughout the rest of the proof.)

\medskip

3.~Next, we claim that $\{T^t_nf\}$ is a Cauchy sequence in $L^\infty([0,T],\mathcal L_\Phi)$ and in $L^2([0,T], W^{1,2}(\mathbb R^d))$. Indeed, set $h=\frac{v_n-v_k}{c}$, $c>0$. Then
\[
\lambda h + \partial_t h -\Delta h + b_n\cdot\nabla h = c^{-1}(b_k-b_n)\cdot\nabla v_k, \;\;h(0)=0,
\]
so
\[
\sup_{0\leq s\leq t}\langle\cosh h(s)-1\rangle+\int_0^t\|\nabla h(s)\|_2^2ds\leq c_5\langle\mathbf 1_{B_{aR}}\rangle  t + c^{-1}e^{2c^{-1}\|f\|_\infty}\int_0^t\langle|b_k-b_n||\nabla v_k(s)|\rangle ds. \tag{$\star\star$}
\]
We estimate, using ($\star_1$),
\begin{align*}
\int_0^t\langle|b_k-b_n||\nabla v_k(s)|\rangle ds & \leq\bigg(\int_0^t\|b_k-b_n\|_2^2ds\bigg)^\frac{1}{2}\bigg(\int_0^t\|\nabla v_k(s)\|_2^2ds\bigg)^\frac{1}{2}\\
&\leq \sqrt{t}\|b_k-b_n\|_2(1+tG)^\frac{1}{2}\|f\|_\Phi.
\end{align*}
Thus, for every fixed $c>0$, $t>0$,
\[
\lim_{n,k\to\infty}\sup_{0\leq s\leq t}\langle\cosh h(s)-1\rangle+\lim_{n,k\to\infty}\int_0^t\|\nabla h(s)\|_2^2ds\leq c_5\langle\mathbf 1_{B_{aR}}\rangle t. 
\]
In particular, $\lim_{n,k\to\infty}\int_0^t\|\nabla (v_n(s)-v_k(s))\|_2^2ds\leq c^2 c_5\langle\mathbf 1_{B_{aR}}\rangle t$ for any $c>0$, i.e.
\[
\lim_{n,k\to\infty}\int_0^t\|\nabla v_n(s)-\nabla v_k(s)\|_2^2ds=0.
\] 

Now fix $t_0$ by $c_5\langle\mathbf 1_{B_{aR}}\rangle t_0\leq 1$, then
\[
\lim_{n,k\to\infty}\sup_{0\leq s\leq t_0}\langle\cosh h(s)-1\rangle\leq 1 \text{ for any } c>0.
\]
 The latter means that $\lim_{n,k\to\infty}\sup_{0\leq s\leq t_0}\|v_n(s)-v_k(s)\|_\Phi=0$. The claim is established.

\medskip

4. Set $T^t f:=L_\Phi\mbox{-} \lim_n T^t_n f$, $f \in C_c^\infty$. Then, clearly, by \eqref{T_n_conv}
$$
 \|T^tf\|_\Phi\leq e^{(2^{-1}c_5+G)t}\|f\|_\Phi.
$$
We extend $T^t$ by continuity from $C_c^\infty$ to $L_\Phi$.
Then, clearly, $T^{t+s}=T^tT^s$,
 \[
s\mbox{-}L_\Phi\mbox{-}\lim_{t\downarrow 0}T^t=1.
 \]
This is the sought semigroup $e^{-t\Lambda_\Phi}:=T^t$.
Moreover, in view of ($\star_1$), we have
$$
T^tg\in L^2([0,T], W^{1,2}(\mathbb R^d))\quad g\in L_\Phi.
$$
The weak solution characterization of $u=T^t g$ now follows right away from the convergence results established above. The proof of (\textit{i}), (\textit{ii}) is completed.

\medskip

(\textit{iii}) This uniqueness result follows right away from the construction of the semigroup $T^t$ by verifying Cauchy's criterion.  \hfill \qed

\subsection{Proof of Theorem \ref{thm3}}

Set $v:=e^{-t(\lambda+\Lambda(b_n)}f$. We have $$\lambda\langle v\sinh v\rangle+\partial_t\langle \cosh v-1\rangle+\|\nabla v\|_2^2+ 8\|\nabla \cosh \frac{v}{2}\|_2^2=-4\langle b_n\cosh \frac{v}{2},\nabla \cosh \frac{v}{2}\rangle.$$        
Put $w:=\cosh \frac{v}{2}$. Let us first establish the estimate
\begin{equation}
\label{star}
4|\langle b_nw,\nabla w\rangle|\leq 8\|\nabla w\|_2^2+c_5\langle\zeta_{R_1}w^2\rangle+\|b^{(2)}w\|_2^2. 
\end{equation}
Writing $b_n^{(1)}=(b_{n,1}^{(1)}, b_{n,2}^{(1)},\dots,b_{n,d}^{(1)})$, $b^{(2)}=(b_1^{(2)}, b_2^{(2)},\dots, b_d^{(2)})$ and using the assumptions and inequality $4|\alpha\beta|\leq |\alpha|^2+4\|\beta|^2$, we have
\[
\langle b_{n,i}^{(1)}w,\nabla_iw\rangle=\langle b_{n,i}^{(1)}\zeta_{R_1}w,\mathbf 1_{\sprt b_{n,i}^{(1)}} \nabla_iw\rangle,\quad \langle b_i^{(2)}w,\nabla_iw\rangle=\langle b_i^{(2)}w,\mathbf 1_{\sprt b_i^{(2)}}\nabla_iw\rangle,
\]
so
\begin{align*}
4|\langle b_nw,\nabla w\rangle|&\leq \|b_n^{(1)}\zeta_{R_1}w\|_2^2+\|b^{(2)}w\|_2^2+4\sum_{i=1}^d\langle(\mathbf 1_{\sprt b_{n,i}^{(1)}}+\mathbf 1_{\sprt b_i^{(2)}})|\nabla_i w|^2\rangle\\
&\leq \|b_n^{(1)}\zeta_{R_1}w\|_2^2+\|b^{(2)}w\|_2^2+4\|\nabla w\|_2^2\\
&\leq 4\|\nabla(\zeta_{R_1}w)\|_2^2+c_4\|\zeta_{R_1}w\|_2^2 + \|b^{(2)}w\|_2^2+4\|\nabla w\|_2^2.
\end{align*}
Recalling that $\|\nabla(\zeta_{R_1}w)\|_2^2\leq\|\nabla w\|_2^2+(d-1)R_1^{-2}\langle\zeta_{R_1}w^2\rangle$, we arrive at \eqref{star}.

The proof of the crucial bounds $$\int_0^t\|e^{-\lambda s}\nabla T^s_nf\|_2^2ds\leq (1+tG)\|f\|_\Phi^2, \quad \|T^t_nf\|_\Phi\leq e^{(\lambda+G)t}\|f\|_\Phi$$ follows the proof of Theorem \ref{thm2}, i.e.\,using \eqref{star} we obtain inequality
\[
\lambda\langle v\sinh v\rangle+\partial_t\langle\cosh v-1\rangle+\|\nabla v\|_2^2\leq (c_4+(d-1)2R_1^{-2})\langle\mathbf 1_{B_{aR_1}}\cosh^2\frac{v}{2} \rangle+\langle|b^{(2)}|^2\cosh^2\frac{v}{2}\rangle,
\]
and hence inequality $\partial_t\langle\cosh v-1\rangle+\|\nabla v\|_2^2\leq  G.$ Integrating the latter over $[0,t]$, we have
\[
\langle\cosh v(t)-1\rangle+\int_0^t\|\nabla v(s)\|_2^2ds\leq \langle\cosh f-1\rangle+ G t.
\]
The rest of the proof essentially repeats the proof of Theorem \ref{thm2}. \hfill \qed

\subsection{Proof of Theorem \ref{thm4}}
We start with identity 
\[
\lambda^\prime\langle v \sinh v\rangle + \partial_t\langle \cosh v-1\rangle +\|\nabla v\|_2^2 + 8\|\nabla \cosh\frac{v}{2}\|_2^2=-\langle b_n\cdot\nabla (\cosh v-1)\rangle,\quad v=e^{-t(\lambda^\prime+\Lambda(b_n))}f.
\]
Let us estimate $-\langle b_n\cdot\nabla (\cosh v-1)\rangle$ from above.
 Define $\hat{\eta}(t)$ to be $1$ if $t\leq 1$, $2-t$ if $1<t<2$ and $0$ if $t\geq 2$. Set $\eta_R(x)=\hat{\eta}(\frac{|x|}{R})$. Then
\begin{align*}
-\langle b_n\cdot\nabla (\cosh v-1)\rangle&=-\lim_{R\rightarrow\infty}\langle\eta_R b_n\cdot\nabla (\cosh v-1)\rangle\\
&=\lim_R\langle\eta_R {\rm div}\, b_n,\cosh v-1\rangle+\lim_R\langle\nabla\eta_R,b_n(\cosh v-1)\rangle\\
&\leq \langle E_nV_1,\cosh v-1\rangle+\langle E_nV_2,\cosh v-1\rangle;
\end{align*}
\begin{align*}
 \langle E_nV_1,\cosh v-1\rangle=&2\langle V_1,E_n(\cosh^2\frac{v}{2}-1)\rangle =-2\langle V_1\rangle+2\big\|V_1^\frac{1}{2}(\zeta_{R_1}\sqrt{E_n\cosh^2\frac{v}{2} })\big\|_2^2\\
 &\leq 8\big\|\nabla\big(\zeta_{R_1}\sqrt{E_n\cosh^2\frac{v}{2}}\big)\big\|_2^2+2c_4\big\langle\zeta_{R_1}E_n\cosh^2 \frac{v}{2}\big\rangle
\end{align*}
and setting $w=\cosh\frac{v}{2}$
\begin{align*}
\big\|\nabla(\zeta_{R_1}\sqrt{E_nw^2 })\big\|_2^2&=\big\|\zeta_{R_1}\nabla\sqrt{E_nw^2}\big\|_2^2-\big\langle\zeta_{R_1}\Delta\zeta_{R_1},E_nw^2\big\rangle\\
&\leq \|\zeta_{R_1}\nabla \sqrt{E_nw^2}\|_2^2+(d-1)R_1^{-2}\langle E_n\zeta_{R_1},w^2\rangle\\
\bigg(\text{we are using } &\big|\nabla\sqrt{E_nw^2}\big|=\frac{\big|E_n(w\nabla w)\big|}{\sqrt{E_nw^2}}\leq\sqrt{E_n|\nabla w|^2}\bigg)\\
&\leq \big\|\nabla w\big\|_2^2+(d-1)R_1^{-2}\big\langle E_n\zeta_{R_1},w^2\big\rangle.
\end{align*}
Thus,
 $-\langle b_n\cdot\nabla (\cosh v-1)\rangle\leq 8\|\nabla \cosh\frac{v}{2}\|_2^2+[2c_4+(d-1)8R_1^{-2}]\langle E_n\zeta_{R_1},\cosh^2 \frac{v}{2}\rangle+\langle E_nV_2,\cosh v-1\rangle$ and the inequality
 \[
\lambda^\prime\langle v \sinh v\rangle + \partial_t\langle \cosh v-1\rangle +\|\nabla v\|_2^2\leq [2c_4+(d-1)8R_1^{-2}]\langle E_n\zeta_{R_1},\cosh^2 \frac{v}{2}\rangle+\langle E_nV_2,\cosh v-1\rangle
\]
is derived and yields (with $\lambda^\prime=\lambda+\|V_2\|_\infty$)
\[
\partial_t\langle \cosh v-1\rangle+\|\nabla v\|_2^2\leq G.
\]
The rest of the proof is practically identical to the proof of Theorem \ref{thm2}. \hfill \qed

\begin{remark}
\label{thm4_rem}
As we noted earlier, one can remove condition $|b|\in L^2$ in Theorem \ref{thm4} by considering $\tilde{b}=b+\mathsf{f}$, where $b$ satisfies the assumptions of Theorem \ref{thm4}, and $|\mathsf{f}|\in L^\infty$, ${\rm div\,} \mathsf{f}\in L^1_\loc$, $V_3:=0\vee {\rm div\,} \mathsf{f}\in L^\infty$.
Indeed, set $\tilde{b}_n=b_n+\mathsf{f}$ and let $v=e^{-t(\lambda^\prime+\Lambda(\tilde{b}_n))}f$, where $\lambda^\prime=\lambda+\|V_2\|_\infty +\|V_3\|_\infty$. Then
\[
\partial_t\langle \cosh v-1\rangle+\|\nabla v\|_2^2\leq G,
\]
\[
\int_0^t\langle|\tilde{b}_k-\tilde{b}_n||\nabla v_k(s)|\rangle ds\rightarrow 0 \text{ as } k,n\rightarrow\infty
\]
due to $|\tilde{b}_k-\tilde{b}_n|=|b_k-b_n|$.
\end{remark}

\subsection{Proof of Theorem \ref{thm5}}
Clearly, we are left to estimate $4\|b_nw\|_2\|\nabla w\|_2$, where $b_n=b\mathbf 1_{|b|\leq n}$, $w=\cosh \frac{v}{2}$, and $v=e^{-t\lambda}u_n$, as follows. Set $\varrho_{R_m}(x)=\zeta_{R_m}(x-x_m)$. We have
\[
\|b_nw\|_2^2=\sum_{m=1}^\infty\|b_n^{(m)}\varrho_{R_m}w\|_2^2\leq\sum_{m=1}^\infty\delta_m\|\nabla(\varrho_{R_m}w)\|_2^2+\sum_{m=1}^\infty c(\delta_m)\|\varrho_{R_m}w\|_2^2,
\]
\begin{align*}
\|\nabla(\varrho_{R_m}w)\|_2^2&=\|\varrho_{R_m}\nabla w\|_2^2+\|w\nabla\varrho_{R_m}\|_2^2+\langle\varrho_{R_m}\nabla\varrho_{R_m},\nabla w^2\rangle\\
&=\|\varrho_{R_m}\nabla w\|_2^2-\langle\varrho_{R_m}\Delta\varrho_{R_m},w^2\rangle\\
&\leq \|\nabla w\|_2^2 + (d-1)R_m^{-2}\langle\varrho_{R_m}w^2\rangle,\\
\sum_{m=1}^\infty\delta_m\|\nabla(\varrho_{R_m}w)\|_2^2&\leq 4\|\nabla w\|_2^2+(d-1)\sum_{m=1}^\infty\delta_mR_m^{-2}\langle\varrho_{R_m}w^2\rangle,\\
4\|b_nw\|_2\|\nabla w\|_2&\leq 8\|\nabla w\|_2^2+\sum_{m=1}^\infty C_m\langle\varrho_{R_m}w^2\rangle, \quad C_m=c(\delta_m)+4(d-1)\delta_mR_m^{-2}.
\end{align*}
Finally,
\begin{align*}
\sum_{m=1}^\infty C_m\langle\varrho_{R_m}w^2\rangle&\leq\sum_{m=1}^\infty C_m\langle\mathbf 1_{B(x_m,aR_m)}w^2\rangle\\
&\leq \langle w^2-1\rangle \sum_{m=1}^\infty C_m + \omega_da^d\sum_{m=1}^\infty C_mR^d.
\end{align*} 
\hfill \qed

\bigskip

\appendix

\section{H\"{o}lder continuity of solutions and embedding theorems}

Throughout this section, $b \in \mathbf{F}_\delta$, $\delta<4$. We use notations introduced in the previous sections: $b_n=E_{\varepsilon_n} b$, $\varepsilon_n \downarrow 0$, and $$\Lambda_n:=-\Delta + b_n \cdot \nabla.$$

\begin{theorem}[{\cite[Theorem 5]{Ki_multi}}]
\label{a1}
The classical solution $u=u_n$ to non-homogeneous equation
\begin{equation}
\label{eq22}
\big(\mu+\Lambda_n\big)u=f, \quad f \in C_c^\infty, \quad \mu>0,
\end{equation}
is H\"{o}lder continuous in every ball $B_1(x)$ with constants that do not depend on $n$ (i.e.\,boundedness or smoothness of $b_n$) or $x \in \mathbb R^d$.
\end{theorem}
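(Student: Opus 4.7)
The strategy is to run De Giorgi's method in $L^p$ for $p>\frac{2}{2-\sqrt{\delta}}$, which is permissible since $\delta<4$. Testing the equation $(\mu+\Lambda_n)u=f$ against $\phi:=\eta^2((u-k)^+)^{p-1}$, where $\eta$ is a smooth cutoff supported in a ball $B_r$ and $k\geq 0$ is a level, integrating by parts on $-\Delta u$, and estimating the drift contribution
$|\langle b_n\cdot \nabla((u-k)^+),\eta^2((u-k)^+)^{p-1}\rangle|$
by the elementary inequality $2|\xi\eta|\leq \alpha\xi^2+\alpha^{-1}\eta^2$ (which produces, in the end, an expression $\alpha\langle|b_n|^2,\eta^2((u-k)^+)^p\rangle+\alpha^{-1}\langle\eta^2|\nabla((u-k)^+)^{p/2}|^2\rangle$ up to lower-order cutoff terms), we invoke the form-bound $b_n\in\mathbf F_\delta$ applied to $\eta((u-k)^+)^{p/2}$. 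Exactly as in the elementary $L^p$ calculation at the end of the introduction, the threshold $p>\frac{2}{2-\sqrt{\delta}}$ is precisely what ensures that the resulting gradient term on the right can be absorbed into the dispersion term on the left. This yields a Caccioppoli-type estimate
\[
\|\eta\,\nabla((u-k)^+)^{p/2}\|_2^2\;\leq\; C_1\,\|\eta((u-k)^+)^{p/2}\|_2^2\;+\;C_2\,\langle\eta^2((u-k)^+)^{p-1}|f|\rangle,
\]
where $C_1=C_1(d,p,\delta,c(\delta),\mu,\|\nabla\eta\|_\infty)$ and $C_2=C_2(d,p,\delta)$. Since $b_n\in\mathbf F_\delta$ with the same constants as $b$, neither $C_1$ nor $C_2$ depends on $n$.

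From this estimate one runs the standard De Giorgi iteration on nested balls $B_{r_j}\downarrow B_{r/2}$ and geometrically increasing levels $k_j\uparrow k_\infty$, using the Sobolev embedding $W^{1,2}\hookrightarrow L^{2d/(d-2)}$ to supply the gain-of-integrability needed to close the scheme. The output is a local $L^\infty$ bound on $u$ in terms of $\|u\|_{L^p(B_r)}$ and $\|f\|_{L^q(B_r)}$ for some $q$ sufficiently large, with constants uniform in $n$. The $L^p_\loc$ control on $u$ feeding into this estimate is furnished separately by the $L^\infty$ resolvent contraction combined with the $L^p\to L^q$ smoothing of Theorem \ref{thm1}(\textit{iv}), itself uniform in $n$. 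Local H\"older continuity then follows by the classical De Giorgi oscillation-decay mechanism: applied in turn to $u-\inf_{B_r}u$ and $\sup_{B_r}u-u$ on geometrically shrinking balls, the preceding $L^\infty$ estimate yields a decay
\[
\mathrm{osc}_{B_{r/2}}u\;\leq\;\theta\,\mathrm{osc}_{B_r}u\;+\;C r^\beta\|f\|_q
\]
with $\theta\in(0,1)$ and $\beta>0$ depending only on $d,\delta,c(\delta)$; iterating on a geometric sequence of radii produces the desired H\"older modulus.

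The main obstacle is preserving the form-bound with the same constants $\delta,c(\delta)$ under the scalings used throughout the argument. Under the dilation $u(x)\mapsto u(x_0+rx)$ with $r<1$, the drift becomes $\tilde b_n(x)=r\,b_n(x_0+rx)$, and a short verification shows that $\tilde b_n\in\mathbf F_\delta$ with $\tilde c(\delta)=r^2c(\delta)\leq c(\delta)$ --- so the critical scaling invariance of $\mathbf F_\delta$ is exactly what keeps the energy inequality constants uniform across scales. A second point one must monitor is that the test function $\eta^2((u-k)^+)^{p-1}$ is legitimate, which requires $u\in W^{1,2}_\loc\cap L^\infty_\loc$; this is granted for each fixed $n$ because $b_n$ is smooth and bounded, but the crucial point is that the quantitative constants $C_1,C_2,\theta,\beta$ extracted above never appeal to this smoothness, so the H\"older modulus survives the passage $n\to\infty$.
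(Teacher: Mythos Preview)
Your overall strategy matches the paper's: both run De Giorgi's method in $L^p$ for $p>\frac{2}{2-\sqrt{\delta}}$, deriving a Caccioppoli inequality for $v=(u-k)_+$ by testing against $\eta^2 v^{p-1}$, absorbing the drift via the form-bound (this is precisely where $p>\frac{2}{2-\sqrt{\delta}}$ enters), and then iterating on nested balls and levels to get a local $\sup$ bound with the extra density factor $\bigl(|B_R\cap\{u>0\}|/|B_R|\bigr)^{\alpha/p}$. Your Caccioppoli estimate and local boundedness step are essentially the paper's Propositions~\ref{c_prop} and~\ref{prop71}.

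The gap is in your oscillation-decay paragraph. You write that the $L^\infty$ estimate ``applied in turn to $u-\inf_{B_r}u$ and $\sup_{B_r}u-u$'' yields $\mathrm{osc}_{B_{r/2}}u\leq\theta\,\mathrm{osc}_{B_r}u+Cr^\beta$ with $\theta<1$. This does not follow from the local boundedness alone. What produces $\theta<1$ is a separate \emph{density lemma} (the paper's Proposition~\ref{osc_prop}): if $|B_R\cap\{u>k_0\}|\leq\gamma|B_R|$ and the oscillation is not already small, then after $n$ halvings of the gap one has $|B_R\cap\{u>k_n\}|\leq c\,n^{-d/(2(d-1))}|B_R|$. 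This is proved via a De~Giorgi--Poincar\'e isoperimetric argument on the truncation $w$ between levels $k_{i-1}$ and $k_i$, feeding the Caccioppoli estimate back in to control $\|\nabla(u-h)_+^{p/2}\|_{L^2(B_R)}$. Only after choosing $n$ large enough to make the density factor $\leq\tfrac12$ does the $\sup$ bound give $M(R/2)-k_n\leq\tfrac12(M(2R)-k_n)+CR^{2/p}$, from which the oscillation inequality follows. Without this lemma there is no mechanism to make $\theta$ strictly less than $1$, so your sketch as written is incomplete at exactly the step where H\"older continuity is decided.

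A minor remark: your discussion of scaling invariance of $\mathbf F_\delta$ is correct but not how the paper proceeds; the paper works directly on balls of radius $R\leq R_0\leq 1$ with the Caccioppoli constants already uniform in $R$, so no rescaling is invoked.
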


\begin{theorem}[{special case of \cite[Theorem 6]{Ki_multi}}]
\label{a2}
Let $u=u_n$ denote the classical solution to non-homogeneous equation
\begin{equation}
\label{eq7}
(\mu+\Lambda_n)u=|b_n|f, \quad f \in C \cap L^1.
\end{equation}
Then for fixed $1<\theta<\frac{d}{d-2}$ and $p>\frac{2}{2-\sqrt{\delta}}$, $p \geq 2$, there exist constants  $\mu_1>0$, $\kappa$, $C$ and $\beta \in ]0,1[$ independent of $n$ such that, for every $x \in \mathbb R^d$,
\begin{align}
\sup_{B_\frac{1}{2}(x)}|u| & \leq C \biggl((\mu-\mu_1)^{-\frac{1}{p\theta}} \langle \big(\mathbf{1}_{|b_n|>1} + |b_n|^{p\theta}\mathbf{1}_{|b_n| \leq 1}\big)|f|^{p\theta}\rho_x\rangle^{\frac{1}{p\theta}} \notag \\
& + \mu^{-\frac{\beta}{p}}\langle \big(\mathbf{1}_{|b_n|>1} + |b_n|^{p\theta'}\mathbf{1}_{|b_n| \leq 1}\big)|f|^{p\theta'}\rho_x\rangle^{\frac{1}{p\theta'}} \biggr) \label{emb}
\end{align}
for all $\mu>\mu_1$, where $\rho_x(y):=\rho(y-x)$, $
\rho(y)=(1+\kappa|y|^{2})^{-\frac{d}{2}-1}$, $y \in \mathbb R^d$.

It follows that
\begin{align*}
\|u\|_\infty \leq C  \sup_{x \in \frac{1}{2}\mathbb Z^d}\biggl( & (\mu-\mu_0)^{-\frac{1}{p\theta}}\big\langle \big(\mathbf{1}_{|b_n|>1} + |b_n|^{p\theta}\mathbf{1}_{|b_n| \leq 1}\big) |f|^{p\theta}\rho_x\big\rangle^{\frac{1}{p\theta}} \\
&+ \mu^{-\frac{\beta}{p}}\big\langle \big(\mathbf{1}_{|b_n|>1} + |b_n|^{p\theta'}\mathbf{1}_{|b_n| \leq 1}\big)|f|^{p\theta'}\rho_x\big\rangle^{\frac{1}{p\theta'}} \biggr).
\end{align*}
\end{theorem}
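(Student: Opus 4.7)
My plan is to obtain the pointwise estimate \eqref{emb} by running De Giorgi's iteration in $L^p$ (with $p>\frac{2}{2-\sqrt{\delta}}$, $p \geq 2$) against the Bessel-type weight $\rho_x$, in the spirit of Section 4 of this paper. The central object is the weighted energy inequality obtained by testing the equation $(\mu+\Lambda_n)u=|b_n|f$ against $(u-k)_+^{p-1}\rho_x$ for truncation levels $k \geq 0$. After integration by parts, one gets a principal gradient term $\frac{4(p-1)}{p^2}\langle|\nabla(u-k)_+^{p/2}|^2 \rho_x\rangle$, a mass term $\mu\langle(u-k)_+^p\rho_x\rangle$, a drift term, and two kinds of error: one from $\nabla\rho_x$, $\Delta\rho_x$, and one from the source $\langle|b_n|f,(u-k)_+^{p-1}\rho_x\rangle$. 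Choosing $\rho(y)=(1+\kappa|y|^2)^{-\frac{d}{2}-1}$ guarantees the pointwise bounds $|\nabla\rho_x|^2\rho_x^{-1}\leq C\kappa\rho_x$ and $|\Delta\rho_x|\leq C\kappa\rho_x$, so that the weight errors are controlled by $C\kappa\langle(u-k)_+^p\rho_x\rangle$ and absorbed into the mass term once $\mu>\mu_1:=\mu_1(\kappa,d,\delta)$.

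To handle the drift term, I apply the quadratic inequality with parameter $\alpha=1/\sqrt{\delta}$ and then the form-boundedness of $b_n$ (inherited from $b$, uniformly in $n$) against the test function $\varphi=(u-k)_+^{p/2}\sqrt{\rho_x}\in W^{1,2}$, producing
\[
\frac{2}{p}|\langle b_n\cdot\nabla(u-k)_+^{p/2},(u-k)_+^{p/2}\rho_x\rangle|\leq \frac{2\sqrt{\delta}}{p}\|\nabla\varphi\|_2^2+\frac{c(\delta)}{p\sqrt{\delta}}\|\varphi\|_2^2.
\]
The coefficient $\frac{4(p-1)}{p^2}-\frac{2\sqrt{\delta}}{p}$ on the gradient term is strictly positive precisely when $p>\frac{2}{2-\sqrt{\delta}}$ (plus the usual cross-terms picked up when replacing $\|\nabla\varphi\|_2^2$ by $\langle|\nabla(u-k)_+^{p/2}|^2\rho_x\rangle$, which are absorbed exactly as the weight errors). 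The source is split using the truncation identity $\mathbf{1}_{|b_n|>1}+|b_n|^{p\theta}\mathbf{1}_{|b_n|\leq 1}=(\min(|b_n|,1))^{p\theta}$: on $\{|b_n|\leq 1\}$ one writes $|b_n||f|\leq |b_n|^{1-\varepsilon}(|b_n|^\varepsilon|f|)$ and applies Young (or one simply bounds $|b_n|\leq|b_n|^{p\theta/(p\theta-1)}$ in this regime, up to a mass term), whereas on $\{|b_n|>1\}$ a uniform Hölder bound against $\rho_x$ combined with the form-bound of $b$ (applied once to control an extra factor of $|b_n|^2$) is sufficient; this is precisely where the characteristic function $\mathbf{1}_{|b_n|>1}$ appears in \eqref{emb}.

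What remains is the standard De Giorgi machinery. A weighted Sobolev embedding $\|(u-k)_+^{p/2}\sqrt{\rho_x}\|_{L^{2\theta}}\leq C\|\nabla((u-k)_+^{p/2}\sqrt{\rho_x})\|_2$, valid for any $1<\theta<\frac{d}{d-2}$, converts the resulting energy inequality into the iterative estimate
\[
\langle (u-k)_+^{p\theta}\rho_x\rangle^{1/\theta}+(\mu-\mu_1)\langle(u-k)_+^p\rho_x\rangle\leq C\langle \big(\mathbf{1}_{|b_n|>1}+|b_n|^{p\theta}\mathbf{1}_{|b_n|\leq 1}\big)|f|^{p\theta}\rho_x\rangle^{1/\theta}+\dots
\]
Iterating with levels $k_j=k(1-2^{-j})$ and applying De Giorgi's nonlinear absorption lemma on $B_{1/2}(x)$ yields the first term of \eqref{emb}, with the factor $(\mu-\mu_1)^{-1/(p\theta)}$ coming from balancing the mass term against the source. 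The second term, with the improved $\mu^{-\beta}$ decay and $\theta'$ in place of $\theta$, is obtained by a parallel iteration after first trading one half of the Sobolev gain for a resolvent-type estimate $\|(\mu-\Delta)^{-s}\|_{p\rightarrow p}\leq\mu^{-s}$ with a small $s$; this is what produces the fractional exponent $\beta\in\,]0,1[$.

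\textbf{Main obstacle.} The delicate point is the interaction between the weight $\rho_x$ and the form-boundedness inequality: applying $b\in\mathbf{F}_\delta$ to $\varphi=(u-k)_+^{p/2}\sqrt{\rho_x}$ creates a gradient $\nabla\varphi$ in which the piece falling on $\sqrt{\rho_x}$ must be controlled separately and then absorbed using $\mu>\mu_1$ without degrading the critical coefficient $\frac{4(p-1)}{p}-2\sqrt{\delta}$. Keeping this coefficient uniformly positive and uniform in $n$ (since $b_n\in\mathbf{F}_\delta$ with the same $\delta,c(\delta)$), together with the simultaneous extraction of two distinct $\mu$-decay rates $(\mu-\mu_1)^{-1/(p\theta)}$ and $\mu^{-\beta}$, is the technical crux; everything else is standard De Giorgi bookkeeping.
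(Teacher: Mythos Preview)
Your overall strategy---De Giorgi in $L^p$ combined with the weight $\rho_x$---is the right one, but the organization you propose differs from the paper's and, as written, contains a gap.

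The paper does \emph{not} run the De Giorgi iteration against the weight $\rho_x$. Instead it proceeds in two decoupled steps. First (Proposition~\ref{nonhom_prop}), it runs an \emph{unweighted} De Giorgi iteration on the nested balls $B_{R_m}\downarrow B_{1/2}$, starting from the Caccioppoli inequality of Proposition~\ref{c_prop2}; this yields
\[
\sup_{B_{1/2}} u_+ \leq K\Bigl(\langle u_+^{p\theta}\mathbf{1}_{B_1}\rangle^{1/p\theta}+\mu^{-\beta}\langle\Theta^{\theta'}|f|^{p\theta'}\mathbf{1}_{B_1}\rangle^{1/p\theta'}\Bigr).
\]
The exponent $\beta$ here is \emph{not} obtained from a resolvent bound $\|(\mu-\Delta)^{-s}\|_{p\to p}\leq\mu^{-s}$ as you suggest; it is simply the interpolation parameter between the mass term $\mu\|v\|_{L^p}^p$ and the Sobolev term $\|v\|_{L^{pd/(d-2)}}^p$, producing $\mu^{p\beta}\|v\|_{L^{p\theta_0}}^p$ with $\theta<\theta_0<\frac{d}{d-2}$. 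The $\theta'$ arises by H\"older when separating the source from the super-level set, $\|\Theta^{1/p}f\mathbf{1}_{u>k}\|_{L^p}^p\leq H\,|B_m\cap\{u>k\}|^{1/\theta}$ with $H=\langle\Theta^{\theta'}|f|^{p\theta'}\rangle^{1/\theta'}$. Second (Proposition~\ref{rho_prop}), a single global weighted energy estimate---no iteration, just testing against $u|u|^{p\theta-2}\rho_x$---controls $\langle|u|^{p\theta}\rho_x\rangle$ by the source with factor $(\mu-\mu_0)^{-1}$. The two are glued via the pointwise bound $\rho_x\geq c\,\mathbf{1}_{B_1(x)}$.

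Your plan to carry $\rho_x$ through the De Giorgi iteration itself runs into a concrete obstruction: applying Sobolev to $\varphi=(u-k)_+^{p/2}\sqrt{\rho_x}$ gives $\|\varphi\|_{2\theta}^{2}=\langle (u-k)_+^{p\theta}\rho_x^{\theta}\rangle^{1/\theta}$, not $\langle (u-k)_+^{p\theta}\rho_x\rangle^{1/\theta}$ as you write. Since $\rho_x\leq 1$ the inequality goes the wrong way, and the recursion in $m$ does not close with a fixed weight. This is precisely why the paper separates the local sup estimate (no weight, nested balls supply the geometric gain) from the global $L^{p\theta}(\rho_x)$ bound (weight, but no iteration). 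Your identification of the ``main obstacle''---absorbing the $\nabla\sqrt{\rho_x}$ cross-terms without degrading $\frac{4(p-1)}{p}-2\sqrt{\delta}$---is correct, but it is relevant only for Proposition~\ref{rho_prop}, not for the iteration.
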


To make the paper self-contained, below we reproduce more or less verbatim the proofs of \cite[Theorem 5]{Ki_multi} and \cite[Theorem 6]{Ki_multi}.

\subsection{Proof of Theorem \ref{a1}}
Fix throughout this proof $p>\frac{2}{2-\sqrt{\delta}}$, $p \geq 2$. 
Set $$v:=(u-k)_+, \quad k \in \mathbb R.$$ Fix $R_0 \leq 1$.

\begin{proposition}[{\cite[Prop.\,1]{Ki_multi}, Remark 12}]
\label{c_prop} 
For all $0<r<R \leq R_0$, 
\begin{align*}
\|(\nabla v^{\frac{p}{2}}) \mathbf{1}_{B_{r}}\|_2^2 \leq \frac{K_1}{(R-r)^{2}} \|v^{\frac{p}{2}} \mathbf{1}_{B_{R}}\|_2^2 + K_2\||f-\mu u|^{\frac{p}{2}} \mathbf{1}_{u>k} \mathbf{1}_{B_{R}}\|_2^2
\end{align*}
for generic constants $K_1$, $K_2$ (in particular, independent of $k$ or $r$, $R$). 
\end{proposition}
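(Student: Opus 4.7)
The plan is to derive this Caccioppoli-type inequality in the classical manner, by testing the equation $(\mu+\Lambda_n)u=f$ against a truncated test function. I introduce a smooth cutoff $\eta\in C_c^\infty(B_R)$ with $\eta\equiv 1$ on $B_r$, $0\le\eta\le 1$, and $|\nabla\eta|\le 2/(R-r)$, and pair the equation with $\eta^2 v^{p-1}$, where $v=(u-k)_+$. The term $\mu\langle u,\eta^2 v^{p-1}\rangle$ is non-negative once one decomposes $u=u_+-u_-$ and handles the truncations separately with $k\ge 0$, so it may be discarded. Integration by parts on the Laplacian, combined with $\nabla u\,\mathbf 1_{u>k}=\nabla v$ and $|\nabla v^{p/2}|^2=(p/2)^2 v^{p-2}|\nabla v|^2$, gives
\[
\langle-\Delta u,\eta^2 v^{p-1}\rangle=\frac{4(p-1)}{p^2}\|\eta\nabla v^{p/2}\|_2^2+\frac{4}{p}\langle \eta v^{p/2}\nabla v^{p/2},\nabla\eta\rangle,
\]
while the drift term rewrites as $\frac{2}{p}\langle \eta b_n v^{p/2},\eta\nabla v^{p/2}\rangle$.

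Next I apply Cauchy's inequality $2|\alpha\beta|\le\varepsilon\alpha^2+\varepsilon^{-1}\beta^2$ to the drift and to the $\nabla\eta$ cross term, with small parameters to be fixed. Invoking form-boundedness,
\[
\|\eta b_n v^{p/2}\|_2^2\le\delta\|\nabla(\eta v^{p/2})\|_2^2+c(\delta)\|\eta v^{p/2}\|_2^2,
\]
and the product-rule bound $\|\nabla(\eta v^{p/2})\|_2^2\le(1+\kappa)\|\eta\nabla v^{p/2}\|_2^2+(1+\kappa^{-1})\|v^{p/2}\nabla\eta\|_2^2$ for arbitrary $\kappa>0$, the drift contributes $\bigl[\frac{\varepsilon\delta(1+\kappa)}{p}+\frac{1}{\varepsilon p}\bigr]\|\eta\nabla v^{p/2}\|_2^2$ on the right, plus lower-order terms controlled by $(R-r)^{-2}\|v^{p/2}\mathbf 1_{B_R}\|_2^2$ and $c(\delta)\|v^{p/2}\mathbf 1_{B_R}\|_2^2$. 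Optimizing in $\varepsilon$ yields the coefficient $\frac{2\sqrt{\delta(1+\kappa)}}{p}$ on $\|\eta\nabla v^{p/2}\|_2^2$.

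The heart of the argument, and the main obstacle, is arranging this coefficient to be strictly smaller than the good coefficient $\frac{4(p-1)}{p^2}$ on the left. Sending $\kappa\downarrow 0$ reduces the requirement to $p(2-\sqrt\delta)>2$, which is precisely the hypothesis $p>\frac{2}{2-\sqrt\delta}$ and forces $\delta<4$. Once this is met, $\|\eta\nabla v^{p/2}\|_2^2$ is absorbed on the left with a fixed positive coefficient. The lower-order term $c(\delta)\|v^{p/2}\mathbf 1_{B_R}\|_2^2$ is swallowed into $K_1(R-r)^{-2}\|v^{p/2}\mathbf 1_{B_R}\|_2^2$ using $R-r\le R_0\le 1$. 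Finally, the $f$-term $\langle f,\eta^2 v^{p-1}\mathbf 1_{u>k}\rangle$ is handled by Young's inequality $|f|v^{p-1}\le\tau v^p+C_\tau|f|^p\mathbf 1_{u>k}$, with the $\tau v^p$ piece absorbed once more into the first summand on the right and the remainder producing exactly the $K_2\||f|^{p/2}\mathbf 1_{u>k}\mathbf 1_{B_R}\|_2^2$ contribution of the statement.
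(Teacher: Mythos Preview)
Your proof is correct and follows exactly the route the paper relies on (the paper itself does not spell out a proof here but defers to Proposition~\ref{c_prop2}, imported from \cite{Ki_multi}, and the same computation is outlined at the end of the introduction): test the equation against $\eta^2 v^{p-1}$, integrate by parts, control the drift term via the form-bound applied to $\eta v^{p/2}$, and optimize the quadratic-inequality parameter so that the coefficient $\frac{2\sqrt{\delta}}{p}$ in front of $\|\eta\nabla v^{p/2}\|_2^2$ is beaten by $\frac{4(p-1)}{p^2}$, which is precisely the constraint $p>\frac{2}{2-\sqrt\delta}$.

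One small imprecision worth noting: the nonnegativity of $\mu\langle u,\eta^2 v^{p-1}\rangle$ that you invoke holds only for $k\ge 0$, and the decomposition $u=u_+-u_-$ you allude to produces the analogous estimate for $(-u-k)_+$ with $k\ge 0$, not for $(u-k)_+$ with $k<0$. This is harmless for the downstream De~Giorgi argument (Propositions~\ref{prop71} and~\ref{osc_prop} only ever use nonnegative levels, applied to $u$ or to $-u$), but it does not literally establish the proposition for all $k\in\mathbb R$ as stated.
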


\begin{lemma}[{\cite[Lemma 7.1]{G}}]
\label{dg_lemma}
If $\{z_m\}_{m=0}^\infty \subset \mathbb R_+$ is a sequence of positive real numbers  such that
$$
z_{m+1} \leq N C_0^m z^{1+\alpha}_m
$$
for some $C_0>1$, $\alpha>0$, and
$$
z_0 \leq N^{-\frac{1}{\alpha}}C_0^{-\frac{1}{\alpha^2}}.
$$
Then
$
\lim_m z_m=0.
$
\end{lemma}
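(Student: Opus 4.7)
\bigskip

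\noindent\textbf{Proof plan for Lemma \ref{dg_lemma}.}
The plan is to show by induction that the sequence $\{z_m\}$ decays geometrically, namely
\[
z_m \leq z_0\, C_0^{-m/\alpha} \qquad \text{for every } m \geq 0. \tag{$\dagger$}
\]
Since $C_0 > 1$, this bound immediately forces $z_m \to 0$, which is exactly the desired conclusion. The constant $C_0^{-1/\alpha}$ is the natural geometric ratio to try: when one plugs the ansatz $z_m = z_0 q^m$ into the recursion $z_{m+1} \leq N C_0^m z_m^{1+\alpha}$, the choice $q = C_0^{-1/\alpha}$ is precisely what balances the $C_0^m$ factor against the $q^{m(1+\alpha)}$ factor coming from $z_m^{1+\alpha}$.

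The base case $m=0$ of ($\dagger$) is trivial. For the inductive step, assuming ($\dagger$) at level $m$, one uses the recursion to write
\[
z_{m+1} \leq N C_0^m \bigl(z_0 C_0^{-m/\alpha}\bigr)^{1+\alpha} = N z_0^{1+\alpha}\, C_0^{m - m(1+\alpha)/\alpha} = N z_0^{1+\alpha}\, C_0^{-m/\alpha}.
\]
To complete the induction one needs
\[
N z_0^{1+\alpha}\, C_0^{-m/\alpha} \leq z_0\, C_0^{-(m+1)/\alpha},
\]
which reduces, after cancelling $z_0$ and $C_0^{-m/\alpha}$, to $N z_0^{\alpha} \leq C_0^{-1/\alpha}$, i.e.\ $z_0 \leq N^{-1/\alpha} C_0^{-1/\alpha^2}$. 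This is exactly the standing hypothesis on $z_0$, so the induction closes.

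There is essentially no obstacle here: the only ``choice'' in the proof is the geometric ratio $q=C_0^{-1/\alpha}$, and the smallness threshold on $z_0$ in the hypothesis is precisely what the induction requires. The lemma should be regarded as a purely algebraic observation about nonlinear geometric-type recursions, and the proof is a one-step induction followed by the remark that $C_0^{-m/\alpha} \to 0$.
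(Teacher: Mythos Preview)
Your proof is correct and is the standard argument: the paper does not prove this lemma itself but simply quotes it from \cite[Lemma 7.1]{G}, and the proof there is exactly the induction on the geometric decay estimate $z_m \le z_0\,C_0^{-m/\alpha}$ that you carry out.
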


\begin{lemma}[{\cite[Lemma 7.3]{G}}]
\label{lem2}
Let $\varphi(t)$ be a positive function, and assume that there exists a constant $q$ and a number $0<\tau<1$ such that for every $0<R<R_0$
$$
\varphi(\tau R) \leq \tau^\delta \varphi(R) + BR^\beta
$$
with $0<\beta<\delta$, and
$$
\varphi(t) \leq q\varphi(\tau^k R)
$$
for every $t$ in the interval $(\tau^{k+1}R,\tau^k R)$. Then, for every $0<\rho<R< R_0$, we have
$$
\varphi(\rho) \leq C \biggl(\biggl(\frac{\rho}{R}\biggr)^\beta \varphi(R) + B \rho^\beta \biggr)
$$
with constant $C$ that depends only on $q$, $\tau$, $\delta$ and $\beta$.
\end{lemma}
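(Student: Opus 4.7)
The plan is to run the hypothesized one-step decay estimate $\varphi(\tau R)\le \tau^\delta\varphi(R)+BR^\beta$ iteratively on the geometric sequence $R_k:=\tau^k R$, and then convert the resulting discrete bound into a continuous bound via the second hypothesis. Set $a_k:=\varphi(\tau^k R)$. Applied at scale $\tau^k R$, the first hypothesis gives the linear recursion
\[
a_{k+1}\le \tau^\delta a_k + B(\tau^k R)^\beta = \tau^\delta a_k + BR^\beta\tau^{k\beta}.
\]
Unrolling this recursion from $k=0$ yields, by an immediate induction,
\[
a_{k+1}\le \tau^{(k+1)\delta}a_0 + BR^\beta\sum_{j=0}^{k}\tau^{(k-j)\delta+j\beta}.
\]

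Because $0<\beta<\delta$ and $\tau\in(0,1)$, one has $\tau^\delta\le\tau^\beta$, so I would bound the geometric-type sum by factoring out $\tau^{k\beta}$:
\[
\sum_{j=0}^{k}\tau^{(k-j)\delta+j\beta}=\tau^{k\beta}\sum_{j=0}^{k}\tau^{(k-j)(\delta-\beta)}\le \frac{\tau^{k\beta}}{1-\tau^{\delta-\beta}}.
\]
Since also $\tau^{(k+1)\delta}\le\tau^{k\beta}$, this produces a clean geometric decay
\[
a_{k+1}\le C_1\tau^{k\beta}\bigl(\varphi(R)+BR^\beta\bigr),\qquad C_1:=1+\tfrac{1}{1-\tau^{\delta-\beta}}.
\]

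To pass from this discrete estimate to the continuous bound at an arbitrary $\rho\in(0,R)$, I would pick the unique integer $k\ge 0$ with $\tau^{k+1}R<\rho\le\tau^k R$, which gives $\tau^{k\beta}\le \tau^{-\beta}(\rho/R)^\beta$. The second hypothesis yields $\varphi(\rho)\le q\,\varphi(\tau^k R)=q a_k$, and combining with the previous display gives
\[
\varphi(\rho)\le qC_1\tau^{-\beta}\Bigl(\tfrac{\rho}{R}\Bigr)^\beta\bigl(\varphi(R)+BR^\beta\bigr)=C\Bigl(\bigl(\tfrac{\rho}{R}\bigr)^\beta\varphi(R)+B\rho^\beta\Bigr),
\]
with $C=C(q,\tau,\delta,\beta)$, which is exactly the claimed conclusion.

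Routine as this is, the one subtle point I would flag is the monotonicity-surrogate role of the second hypothesis: without any monotonicity of $\varphi$, the estimate only controls $\varphi$ on the discrete scales $\tau^kR$, and the bound $\varphi(t)\le q\varphi(\tau^k R)$ for $t\in(\tau^{k+1}R,\tau^k R)$ is precisely what permits interpolation to arbitrary $\rho$, at the price of the multiplicative constant $q$. Everything else is bookkeeping of the geometric series, which is why the hypothesis $\beta<\delta$ is essential (it is exactly what makes the sum $\sum_j\tau^{(k-j)(\delta-\beta)}$ summable so that the dominant scale $\tau^{k\beta}$ survives on the right-hand side).
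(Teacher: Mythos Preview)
Your argument is correct and is precisely the standard iteration proof of this lemma. The paper does not supply its own proof; it merely quotes the statement from Giusti's book (\cite[Lemma~7.3]{G}), so there is nothing to compare against beyond noting that your proof is the classical one. The only cosmetic slip is that you state the bound for $a_{k+1}$ but then apply it to $a_k$; shifting the index costs an extra factor $\tau^{-\beta}$ in the final constant, which is harmless since $C$ is allowed to depend on $\tau$ and $\beta$.
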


The proof follows De Giorgi's method as it is presented in \cite[Ch.\,7]{G} with appropriate modifications to account for our somewhat different definition of $L^p$ De Giorgi's classes, i.e.\,functions satisfying the inequality in Proposition \ref{c_prop}.

\begin{proposition}[{\cite[Prop.\,2]{Ki_multi}}]
\label{prop71}
For all $0<r<R \leq R_0$, 
\begin{align*}
\sup_{B_{\frac{R}{2}}} u \leq C_1 \biggl(\frac{1}{|B_R|}\langle u^p\mathbf{1}_{B_R \cap \{u>0\}} \rangle \biggr)^{\frac{1}{p}} \biggl(\frac{|B_R \cap \{u>0\}|}{|B_R|}\biggr)^\frac{\alpha}{p} + C_2 R^{\frac{2}{p}}
\end{align*}
for generic constants $C_1$, $C_2$ that also depend on $\|f-\mu u\|_\infty~(\leq 2\|f\|_\infty)$, where $\alpha>0$ is fixed by $\alpha(\alpha+1)=\frac{2}{d}$.
\end{proposition}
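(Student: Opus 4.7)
The plan is the classical De Giorgi $L^\infty$ iteration built on the Caccioppoli-type inequality of Proposition \ref{c_prop}. Fix a threshold $\bar k>0$ to be chosen at the end, introduce increasing levels $k_m:=\bar k(1-2^{-m})$ and shrinking radii $r_m:=R/2+R/2^{m+1}$, and set $v_m:=(u-k_m)_+$, $A_m:=B_{r_m}\cap\{u>k_m\}$, $J_m:=\int_{A_m}v_m^p$. The objective is to produce a nonlinear recurrence on $J_m$ to which Lemma \ref{dg_lemma} applies, forcing $J_m\to 0$ and hence $u\leq\bar k$ a.e.\,on $B_{R/2}$.

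The engine of the iteration is Proposition \ref{c_prop} applied with $k=k_m$, $r=r_{m+1}$, $R=r_m$, combined with the Sobolev embedding $W^{1,2}\hookrightarrow L^{2^*}$, $2^*:=2d/(d-2)$, applied to $\eta_m v_m^{p/2}$ for a cutoff $\eta_m\in C_c^\infty(B_{r_m})$ with $\eta_m=1$ on $B_{r_{m+1}}$ and $|\nabla\eta_m|\leq C\,2^m/R$. This yields
\begin{equation*}
\|v_m^{p/2}\|_{L^{2^*}(B_{r_{m+1}})}^2 \leq \frac{C\,4^m}{R^2}\,J_m + C\,\|f\|_\infty^p\,|A_m|.
\end{equation*}
H\"{o}lder gives $J_{m+1}\leq\int_{A_{m+1}}v_m^p\leq\|v_m^{p/2}\|_{L^{2^*}(B_{r_{m+1}})}^2\,|A_{m+1}|^{2/d}$, while Chebyshev on the gap $k_{m+1}-k_m=\bar k\,2^{-m-1}$ yields $|A_{m+1}|\leq(2^{m+1}/\bar k)^p J_m$. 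Combining these ingredients produces
\begin{equation*}
J_{m+1}\leq C_0^m\Big(\bar k^{-2p/d}R^{-2}J_m^{1+2/d}+\bar k^{-2p/d}\|f\|_\infty^p\,|A_m|\,J_m^{2/d}\Big).
\end{equation*}
The $\|f\|_\infty$ contribution is absorbed by splitting $\bar k=\bar k_1+\bar k_2$ with $\bar k_2:=C_\star\|f\|_\infty R^{2/p}$: for $m\geq 1$ one has $u>k_m\geq\bar k_2/2$ on $A_m$, hence $\|f\|_\infty^p\,|A_m|\leq C R^{-2}\bar k^p\,|A_m|$, which folds back into the principal term via the same Chebyshev bound.

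Lemma \ref{dg_lemma} applied to $z_m:=J_m/(\bar k_1^p|B_R|)$ then requires $\bar k_1^p\geq C_8|B_R|^{-1}\int_{A_0}u^p$, which already produces an $L^\infty$ estimate of the correct shape modulo the measure factor $(|A_0|/|B_R|)^{\alpha/p}$. To obtain this sharper factor I exploit the monotonicity $|A_m|\leq|A_0|$ alongside the Chebyshev bound when applying H\"{o}lder, i.e.\,interpolating $|A_{m+1}|^{2/d}\leq|A_0|^{2/d-\beta}\big(2^{pm}\bar k^{-p}J_m\big)^\beta$ for a parameter $\beta\in[0,2/d]$ chosen so that the exponent of $J_m$ in the recurrence becomes $1+\beta$. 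Solving the resulting consistency relation between this $\beta$ and the Chebyshev-induced power of $\bar k$ produces the identity $\alpha(\alpha+1)=2/d$ that appears in the statement. The main obstacle is precisely this two-parameter bookkeeping: tracking $J_m$ and $|A_m|$ simultaneously so that the measure factor $|A_0|$ enters $\bar k$ with the correct exponent $\alpha/p$, without degrading the absorption of the $\|f\|_\infty^p|A_m|$ term that otherwise would have to be redone at each level.
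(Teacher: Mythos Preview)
Your plan is sound and will prove the proposition, but it diverges from the paper's argument in one structural respect and contains one imprecision worth flagging.

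\textbf{Where the paper differs.} Rather than iterate on $J_m=\int_{A_m}v_m^p$ and recover the measure factor afterwards by interpolating $|A_{m+1}|^{2/d}\leq |A_0|^{2/d-\beta}\big(2^{pm}\bar k^{-p}J_m\big)^\beta$, the paper iterates directly on the \emph{combined} quantity
\[
z_m:=\big\|(u-k_m)_+^{p/2}\big\|_{L^2(B_{r_m})}^2\,\big|B_{r_m}\cap\{u>k_m\}\big|^\alpha.
\]
After Caccioppoli + Sobolev, one multiplies the resulting inequality by $|B_r\cap\{u>k\}|^\alpha$, bounds this factor by $(k-h)^{-p\alpha}\|(u-h)_+^{p/2}\|_{L^2(B_\rho)}^{2\alpha}$, and observes that the remaining measure power $|B_\rho\cap\{u>h\}|^{2/d}$ becomes $|B_\rho\cap\{u>h\}|^{\alpha(1+\alpha)}$ precisely when $\alpha(\alpha+1)=2/d$. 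This closes the recurrence to $z_{m+1}\leq C_0^m N z_m^{1+\alpha}$ with no leftover terms. The inhomogeneity is handled simply by requiring $\xi\geq R^{2/p}$, which makes the bracket $\big[(\rho-r)^{-2}+\|f\|_\infty^p(k-h)^{-p}\big]$ comparable to its first summand; no splitting $\bar k=\bar k_1+\bar k_2$ is needed.

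\textbf{The imprecision in your sketch.} In your route the identity $\alpha(\alpha+1)=2/d$ is not actually forced: matching the $|A_0|$- and $R$-powers yields the \emph{single} relation $\beta(1+\alpha)=2/d$, so every $\alpha\geq 0$ is attainable by choosing $\beta=2/(d(1+\alpha))$. The specific value in the statement corresponds to $\beta=\alpha$, but nothing in your scheme selects it; the identity is a structural feature of the paper's combined-quantity iteration, not of your interpolation. Also, your ``folds back'' step for the $\|f\|_\infty^p|A_m|$ term is a little loose: Chebyshev at level $m$ gives $|A_m|\leq (2^m/\bar k)^pJ_{m-1}$, not $J_m$, so you land on a two-step recurrence $J_{m+1}\leq C_0^m N J_{m-1}^{1+\beta}$. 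This is harmless (use $J_m\leq J_{m-1}$ or iterate on even/odd indices), but it should be said explicitly. Both points are cosmetic; your proof goes through.
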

\begin{proof}
Without loss of generality, $R_0=1$. Let $\frac{1}{2}<r<\rho \leq 1$. Fix $\eta \in C_c^\infty$, $\eta=1$ on $B_{r}$, $\eta=0$ on $\mathbb R^d \setminus \bar{B}_{\frac{r+\rho}{2}}$, $|\nabla \eta| \leq \frac{4}{\rho-r}$. Set $\zeta:=\eta v=\eta(u-k)_+$, $k \in \mathbb R$. 
Using H\"{o}lder's inequality and Sobolev's embedding theorem, we obtain
\begin{align*}
\|v^{\frac{p}{2}}\mathbf{1}_{B_r}\|_2^2 & \leq \|\zeta^{\frac{p}{2}}\mathbf{1}_{B_r}\|_2^2  \leq \langle \mathbf{1}_{B_{r} \cap \{u>k\}}\rangle^{\frac{2}{d}} 
 \langle \zeta^{\frac{pd}{d-2}}\mathbf{1}_{B_\frac{r+\rho}{2}}\rangle^{\frac{d-2}{d}} \\
& \leq c_1|B_{r} \cap \{u>k\}|^{\frac{2}{d}}\langle |\nabla \zeta^{\frac{p}{2}}|^2\mathbf{1}_{B_{\frac{r+\rho}{2}}}\rangle \\
& = c_1|B_{r} \cap \{u>k\}|^{\frac{2}{d}}\langle |(\nabla \eta^{\frac{p}{2}})v^{\frac{p}{2}}+\eta^{\frac{p}{2}}\nabla v^{\frac{p}{2}}|^2\mathbf{1}_{B_{\frac{r+\rho}{2}}}\rangle
\end{align*}
Hence
\begin{align*}
\|v^{\frac{p}{2}}\mathbf{1}_{B_r}\|_2^2 \leq c_2|B_{r} \cap \{u>k\}|^{\frac{2}{d}}\biggl(\frac{1}{(\rho-r)^2}\|v^{\frac{p}{2}} \mathbf{1}_{B_\frac{r+\rho}{2}} \|_2^2 + \|(\nabla v^{\frac{p}{2}})\mathbf{1}_{B_{\frac{r+\rho}{2}}}\|_2^2 \biggr).
\end{align*}
On the other hand, Proposition \ref{c_prop} yields:
\begin{align}
\label{v_cor}
\|(\nabla v^{\frac{p}{2}}) \mathbf{1}_{B_{\frac{r+\rho}{2}}}\|_2^2 \leq \frac{K_1}{(\rho-r)^{2}} \|v^{\frac{p}{2}} \mathbf{1}_{B_{\rho}}\|_2^2 + K_2\|f-\mu u\|_\infty^p\, \big|B_\rho\cap\{u>k\}\big|,
\end{align}
so
\begin{align}
\|v^{\frac{p}{2}}\mathbf{1}_{B_r}\|_2^2 & \leq C|B_{r} \cap \{u>k\}|^{\frac{2}{d}}\biggl(\frac{1}{(\rho-r)^2}\|v^{\frac{p}{2}} \mathbf{1}_{B_\rho} \|_2^2 + \|f-\mu u\|_\infty^p\, \big|B_\rho\cap\{u>k\}\big| \biggr) \notag \\
& \leq \frac{C|B_{\rho} \cap \{u>k\}|^{\frac{2}{d}}}{(\rho-r)^2}\|v^{\frac{p}{2}} \mathbf{1}_{B_\rho} \|_2^2 + C\|f-\mu u\|_\infty^p |B_{\rho} \cap \{u>k\}|^{1+\frac{2}{d}}. \label{i_8}
\end{align}
Now, returning from notation $v$ to $(u-k)_+$, we note that 
if $h<k$, then $\|(u-k)^{\frac{p}{2}}\mathbf{1}_{B_\rho \cap \{u>k\}}\|_2 \leq \|(u-h)^{\frac{p}{2}}\mathbf{1}_{B_\rho \cap \{u>h\}}\|_2$ and $\|(u-h)^{\frac{p}{2}}\mathbf{1}_{B_\rho \cap \{u>h\}}\|_2^2 \geq (k-h)^p |B_\rho \cap \{u>k\}|$. Therefore, we obtain from \eqref{i_8}
\begin{align*}
\|(u-k)_+^{\frac{p}{2}}\mathbf{1}_{B_r}\|_2^2 & \leq \frac{C}{(\rho-r)^2}\|(u-h)_+^{\frac{p}{2}}\mathbf{1}_{B_\rho}\|_2^2|B_{\rho} \cap \{u>h\}|^{\frac{2}{d}} \\
& + \frac{C\|f-\mu u\|_\infty^p}{(k-h)^p}\|(u-h)_+^{\frac{p}{2}}\mathbf{1}_{B_\rho}\|_2^2|B_{\rho} \cap \{u>h\}|^{\frac{2}{d}}.
\end{align*}
Multiplying this inequality by $|B_r \cap \{u>k\}|^\alpha~~\big(\leq \frac{1}{(k-h)^{p\alpha}} \|(u-h)_+^{\frac{p}{2}}\mathbf{1}_{B_\rho}\|_2^{2\alpha}\big)$ and using $\alpha^2 + \alpha=\frac{2}{d}$, we obtain
\begin{align*}
& \|(u-k)_+^{\frac{p}{2}}\mathbf{1}_{B_r}\|_2^2|B_r \cap \{u>k\}|^\alpha \\
&\leq C \biggl[\frac{1}{(\rho-r)^2} + \frac{\|f-\mu u\|_\infty^p}{(k-h)^p} \biggr]\frac{1}{(k-h)^{p\alpha}} \bigl(\|(u-h)_+^{\frac{p}{2}}\mathbf{1}_{B_\rho}\|_2^2|B_\rho \cap \{u>h\}|^\alpha \bigr)^{1+\alpha}.
\end{align*}
Now, take $r:=r_{i+1}$, $\rho:=r_i$, where $r_i:=\frac{R}{2}(1+\frac{1}{2^i})$ and $k:=k_{i+1}$, $h:=k_i$, where $k_i:=\xi(1-2^{-i})$, with constant $\xi \geq R^{\frac{2}{p}}$ to be chosen later. Then, setting $$z_i=z(k_i,r_i):=\|(u-k_i)_+^{\frac{p}{2}}\mathbf{1}_{B_{r_i}}\|_2^2|B_{r_i} \cap \{u>k_i\}|^\alpha,$$ we have
$$
z_{i+1} \leq K\bigg[2^{2i} + \frac{2^{pi}R^2}{\xi^p}\bigg]\frac{1}{R^2}\frac{2^{pi\alpha}}{\xi^{p\alpha}}z_i^{1+\alpha}
$$
hence (using  $\xi \geq R^{\frac{2}{p}}$)
$$
z_{i+1} \leq 2^{p(1+\alpha)i} \frac{2K }{R^2}\frac{1}{\xi^{p\alpha}}z_i^{1+\alpha}.
$$
We apply Lemma \ref{dg_lemma}. In the notation of this lemma, $C_0=2^{p(1+\alpha)}$ and $N=\frac{2K }{R^2}\frac{1}{\xi^{p\alpha}}$. We need
$$z_0 \leq N^{-\frac{1}{\alpha}}C_0^{-\frac{1}{\alpha^2}}$$
where, recall, $z_0=\langle u^p \mathbf{1}_{B_R \cap \{u>0\}}\rangle |B_R \cap \{u>0\}|^\alpha$. The latter amounts to requiring $$\xi \geq C_1 R^{-\frac{2}{p\alpha}}z_0^{\frac{1}{p}}.$$
Take
$
\xi := R^{\frac{2}{p}} + C_1 R^{-\frac{2}{p\alpha}}z_0^{\frac{1}{p}}.
$
By Lemma \ref{dg_lemma}, $z(\xi,\frac{R}{2})=0$, i.e.\,$\sup_{\frac{R}{2}}u \leq \xi$. The claimed inequality follows.
\end{proof}

Set $${\rm osc\,}(u,R):=\sup_{y,y' \in B_R}|u(y)-u(y')|.$$

\begin{proposition}[{\cite[Prop.\,3]{Ki_multi}}]
\label{osc_prop}
Fix $k_0$ by $$2k_0=M(2R)+ m(2R):=\sup_{B_{2R}}u + \inf_{B_{2R}}u.$$ Assume that $|B_R \cap \{u>k_0\}| \leq \gamma |B_R|$ for some $\gamma<1$. If 
\begin{equation}
\label{hyp_n}
{\rm osc\,}(u,2R) \geq 2^{n+1}C R^{\frac{2}{p}},
\end{equation}
then, for $k_n:=M(2R)-2^{-n-1}{\rm osc\,}(u,2R)$,
$$
|B_{R} \cap \{u>k_n\}| \leq c n^{-\frac{d}{2(d-1)}}|B_R|.
$$
\end{proposition}
\begin{proof}
1.~For $h \in ]k_0,k[$, set $w:=(u-h)^{\frac{p}{2}}$ if $h<u<k$, set $w:=(k-h)^{\frac{p}{2}}$ if $u \geq k$, and $w:=0$ if $u \leq h.$ Note that $w=0$ in $B_R \setminus (B_R \cap \{u>k_0\})$. The measure of this set is greater than $\gamma |B_R|$, so the Sobolev embedding theorem applies and yields
\begin{align*}
(k-h)^{\frac{p}{2}}|B_R \cap \{u>k\}|^{\frac{d-1}{d}} & \leq c_1 \langle w^{\frac{d}{d-1}}\mathbf{1}_{B_R}\rangle^{\frac{d-1}{d}}  \leq c_2\langle |\nabla w| \mathbf{1}_\Delta \rangle \\
& \leq c_2 |\Delta|^{\frac{1}{2}}\langle |\nabla (u-h)^{\frac{p}{2}}|^2 \mathbf{1}_{B_R \cap \{u>h\}}\rangle^{\frac{1}{2}},
\end{align*}
where $$\Delta:=B_R \cap \{u>h\} \setminus (B_R \cap \{u>k\}).$$ 
Now, it follows from Proposition \ref{c_prop} that
\begin{align*}
\langle |\nabla (u-h)^{\frac{p}{2}}|^2 \mathbf{1}_{B_R \cap \{u>h\}}\rangle & \leq \frac{C_3}{R^2}\langle (u-h)^p \mathbf{1}_{B_{2R} \cap \{u>h\}}\rangle + C_4 |B_{2R} \cap \{u>h\}| \\
& \leq C_3 R^{d-2} (M(2R)-h)^p + C_5 R^d.
\end{align*}
For $h \leq k_n$ we have $M(2R)-h \geq M(2R)-k_n \geq CR^{\frac{2}{p}}$, where we have used \eqref{hyp_n}.
Therefore, summarizing what was written above, we have
$$
(k-h)^{\frac{p}{2}}|B_R \cap \{u>k\}|^{\frac{d-1}{d}} \leq c|\Delta|^{\frac{1}{2}}R^{\frac{d-2}{2}}(M(2R)-h)^\frac{p}{2}.
$$

2.~Select $k=k_i:=M(2R)-2^{-i-1}{\rm osc\,}(u,2R)$, $h=k_{i-1}$. Then
$$
M(2R)-h=2^{-i}{\rm osc\,}(u,2R), \quad |k-h|=2^{-i-1}{\rm osc\,}(u,2R),
$$
so
$$
|B_R \cap \{u>k_n\}|^{\frac{2(d-1)}{d}} \leq |B_R \cap \{u>k_i\}|^{\frac{2(d-1)}{d}} \leq C |\Delta_i| R^{d-2},
$$
where $\Delta_i:=B_R \cap \{u>k_i\} \setminus (B_R \cap \{u>k_{i-1}\})$.
Summing up in $i$ from $1$ to $n$, we obtain 
$$
n |B_R \cap \{u>k_n\}|^{\frac{2(d-1)}{d}} \leq C R^{d-2} |B_R \cap \{u>k_0\}| \leq C' R^{2(d-1)},
$$
and the claimed inequality follows.
\end{proof}

\subsubsection*{Proof of Theorem \ref{a1}, completed} Fix $k_0$ by $2k_0=M(2R)+m(2R)$. Without loss of generality, $|B_R \cap \{u>k_0\}| \leq \frac{1}{2}|B_R|$ (otherwise we replace $u$ by $-u$). Set $k_n:=M(2R)-2^{-n-1}{\rm osc\,}(u,2R)>k_0$. By Proposition \ref{prop71},
\begin{align*}
\sup_{B_\frac{R}{2}}(u-k_n) & \leq  C_1 \bigl(\frac{1}{|B_R|}\langle (u-k_n)^p\mathbf{1}_{B_R \cap \{u>k_n\}} \rangle \bigr)^{\frac{1}{p}} \biggl(\frac{|B_R \cap \{u>k_n\}|}{|B_R|}\biggr)^\frac{\alpha}{p} + C_2 R^{\frac{2}{p}} \\& \leq C_1 \sup_{B_R} (u-k_n) \biggl(\frac{|B_R \cap \{u>k_n\}|}{|B_R|}\biggr)^{\frac{1+\alpha}{p}} + C_2 R^{\frac{2}{p}}
\end{align*}
We now apply Proposition \ref{osc_prop} (with, say, $C=1$). Fix $n$ by 
$$
 c n^{-\frac{d}{2(d-1)}} \leq \bigg(\frac{1}{2C_1}\bigg)^{\frac{p}{1+\alpha}}.
$$ Then, if ${\rm osc\,}(u,2R) \geq 2^{n+1} R^{\frac{2}{p}}$, we obtain from Proposition \ref{osc_prop}
$$
M\left(\frac{R}{2}\right)-k_n \leq \frac{1}{2}(M(2R)-k_n) + C_2R^{\frac{2}{p}}, 
$$
so, 
$$
M\left(\frac{R}{2}\right) \leq M(2R) - \frac{1}{2^{n+1}}{\rm osc\,}(u,2R)+\frac{1}{2}\frac{1}{2^{n+1}}{\rm osc\,}(u,2R) +  C_2R^{\frac{2}{p}},
$$
$$
M\left(\frac{R}{2}\right) - m\left(\frac{R}{2}\right) \leq M(2R) - m(2R) - \frac{1}{2}\frac{1}{2^{n+1}}{\rm osc\,}(u,2R) +  C_2R^{\frac{2}{p}}.
$$
Hence, since ${\rm osc\,}(u,2R)=M(2R)-m(2R)$,
\begin{equation}
\label{osc_1}
{\rm osc\,}\biggl(u,\frac{R}{2}\biggr) \leq \biggl(1-\frac{1}{2^{n+2}} \biggr){\rm osc\,}(u,2R)+C_2R^{\frac{2}{p}}.
\end{equation}
If ${\rm osc\,}(u,2R) \leq 2^{n+1}R^{\frac{2}{p}}$, then, clearly,
\begin{equation}
\label{osc_2}
{\rm osc\,}\left(u,\frac{R}{2}\right) \leq \biggl(1-\frac{1}{2^{n+2}} \biggr){\rm osc\,}(u,2R)+ \frac{1}{2} R^{\frac{2}{p}}.
\end{equation}
This yields the sought H\"{o}lder continuity of $u$ via Lemma \ref{lem2} with $\tau=\frac{1}{4}$, $\delta=\log_\tau(1-2^{-n-2})$ and $0<\beta<\frac{2}{p} \wedge \delta$. (Note that the second inequality in the conditions of Lemma \ref{lem2} holds if $q=1$ and $\varphi$ is non-decreasing, which is our case.)  \hfill \qed

\subsection{Proof of Theorem \ref{a2}}
Recall that $v:=(u-k)_+$, where $u=u_n$ solves
$$
(\mu+\Lambda_n)u=|b_n|f, \quad f \in C \cap L^1.
$$
It suffices to carry out the proof for the case $f \geq 0$. We will need

\begin{proposition}[{\cite[Prop.\,4]{Ki_multi}}]
\label{c_prop2}
 Fix $R_0 \leq 1$ and $p>\frac{2}{2-\sqrt{\delta}}$, $p \geq 2$.
Then, for all $0<r<R \leq R_0$ and every $k \geq 0$,
\begin{align*}
\mu \|v^{\frac{p}{2}}\mathbf{1}_{B_{r}}\|_2^2 + \|(\nabla v^{\frac{p}{2}}) \mathbf{1}_{B_{r}}\|_2^2 & \leq \frac{K_1}{(R-r)^{2}} \|v^{\frac{p}{2}} \mathbf{1}_{B_{R}}\|_2^2 \\
&+ K_2\|\big(\mathbf{1}_{|b_n|>1} + |b_n|^\frac{p}{2}\mathbf{1}_{|b_n| \leq 1}\big)|f|^{\frac{p}{2}} \mathbf{1}_{u>k} \mathbf{1}_{B_{R}}\|_2^2
\end{align*}
for constants $K_1$, $K_2$  independent of $r$, $R$, $k$ and $n$.
\end{proposition}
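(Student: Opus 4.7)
The plan is to apply the De Giorgi test function $v^{p-1}\eta^p$ against the equation $(\mu+\Lambda_n)u=|b_n|f$, where $\eta\in C_c^\infty$ satisfies $\eta\equiv 1$ on $B_r$, $\sprt\eta\subset B_R$, and $|\nabla\eta|\leq 4/(R-r)$. Integration by parts on the Laplacian (combined with $\nabla v=\mathbf 1_{\{u>k\}}\nabla u$ and the identity $v^{p-1}\nabla v=\frac{2}{p}v^{p/2}\nabla v^{p/2}$) produces the identity
\[
\mu\langle u v^{p-1}\eta^p\rangle + \frac{4(p-1)}{p^2}\langle\eta^p|\nabla v^{p/2}|^2\rangle + 2\langle v^{p/2}\eta^{p-1}\nabla v^{p/2},\nabla\eta\rangle + \frac{2}{p}\langle b_n\cdot\nabla v^{p/2},v^{p/2}\eta^p\rangle = \langle|b_n f|,v^{p-1}\eta^p\rangle.
\]
Assuming $k\geq 0$ (the general case being reduced by a shift, absorbing the extra $\mu k v^{p-1}$ term by Young's inequality), the first term is bounded below by $\mu\|v^{p/2}\eta^{p/2}\|_2^2$.

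Next I would absorb the two subordinate terms on the LHS into the coercive dispersion term. The $\nabla\eta$ cross term is bounded by $\varepsilon_1\|\eta^{p/2}\nabla v^{p/2}\|_2^2 + \frac{C}{(R-r)^2}\|v^{p/2}\mathbf 1_{B_R}\|_2^2$ via a routine Cauchy-Schwarz and Young's inequality. For the drift term, Cauchy-Schwarz followed by Young yields $\varepsilon_2\|\eta^{p/2}\nabla v^{p/2}\|_2^2 + (\varepsilon_2 p^2)^{-1}\|b_n v^{p/2}\eta^{p/2}\|_2^2$; the form-bound $b_n\in\mathbf F_\delta$ applied to $v^{p/2}\eta^{p/2}$ then gives
\[
\|b_n v^{p/2}\eta^{p/2}\|_2^2 \leq \delta(1+\varepsilon_3)\|\eta^{p/2}\nabla v^{p/2}\|_2^2 + \frac{C_{\varepsilon_3}}{(R-r)^2}\|v^{p/2}\mathbf 1_{B_R}\|_2^2.
\]
Optimizing in $\varepsilon_2=\sqrt{\delta}/p$ and sending $\varepsilon_1,\varepsilon_3\downarrow 0$, the cumulative coefficient of $\|\eta^{p/2}\nabla v^{p/2}\|_2^2$ on the left becomes $\frac{4(p-1)}{p^2}-\frac{2\sqrt{\delta}}{p}$, which is strictly positive precisely when $p>\frac{2}{2-\sqrt{\delta}}$, the threshold built into the hypothesis.

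For the right-hand side $\langle|b_n f|,v^{p-1}\eta^p\rangle$ I would split the domain of integration according to $\{|b_n|\leq 1\}$ versus $\{|b_n|>1\}$. On $\{|b_n|\leq 1\}$, Young's inequality with exponents $p$ and $p/(p-1)$ gives $|b_n f|v^{p-1}\eta^p \leq \varepsilon v^p\eta^p + C(\varepsilon)|b_n|^p|f|^p\eta^p\mathbf 1_{u>k}$; since $R-r\leq R_0\leq 1$, the $\varepsilon v^p\eta^p$ contribution is subsumed by $K_1(R-r)^{-2}\|v^{p/2}\mathbf 1_{B_R}\|_2^2$, while the remaining term is precisely the desired $|b_n|^p\mathbf 1_{|b_n|\leq 1}|f|^p\mathbf 1_{u>k}$ contribution. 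On $\{|b_n|>1\}$, I would factor the integrand as $(|b_n|v^{p/2}\eta^{p/2})(|f|v^{p/2-1}\eta^{p/2}\mathbf 1_{|b_n|>1})$, apply Cauchy-Schwarz and Young to isolate $\varepsilon'\|b_n v^{p/2}\eta^{p/2}\|_2^2$ (reabsorbed by the form-bound into the same dispersion budget as before), leaving $(\varepsilon')^{-1}\langle\mathbf 1_{|b_n|>1}|f|^2 v^{p-2}\eta^p\mathbf 1_{u>k}\rangle$; a final Young's inequality with exponents $p/2$ and $p/(p-2)$ converts $|f|^2 v^{p-2}$ into $|f|^p$ plus an absorbable $v^p$ remainder, thereby producing the $\mathbf 1_{|b_n|>1}|f|^p$ term on the right.

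The main technical obstacle is the simultaneous choice of the parameters $\varepsilon_1,\varepsilon_2,\varepsilon_3,\varepsilon,\varepsilon'$: every contribution to $\|\eta^{p/2}\nabla v^{p/2}\|_2^2$ arising from the absorption steps must be kept strictly below the coercive budget $\frac{4(p-1)}{p^2}-\frac{2\sqrt{\delta}}{p}$, while every auxiliary $v^p\eta^p$ remainder must be subordinated either to the $\mu$ term on the left or to $K_1(R-r)^{-2}\|v^{p/2}\mathbf 1_{B_R}\|_2^2$ on the right. Once these parameters are fixed, restricting the two LHS quantities to $B_r$ (where $\eta\equiv 1$, so that $\|\eta^{p/2}\nabla v^{p/2}\|_2^2\geq \|(\nabla v^{p/2})\mathbf 1_{B_r}\|_2^2$ and $\|v^{p/2}\eta^{p/2}\|_2^2\geq \|v^{p/2}\mathbf 1_{B_r}\|_2^2$) yields the claimed Caccioppoli-type inequality.
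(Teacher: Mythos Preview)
The paper does not supply its own proof of this proposition; it merely cites \cite[Proposition~1]{Ki_multi}. Your approach---testing the equation against $v^{p-1}\eta^p$, integrating the Laplacian by parts, absorbing the drift via the form-bound $b_n\in\mathbf F_\delta$ applied to $v^{p/2}\eta^{p/2}$, and then splitting the source term according to $\{|b_n|\le 1\}$ versus $\{|b_n|>1\}$---is the standard $L^p$ Caccioppoli argument and is exactly what one expects the cited proof to do. For $k\ge 0$ the argument you outline is correct, and the parameter bookkeeping (optimizing $\varepsilon_2=\sqrt\delta/p$, using the strictness of $p>\tfrac{2}{2-\sqrt\delta}$ to leave room for the small $\varepsilon_1,\varepsilon_3,\varepsilon'$ contributions) is accurate.

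There is, however, a genuine gap in your treatment of $k<0$. The sentence ``the general case being reduced by a shift, absorbing the extra $\mu k v^{p-1}$ term by Young's inequality'' does not close: the shift $\tilde u=u-k$ changes the right-hand side of the equation by $-\mu k$, and Young's inequality then produces a residual $C\mu|k|^p\,|B_R|$ which is \emph{not} dominated by either term on the stated right-hand side with $k$- and $\mu$-independent constants. In fact the inequality as written cannot hold for $k<0$ with $\mu$-independent $K_1$: take $f\equiv 0$, so $u\equiv 0$, and $k=-1$; then $v\equiv 1$, $\nabla v^{p/2}=0$, and the inequality reads $\mu|B_r|\le K_1(R-r)^{-2}|B_R|$, which fails for large $\mu$.

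This is not fatal for the paper's purposes: when the $\mu$-term is actually used (in Proposition~\ref{nonhom_prop}) the levels satisfy $k_m=\xi(1-2^{-m})\ge 0$, and when arbitrary $k\in\mathbb R$ is needed (Proposition~\ref{c_prop}) the $\mu$-term has been discarded and $\mu$ is fixed, so $\mu$-dependent constants are acceptable. You should therefore state and prove the inequality for $k\ge 0$ (which your argument does correctly), and note separately that without the $\mu$-term the same computation goes through for all $k\in\mathbb R$ once one simply drops $\mu\langle u v^{p-1}\eta^p\rangle$ onto the right and absorbs it using $|u|\le v+|k|\le 2v\vee 2|k|$ on $\{u>k\}$ together with $|k|\mathbf 1_{\{v>0\}}\le v$ when $k\ge 0$ and a fixed-$\mu$ constant when $k<0$.
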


Recall that we have fixed $1<\theta<\frac{d}{d-2}$.

\begin{proposition} [{\cite[Prop.\,5]{Ki_multi}}]
\label{nonhom_prop}
There exists constants $K$  and $\beta \in ]0,1[$ such that, for all $\mu  \geq 1$, 
\begin{equation}
\label{dg_ineq}
\sup_{B_{\frac{1}{2}}} u_+ \leq K \biggl( \langle u_+^{p\theta}\mathbf{1}_{B_1}\rangle^{\frac{1}{p\theta}} + \mu^{-\frac{\beta}{p}}\big\langle (\mathbf{1}_{|b_n|>1} + |b_n|^p\mathbf{1}_{|b_n| \leq 1})^{\theta'}|f|^{p\theta'}\mathbf{1}_{B_1}\big\rangle^{\frac{1}{p\theta'}} \biggr).  
\end{equation}
\end{proposition}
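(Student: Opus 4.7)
The plan is to adapt the De~Giorgi iteration of Proposition~\ref{prop71} to the non-homogeneous equation~\eqref{eq7}, starting from the sharper Caccioppoli-type inequality of Proposition~\ref{c_prop2}, which retains a term $\mu\|v^{p/2}\|_{2,B_r}^{2}$ on the left-hand side. This extra term will be the source of the factor $\mu^{-\beta}$ in~\eqref{dg_ineq}. Throughout, fix $1<\theta<d/(d-2)$, set $\theta'=\theta/(\theta-1)$ and $\mathsf B:=\mathbf 1_{|b_n|>1}+|b_n|^{p/2}\mathbf 1_{|b_n|\leq 1}$, and write $v:=(u-k)_+$, $A_k:=\{u>k\}$.

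First I would combine Proposition~\ref{c_prop2} with the Sobolev embedding applied to $\eta v^{p/2}$ (where $\eta$ is a smooth cutoff equal to $1$ on $B_r$, supported in $B_{(R+r)/2}$, with $|\nabla\eta|\leq 4/(R-r)$), and apply H\"older's inequality with exponents $\theta,\theta'$ to the forcing term. This yields, for $0<r<R\leq 1$,
\[
\mu\|v^{p/2}\|_{2,B_r}^{2}+\|v^{p/2}\|_{2d/(d-2),B_r}^{2}\leq \frac{C_1}{(R-r)^2}\|v^{p/2}\|_{2,B_R}^{2}+C_2\, T_1^{1/\theta'}|A_k\cap B_R|^{1/\theta},
\]
with $T_1:=\langle \mathsf B^{2\theta'}|f|^{p\theta'}\mathbf 1_{B_1}\rangle$, which is precisely the weighted $L^{p\theta'}$ quantity appearing on the right-hand side of~\eqref{dg_ineq}.

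Next I would extract the factor $\mu^{-(1-\gamma)}$ via the interpolation $\|w\|_{2\theta}^2\leq\|w\|_2^{2(1-\gamma)}\|w\|_{2d/(d-2)}^{2\gamma}$ with $\gamma=\tfrac{d}{2}(1-\tfrac{1}{\theta})\in(0,1)$ and Young's inequality, obtaining
\[
\psi(r,k):=\|v^{p/2}\|_{2\theta,B_r}^2\leq \mu^{-(1-\gamma)}\Big[\frac{C_1'}{(R-r)^2}\|v^{p/2}\|_{2,B_R}^{2}+C_2'\,T_1^{1/\theta'}|A_k|^{1/\theta}\Big].
\]
Combining with H\"older $\|v^{p/2}\|_{2,B_R}^{2}\leq |A_k|^{1-1/\theta}\psi(R,k)$ and the Chebyshev bound $|A_{k_{i+1}}\cap B_{r_i}|\leq (\xi 2^{-i-1})^{-p\theta}\psi_i^\theta$ for the standard choices $r_i=\tfrac{1}{2}(1+2^{-i})$, $k_i=\xi(1-2^{-i})$, $\psi_i:=\psi(r_i,k_i)$, one obtains an iteration fitting the framework of Lemma~\ref{dg_lemma}. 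Choosing
\[
\xi=K\|u_+\|_{p\theta,B_1}+K\mu^{-\beta}T_1^{1/(p\theta')}
\]
for appropriate $K$ and $\beta\in(0,1)$ makes the smallness hypothesis $\psi_0\leq N^{-1/\alpha}C_0^{-1/\alpha^2}$ hold; then $\psi_i\to 0$ and $\sup_{B_{1/2}}u_+\leq\xi$, which is exactly~\eqref{dg_ineq}.

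The main obstacle is the exponent bookkeeping: a na\"ive application of H\"older to the forcing term produces a contribution that is only linear in $\psi_i$, not of the form $\psi_i^{1+\alpha}$ required by Lemma~\ref{dg_lemma}. To fix this I would use the sharper inequality $\|v^{p/2}\|_{2\theta}^{2}\leq|A_k|^{\sigma}\|v^{p/2}\|_{2d/(d-2)}^{2}$ with $\sigma=\tfrac{1}{\theta}-\tfrac{d-2}{d}>0$ (genuinely positive because $\theta<d/(d-2)$) in place of the straight interpolation. This produces an iteration in which both exponents $1+2\theta/d$ and $2-\theta(d-2)/d$ are strictly greater than $1$, and it makes the correct value of $\beta\in(0,1)$ transparent after balancing the $\mu^{-(1-\gamma)}$ gain against the powers of $\xi$ forced by Lemma~\ref{dg_lemma}. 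The constraint $p>2/(2-\sqrt\delta)$ enters only through Proposition~\ref{c_prop2}, where it is used to absorb the drift term via the form-boundedness $b_n\in\mathbf F_\delta$.
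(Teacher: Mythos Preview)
Your overall strategy is right and matches the paper's: start from Proposition~\ref{c_prop2}, use Sobolev plus interpolation to extract a $\mu$-gain, then run a De~Giorgi iteration on $\psi_i=\|v_i^{p/2}\|_{2\theta,B_{r_i}}^2$ via Lemma~\ref{dg_lemma}. You also correctly diagnose the obstacle: interpolating \emph{directly} to the target exponent $2\theta$ leaves the forcing contribution $\mu^{-(1-\gamma)}T_1^{1/\theta'}|A_{k_{i+1}}|^{1/\theta}$ only linear in $\psi_i$ after Chebyshev, which breaks the hypothesis of Lemma~\ref{dg_lemma}.

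The gap is in your proposed fix. Replacing the interpolation by the H\"older bound $\|v^{p/2}\|_{2\theta}^2\leq |A_k|^{\sigma}\|v^{p/2}\|_{2d/(d-2)}^2$ does restore superlinearity (your exponents $1+2\theta/d$ and $2-\theta(d-2)/d$ are indeed both $>1$), but it throws away the $\mu$-term on the left of Proposition~\ref{c_prop2}: once you bound $\|v^{p/2}\|_{2d/(d-2)}^2$ by the Caccioppoli right-hand side there is no $\mu$ left anywhere, so the iteration yields $\sup_{B_{1/2}}u_+\leq C\|u_+\|_{p\theta,B_1}+C\,T_1^{1/(p\theta')}$ with no $\mu^{-\beta}$ in front of the second term. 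Your closing sentence about ``balancing the $\mu^{-(1-\gamma)}$ gain'' is then inconsistent with ``in place of the straight interpolation'': you cannot keep the former if you drop the latter.

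The paper's resolution is to insert an \emph{intermediate} exponent $\theta_0\in(\theta,\tfrac{d}{d-2})$. One interpolates between $L^p$ and $L^{pd/(d-2)}$ to $L^{p\theta_0}$ (this is where $\mu^{p\beta}$ appears, with $\beta$ determined by $\theta_0$), bounds the right-hand side by Proposition~\ref{c_prop2}, and \emph{then} applies H\"older from $p\theta_0$ down to $p\theta$:
\[
\|v\|_{L^{p\theta}(B_{m+1})}^{p\theta}\leq \|v\|_{L^{p\theta_0}(B_{m+1})}^{p\theta}\,\big|B_m\cap\{v>0\}\big|^{\,1-\theta/\theta_0}.
\]
Because this last factor multiplies the \emph{entire} right-hand side---in particular the forcing term $H^\theta|B_m\cap\{v>0\}|$---Chebyshev now produces the superlinear exponent $\alpha=1-\theta/\theta_0>0$ uniformly, while the $\mu^{-p\beta\theta}$ from the interpolation step survives and is absorbed into the requirement $\xi^p\geq \mu^{-p\beta\theta}H$. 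Your argument becomes correct once you route the interpolation through $\theta_0$ rather than $\theta$; the ``sharper H\"older'' step you propose is then exactly the passage $\theta_0\to\theta$, applied after (not instead of) the interpolation.
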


\begin{proof}
Proposition \ref{c_prop2} yields
\begin{align*}
\mu \|v^{\frac{p}{2}}\mathbf{1}_{B_{r}}\|_2^2 + \|v^{\frac{p}{2}}\|^2_{W^{1,2}(B_{r})} & \leq \tilde{K}_1(R-r)^{-2}\|v\|^{p}_{L^p(B_{R})} \\
&+ K_2\|\Theta^{\frac{1}{p}}f\mathbf{1}_{u>k}\|_{L^p(B_R)}^p, \qquad v=(u-k)_+,\;\;k \geq 0,
\end{align*}
where $\Theta:=\mathbf{1}_{|\mathsf{b_n}|>1} + |\mathsf{b_n}|^p\mathbf{1}_{|\mathsf{b_n}| \leq 1}$ and $\tilde{K}_1$, $K_2$ are generic constants.
By the Sobolev embedding theorem,
$$
\mu \|v\|_{L^p(B_r)}^p + \|v\|^{p}_{L^{\frac{pd}{d-2}}(B_{r})}\leq C_1(R-r)^{-2}\|v\|^{p}_{L^p(B_{R})}+C_2\|\Theta^{\frac{1}{p}}f\mathbf{1}_{u>k}\|_{L^p(B_R)}^p.
$$
Next, we estimate the left-hand side from below using interpolation inequality:
$$
\mu^{\beta} \|v\|_{L^q(B_r)}^p \leq \beta\mu \|v\|_{L^p(B_r)}^p + (1-\beta)\|v\|_{L^{\frac{pd}{d-2}}(B_r)}^p, \quad 0<\beta<1, \quad \frac{1}{q}=\beta\frac{1}{p}+(1-\beta)\frac{d-2}{pd}.
$$

Put $\theta_0:=\frac{q}{p}$, so $1<\theta_0<\frac{d}{d-2}$. We fix $\beta \in ]0,1[$ sufficiently small so that $\theta<\theta_0$. 

\medskip

Hence, taking into account that $q=p\theta_0$,
$$
\mu^{\beta} \|v\|_{L^{p\theta_0}(B_r)}^p \leq \tilde{C}_1(R-r)^{-2}\|v\|^{p}_{L^p(B_{R})}+\tilde{C}_2\|\Theta^{\frac{1}{p}}f\mathbf{1}_{u>k}\|_{L^p(B_R)}^p.
$$
Applying H\"{o}lder's inequality in the RHS, we obtain
\begin{equation}
\label{ineq_h_n}
\mu^{\beta} \|v\|_{L^{p\theta_0}(B_r)}^p \leq \tilde{C}_1(R-r)^{-2}|B_{R}|^{\frac{\theta-1}{\theta}}\|v\|^{p}_{L^{p\theta}(B_{R})}+\tilde{C}_2\|\Theta^{\frac{1}{p}}f\mathbf{1}_{u>k}\|_{L^p(B_R)}^p.
\end{equation}
Set $$R_m:=\frac{1}{2}+\frac{1}{2^{m+1}}, \quad m \geq 0,$$
so $B^m \equiv B_{R_m}$ is a decreasing sequence of balls converging to the ball of radius $\frac{1}{2}$. 
By \eqref{ineq_h_n},
\begin{align}
\mu^{ \beta} \|v\|_{L^{p\theta_0}(B^{m+1})}^p & \leq \hat{C}_1 2^{2m}
\|v\|_{L^{p\theta}(B^{m})}^{p}+\tilde{C}_2  \|\Theta^{\frac{1}{p}}f\mathbf{1}_{u>k}\|_{L^p(B^m)}^p \notag \\
& \leq \hat{C}_1 2^{2m}
\|v\|_{L^{p\theta}(B^{m})}^{p} + \tilde{C}_2 H|B^{m} \cap \{v>0\}|^{\frac{1}{\theta}},
\label{theta_d_d-2_n}
\end{align}
where
$$
H:=\langle \Theta^{\theta'}|f|^{p\theta'}\mathbf{1}_{B^0}\rangle^{\frac{1}{\theta'}} \quad (B^0=B_1, \text{\,i.e.\,ball of radius } 1)
$$

On the other hand, by H\"{o}lder's inequality,
$$
 \|v\|_{L^{p\theta}(B^{m+1})}^{p\theta} \leq  \|v\|_{L^{p\theta_0}(B^{m+1})}^{p\theta} \biggl(|B^{m} \cap \{v>0\}| \biggr)^{1-\frac{\theta}{\theta_0}}.
$$
Applying \eqref{theta_d_d-2_n} in the first multiple in the RHS, we obtain
$$
 \|v\|_{L^{p\theta}(B^{m+1})}^{p\theta} \leq \tilde{C}\mu^{-\beta\theta}\biggl( 2^{2\theta m}\|v\|_{L^{p\theta}(B^{m})}^{p\theta}+ H^{\theta}|B^{m} \cap \{v>0\}| \biggr) \biggl(|B^{m} \cap \{v>0\}| \biggr)^{1-\frac{\theta}{\theta_0}}.
$$
Now, put $v_m:=(u-k_m)_+$ where $k_m:=\xi(1-2^{-m}) \uparrow \xi,$ where constant $\xi>0$ will be chosen later.
Then, using $2^{2\theta m} \leq 2^{p\theta m}$ and dividing by $\xi^{p\theta}$,
\begin{align*}
&\frac{1}{\xi^{p\theta}}  \|v_{m+1}\|_{L^{p\theta}(B^{m+1})}^{p\theta} \\
&\leq \tilde{C}\mu^{-\beta\theta}\biggl( \frac{2^{p\theta m}}{\xi^{p\theta}}\|v_{m+1}\|_{L^{p\theta}(B^{m})}^{p\theta}  + \frac{1}{\xi^{p\theta}} H^{\theta}|B^{m} \cap \{u>k_{m+1}\}|\biggr) \biggl(|B^{m} \cap \{u>k_{m+1}\}| \biggr)^{1-\frac{\theta}{\theta_0}}.
\end{align*}
Using that $\mu \geq 1$, we further obtain
\begin{align*}
&\frac{1}{\xi^{p\theta}}  \|v_{m+1}\|_{L^{p\theta}(B^{m+1})}^{p\theta} \\
&\leq \tilde{C}\biggl( \frac{2^{p\theta m}}{\xi^{p\theta}}\|v_{m+1}\|_{L^{p\theta}(B^{m})}^{p\theta}  + \frac{1}{\xi^{p\theta}} \mu^{-\beta\theta} H^{\theta}|B^{m} \cap \{u>k_{m+1}\}|\biggr) \biggl(|B^{m} \cap \{u>k_{m+1}\}| \biggr)^{1-\frac{\theta}{\theta_0}}.
\end{align*}
From now on, we require that constant $\xi$ satisfies $\xi^p \geq \mu^{-\beta}H$, so
\begin{align}
\label{dg_ineq7}
&\frac{1}{\xi^{p\theta}}  \|v_{m+1}\|_{L^{p\theta}(B^{m+1})}^{p\theta} \\
&\leq \tilde{C}\biggl( \frac{2^{p\theta m}}{\xi^{p\theta}}\|v_{m+1}\|_{L^{p\theta}(B^{m})}^{p\theta}  + |B^{m} \cap \{u>k_{m+1}\}|\biggr) \biggl(|B^{m} \cap \{u>k_{m+1}\}| \biggr)^{1-\frac{\theta}{\theta_0}}. \notag
\end{align}
Now, 
\begin{align*}
|B^{m} \cap \{u>k_{m+1}\}| & = \big|B^{m} \cap \big\{(\frac{u-k_m}{k_{m+1}-k_m})^{2\theta}>1\big\}\big| \\
&\leq (k_{m+1}-k_m)^{-p\theta} \langle v^{p\theta}_m \mathbf{1}_{B^{m}} \rangle  = \xi^{-p\theta}2^{p\theta(m+1)} \|v_m\|_{L^{p\theta}(B^{m})}^{p\theta},
\end{align*}
so using in \eqref{dg_ineq7} $\|v_{m+1}\|_{L^{p\theta}(B^{m})} \leq \|v_{m}\|_{L^{p\theta}(B^{m})}$ and applying the previous inequality, we obtain
$$
\frac{1}{\xi^{p\theta}} \|v_{m+1}\|_{L^{p\theta}(B^{m+1})}^{p\theta} \leq \tilde{C}  2^{p\theta m(2-\frac{\theta}{\theta_0})} \biggl(\frac{1}{\xi^{p\theta}}\|v_m\|_{L^{p\theta}(B^{m})}^{p\theta}\biggr)^{2-\frac{\theta}{\theta_0}}.
$$
Denote
$
z_m:=\frac{1}{\xi^{p\theta}}\|v_m\|_{L^{p\theta}(B^{m})}^{p\theta}.
$
Then
$$
z_{m+1} \leq \tilde{C}\gamma^m z_m^{1+\alpha}, \quad m=0,1,2,\dots, \quad \alpha:=1-\frac{\theta}{\theta_0},\;\; \gamma:=2^{p\theta (2-\frac{\theta}{\theta_0})}
$$
and $z_0 = \frac{1}{\xi^{p\theta}}\langle u_+^{p\theta}\mathbf{1}_{B^0} \rangle \leq \tilde{C}^{-\frac{1}{\alpha}}\gamma^{-\frac{1}{\alpha^2}}$ (recall: $B^0:=B_{R_0} \equiv B_1$)
provided that we fix $c$ by $$\xi^{p\theta}:=\tilde{C}^{\frac{1}{\alpha}}\gamma^{\frac{1}{\alpha^2}}\langle u_+^{p\theta}\mathbf{1}_{B^0}\rangle + \mu^{-\beta\theta}H^\theta.$$
Hence, by Lemma \ref{dg_lemma}, $z_m \rightarrow 0$ as $m \rightarrow \infty$. It follows that
$
\sup_{B_{1/2}}u_+ \leq \xi,
$
and the claimed inequality follows. 
\end{proof}

To end the proof of Theorem \ref{a2}, we need to estimate $\langle u_+^{p\theta}\mathbf{1}_{B_1}\rangle^{1/p\theta}$ in the RHS of \eqref{dg_ineq} in terms of  $f$. We will do it by estimating $\langle u_+^{p\theta}\rho\rangle^{1/p\theta}$
and then applying inequality $\rho \geq c\mathbf{1}_{B_1}$ for appropriate constant $c=c_d$.

\begin{proposition}
\label{rho_prop}
There exist generic constants $C$, $k$ and $\mu_0>0$ such that for all $\mu> \mu_0$,
\begin{align}
(\mu-\mu_0) \langle u^p \rho \rangle + \langle |\nabla u^{\frac{p}{2}}|^2 \rho \rangle \leq  C  \big\langle \big(\mathbf{1}_{|b_n|>1} + |b_n|^p\mathbf{1}_{|b_n| \leq 1}\big)|f|^p\rho \big\rangle. \label{c_ineq_rho} 
\end{align}
\end{proposition}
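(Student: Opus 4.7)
The plan is to multiply the equation $(\mu-\Delta+b_n\cdot\nabla)u=|b_n|f$ by $u_+^{p-1}\rho$ (for brevity I still write $u$ in place of $u_+$), integrate by parts, and invoke the form-boundedness of $b_n$ applied to the weighted test function $\psi:=u^{p/2}\sqrt{\rho}$. The driving identity is
\[
-\langle\Delta u,u^{p-1}\rho\rangle=\tfrac{4(p-1)}{p^2}\langle|\nabla u^{p/2}|^2\rho\rangle-\tfrac{1}{p}\langle u^p,\Delta\rho\rangle,
\]
so a positive dispersion term is secured on the left, while the weight only contributes a correction governed by $|\Delta\rho|/\rho$. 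A direct computation for $\rho(y)=(1+\kappa|y|^2)^{-d/2-1}$ yields the pointwise bounds $|\Delta\rho|/\rho\leq C_1\kappa$ and $|\nabla\rho|^2/\rho^2\leq C_2\kappa$, so that choosing $\kappa$ small renders every weight error that will be produced below a negligible multiple of $\langle u^p\rho\rangle$.

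For the drift term $\tfrac{2}{p}\langle b_n\psi,\sqrt{\rho}\,\nabla u^{p/2}\rangle$, I apply the quadratic inequality with parameter $\alpha>0$ to get $\tfrac{1}{p\alpha}\langle|\nabla u^{p/2}|^2\rho\rangle+\tfrac{\alpha}{p}\|b_n\psi\|_2^2$. Form-boundedness gives $\|b_n\psi\|_2^2\leq\delta\|\nabla\psi\|_2^2+c(\delta)\|\psi\|_2^2$, and expanding $\nabla\psi=\sqrt{\rho}\,\nabla u^{p/2}+u^{p/2}\nabla\sqrt{\rho}$ produces $\langle|\nabla u^{p/2}|^2\rho\rangle$, a cross term that integrates by parts to a multiple of $-\langle u^p,\Delta\rho\rangle$, and a weight term $\langle u^p,|\nabla\rho|^2/(4\rho)\rangle$; the latter two are absorbable into $\kappa\cdot\mathrm{const}\cdot\langle u^p\rho\rangle$. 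Optimizing $\alpha=1/\sqrt{\delta}$ produces net coefficient $\tfrac{2\sqrt{\delta}}{p}$ on $\langle|\nabla u^{p/2}|^2\rho\rangle$ from the drift side, so the surviving dispersion coefficient is $\tfrac{4(p-1)}{p^2}-\tfrac{2\sqrt{\delta}}{p}$, strictly positive precisely under the standing hypothesis $p>\tfrac{2}{2-\sqrt{\delta}}$.

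To handle the source $\langle|b_n||f|u^{p-1}\rho\rangle$ and produce the specific weight $\Theta=\mathbf 1_{|b_n|>1}+|b_n|^p\mathbf 1_{|b_n|\leq 1}$, split the integration region by whether $|b_n|\leq 1$ or $|b_n|>1$. On $\{|b_n|\leq 1\}$, Young's inequality with conjugate exponents $(p,\tfrac{p}{p-1})$ directly yields $|b_n|^p\mathbf 1_{|b_n|\leq 1}|f|^p$ plus a small multiple of $u^p$. On $\{|b_n|>1\}$, factor $|b_nf|u^{p-1}=(|b_n|u^{p/2}\sqrt{\rho})(|f|u^{p/2-1}\sqrt{\rho})$, apply Cauchy-Schwarz, and bound the first factor by form-boundedness of the truncated field $b_n\mathbf 1_{|b_n|>1}$ (whose form-bound is inherited from that of $b_n$); the second factor $\langle\mathbf 1_{|b_n|>1}f^2u^{p-2}\rho\rangle$ is then split by Young with exponents $(\tfrac{p}{2},\tfrac{p}{p-2})$ into $\mathbf 1_{|b_n|>1}|f|^p\rho$ plus a small multiple of $u^p\rho$.

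The main obstacle is the final bookkeeping: one must keep the aggregate coefficient of $\langle|\nabla u^{p/2}|^2\rho\rangle$ on the RHS — with contributions from the drift treatment and from the form-bound applied to the source on $\{|b_n|>1\}$ — strictly below $\tfrac{4(p-1)}{p^2}$, while all collected multiples of $\langle u^p\rho\rangle$ (from weight errors, the $c(\delta)$ terms, and the Young splittings) are absorbed into $\mu\langle u^p\rho\rangle$ by requiring $\mu>\mu_0$ for a sufficiently large $\mu_0$. Taking $\kappa$ small enough and tuning the Young parameters appropriately makes this balancing go through and yields the claimed inequality with constants $C$, $k$, $\mu_0$ depending only on $p$, $d$, $\delta$, $c(\delta)$.
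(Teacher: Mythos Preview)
Your proposal is correct and follows essentially the same approach as the paper: multiply the equation by $u|u|^{p-2}\rho$, integrate by parts, handle the drift via the quadratic inequality and form-boundedness applied to $u^{p/2}\sqrt{\rho}$, and absorb all weight errors using $|\nabla\rho|\le(\tfrac{d}{2}+1)\sqrt{\kappa}\,\rho$ with $\kappa$ small (the paper's proof is in fact only a brief sketch to this effect, deferring details to \cite{Ki_multi}). Your treatment of the source term---splitting on $\{|b_n|\le 1\}$ versus $\{|b_n|>1\}$, and on the latter set using Cauchy--Schwarz with a small parameter followed by form-boundedness of $b_n\mathbf 1_{|b_n|>1}$---is exactly what is needed to produce the weight $\Theta$ and is more detailed than what the paper records.
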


\begin{proof}The proof is standard, i.e.\,we multiply equation \eqref{eq7} by $u|u|^{p-2}$, integrate and then use apply to the drift term quadratic inequality and the form-boundedness condition.
In the term that contain $\nabla \rho$ we apply inequality $|\nabla \rho| \leq (\frac{d}{2}+1)\sqrt{\kappa} \rho$ with $\kappa$ chosen sufficiently small; since our assumption on $\delta$ is a strict inequality $\delta<4$, this choice of $\kappa$ suffices to get rid of the terms containing $\nabla \rho$. 
 The details can be found e.g.\,in \cite{Ki_multi}.
\end{proof}

\subsubsection*{Proof of Theorem \ref{a2}, completed} By Proposition \ref{nonhom_prop}, for all $\mu \geq 1$,
$$
\sup_{y \in B_{\frac{1}{2}}(x)} |u(y)| \leq K \biggl( \langle |u|^{p\theta}\rho_x\rangle^{\frac{1}{p\theta}} + \mu^{-\frac{\beta}{p}}\big\langle \big(\mathbf{1}_{|b_n|>1} + |b_n|^{p\theta'}\mathbf{1}_{|b_n| \leq 1}\big)|f|^{p\theta'}\rho_x\big\rangle^{\frac{1}{p\theta'}} \biggr),
$$
where $\rho_x(y):=\rho(y-x)$, and constant $C$ is generic.
Applying Proposition \ref{rho_prop} to the first term in the RHS (with $p\theta$ instead of $p$), we obtain for all $\mu \geq \mu_0 \vee 1$
\begin{align*}
\sup_{y \in B_{\frac{1}{2}}(x)} |u(y)| \leq C  \biggl( & (\mu-\mu_0)^{-\frac{1}{p\theta}}\big\langle \big(\mathbf{1}_{|b_n|>1} + |b_n|^{p\theta}\mathbf{1}_{|b_n| \leq 1}\big) |f|^{p\theta}\rho_x\big\rangle^{\frac{1}{p\theta}} \\
&+ \mu^{-\frac{\beta}{p}}\big\langle \big(\mathbf{1}_{|b_n|>1} + |b_n|^{p\theta'}\mathbf{1}_{|b_n| \leq 1}\big)|f|^{p\theta'}\rho_x\big\rangle^{\frac{1}{p\theta'}}
\end{align*}
This ends the proof of Theorem \ref{a2}. \hfill \qed

\bigskip

Following the proof of Theorem \ref{a2}, we obtain

\begin{corollary}
\label{a1_cor}
In the assumptions and the notations of Theorem \ref{a2}, if $u=u_n$ solves on $\mathbb R^d$
$(\mu+\Lambda_n)u=f$,
then, for every $x \in \mathbb R^d$,
\begin{align}
\label{emb2}
\sup_{B_\frac{1}{2}(x)}|u|  \leq K \biggl((\mu-\mu_1)^{-\frac{1}{p\theta}} \langle |f|^{p\theta}\rho_x\rangle^{\frac{1}{p\theta}} \notag  + \mu^{-\frac{\beta}{p}}\langle |f|^{p\theta'}\rho_x\rangle^{\frac{1}{p\theta'}} \biggr).
\end{align}
where $K$ does not depend on $x \in \mathbb R^d$ or $n$. 
\end{corollary}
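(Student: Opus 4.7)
The plan is to run the proof of Theorem \ref{a2} verbatim, noting that every appearance of the weight $\Theta = \mathbf{1}_{|b_n|>1} + |b_n|^p \mathbf{1}_{|b_n| \leq 1}$ arose solely from the factor $|b_n|$ on the right-hand side of the equation $(\mu+\Lambda_n)u = |b_n|f$ in Theorem \ref{a2}. Since the present equation is $(\mu+\Lambda_\varepsilon)u=f$, with no $|b_n|$ on the RHS, the corresponding weight simply becomes $\mathbf{1}$, and the argument goes through with $|f|^{p\theta}$ and $|f|^{p\theta'}$ in place of the weighted quantities $\Theta^\theta|f|^{p\theta}$, $\Theta^{\theta'}|f|^{p\theta'}$.

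Concretely, I would first revisit Proposition \ref{c_prop2}. Testing the equation against $(u-k)_+^{p-1}$ with the cutoff, the only change is that in the term $\langle f\,, (u-k)_+^{p-1}\chi\rangle$ arising from $(\mu+\Lambda_\varepsilon)u=f$ there is no $|b_n|$ factor to absorb, so one just applies Young's inequality in the standard way and obtains the same Caccioppoli-type inequality but with $\|f^{p/2}\mathbf{1}_{u>k}\mathbf{1}_{B_R}\|_2^2$ on the right. Next, repeating the De Giorgi iteration of Proposition \ref{nonhom_prop} with this cleaner right-hand side produces
\[
\sup_{B_{1/2}}u_+ \leq K\biggl(\langle u_+^{p\theta}\mathbf{1}_{B_1}\rangle^{\frac{1}{p\theta}} + \mu^{-\beta}\langle |f|^{p\theta'}\mathbf{1}_{B_1}\rangle^{\frac{1}{p\theta'}}\biggr),
\]
and by translation, the analogous bound holds on $B_{1/2}(x)$ with the cutoffs replaced by $\mathbf{1}_{B_1(x)}$, which in turn are dominated by $c_d^{-1}\rho_x$.

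To estimate $\langle |u|^{p\theta}\rho_x\rangle^{1/(p\theta)}$ globally, I would redo Proposition \ref{rho_prop} with the new right-hand side: multiply $(\mu + \Lambda_\varepsilon)u = f$ by $u|u|^{p\theta - 2}\rho_x$, integrate over $\mathbb{R}^d$, use the pointwise bound $|\nabla\rho_x|\leq (\tfrac{d}{2}+1)\sqrt{\kappa}\,\rho_x$ with $\kappa$ small (justified since $\delta<4$ is strict, so there is room for the form-boundedness application after Young), apply form-boundedness of $b$, and apply Young on the $\langle f, u|u|^{p\theta-2}\rho_x\rangle$ term. This yields
\[
(\mu - \mu_0)\langle |u|^{p\theta}\rho_x\rangle + \langle |\nabla u^{p\theta/2}|^2 \rho_x\rangle \leq C\langle |f|^{p\theta}\rho_x\rangle,
\]
with constants independent of $\varepsilon$ and $x$. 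Substituting this into the previous display and taking the supremum over $x\in \tfrac{1}{2}\mathbb{Z}^d$ (or, equivalently, over $x\in\mathbb{R}^d$ using translation invariance of the estimate) produces the claimed inequality with $\mu_1 := \mu_0$.

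The only mild technical point—really the ``main obstacle''—is ensuring that the constants $\mu_1$, $\beta$, $K$ and the exponent $\theta$ can be chosen to match those of Theorem \ref{a2}; but this is immediate from the observation that the iteration scheme and the weighted energy estimate depend only on $\delta$, $c(\delta)$, $p$, $\theta$ and the dimension, and not on the particular form of the right-hand side, beyond the fact that $f\in L^{p\theta}\cap L^{p\theta'}$ locally against $\rho_x$. Independence of $\varepsilon$ follows because $b_n \in \mathbf{F}_\delta$ with the same $\delta$ and $c(\delta)$ as $b$, so every constant in the chain of estimates is controlled by $\delta$ and $c(\delta)$ alone.
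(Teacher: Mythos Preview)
Your proposal is correct and is exactly the approach the paper takes: the paper states the corollary with the single line ``Following the proof of Theorem \ref{a2}, we obtain'' and nothing more. You have simply written out in detail what that sentence means---replacing $|b_n|f$ by $f$ so that the weight $\Theta$ collapses to $\mathbf{1}$, and then re-running Propositions \ref{c_prop2}, \ref{nonhom_prop}, and \ref{rho_prop} with this simplification---so there is nothing to add.
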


\end{document}